\def\ifSIAM{\iffalse}
\definecolor{darkorange}{rgb}{0.6 0.25 0}
\newcommand\doi[1]{\href{http://dx.doi.org/#1}{doi: \nolinkurl{#1}}}
\definecolor{darkgreen}{rgb}{0,0.5,0}
\titleformat{\section}{\normalfont\Large\bfseries}{\thesection}{1em}{}
\titleformat{\subsection}[runin]{\normalfont\bfseries}{\thesubsection.}{0.5em}{}[.]
\titleformat{\subsubsection}[runin]{\normalfont\bfseries}{\thesubsubsection.}{0.5em}{}[.]
\titleformat{\paragraph}[runin]{\normalfont\bfseries}{}{0.5em}{}[.]
\numberwithin{table}{section}    % for Table 1.1
\numberwithin{figure}{section}   % for Figure 1.1
\numberwithin{equation}{section} % for equation (1.1)
\renewcommand\theenumi{(\arabic{enumi})}
\renewcommand\labelenumi\theenumi
\renewcommand{\Uad}{U_\textup{ad}}
\newcommand{\hatUad}{\hat U_\textup{ad}}
\newtheorem{theorem}{Theorem}[section]
\newtheorem{lemma}[theorem]{Lemma}
\newtheorem{corollary}[theorem]{Corollary}
\newtheorem{assumption}[theorem]{Assumption}
\newtheorem{remark}[theorem]{Remark}
\definecolor{theoremframecolor}{rgb}{1.0,0.0,0.0}%
\definecolor{assumptionframecolor}{rgb}{0.0,1.0,0.0}%
\definecolor{lemmaframecolor}{rgb}{0.0,0.0,1.0}%
\definecolor{corollaryframecolor}{rgb}{0.0,0.0,1.0}%
\definecolor{propositionframecolor}{rgb}{0.0,0.0,1.0}%
\definecolor{remarkframecolor}{rgb}{1.0,0.5,0.0}%
\definecolor{proofframecolor}{rgb}{0.5,0.5,0.5}%
\begin{document}
%%fakesection: Title and abstract
\ifSIAM
\title{Optimal Control of Quasistatic Plasticity with Linear Kinematic Hardening\\Part I: Existence and Discretization in Time}
\author{Gerd Wachsmuth}
\else
\title[Optimal Control of Quasistatic Plasticity, \today]{Optimal Control of Quasistatic Plasticity with Linear Kinematic Hardening\\Part I: Existence and Discretization in Time}
\author[Gerd Wachsmuth, \protect\today]{Gerd Wachsmuth}
\address{Chemnitz University of Technology, Faculty of Mathematics, D--09107 Chemnitz, Germany}
\email{gerd.wachsmuth@mathematik.tu-chemnitz.de}
\urladdr{http://www.tu-chemnitz.de/mathematik/part\_dgl/people/wachsmuth}
\fi

\maketitle

\begin{abstract}
	In this paper we consider an optimal control problem governed by a time-dependent variational inequality arising in quasistatic plasticity with linear kinematic hardening.
	We address certain continuity properties of the forward operator, which imply the existence of an optimal control.
	Moreover, a discretization in time is derived and we show that every local minimizer of the continuous problem
	can be approximated by minimizers of modified, time-discrete problems.
\end{abstract}

%%fakesection: show me the date!
% \def\leftmark{\today}

%%fakesection: toc
% \tableofcontents
\section{Introduction}
%%fakesubsection: Introduction
\label{sec:intro}
The optimization of elastoplastic systems is of significant importance for industrial deformation processes, e.g., for the control of the springback of deep-drawn metal sheets.
In this paper we consider an optimal control problem for the quasistatic problem of small-strain elastoplasticity with linear kinematic hardening.
The strong formulation of the forward system (in the stress based, so-called dual formulation) reads (cf.\ \cite[Chapters~2, 3]{HanReddy1999})
\begin{equation}
	\label{eq:Forward_problem_strong_form}
	\left.
		\begin{aligned}
			\C^{-1} \dot\bsigma - \bvarepsilon(\dot\bu) + \lambda \, (\bsigma^D + \bchi^D) & = \bnull & & \mrep{\text{in }}{\text{on }} (0,T)\times\Omega, \\
			\H^{-1} \dot\bchi \phantom{{}-\bvarepsilon(\dot\bu)} + \lambda \, (\bsigma^D + \bchi^D) & = \bnull & & \mrep{\text{in }}{\text{on }} (0,T)\times\Omega, \\
			\div \bsigma & = -\bf & & \mrep{\text{in }}{\text{on }} (0,T)\times\Omega, \\
			\begin{aligned}
				\smash{\text{with complem.\ conditions}} && \; 0 \le \lambda \; \perp \; \phi(\bSigma) \\
				\smash{\text{and boundary conditions}} && \;\bu 
			\end{aligned}
			& \hspace*{-0.5mm}
			\begin{aligned}
				& \le 0 \\
				& = \bnull 
			\end{aligned}
			& & 
			\mspace{-3mu}
			\begin{aligned}
				& \mrep{\text{in }}{\text{on }} (0,T)\times\Omega, \\
				& \text{on } (0,T)\times\Gamma_D, 
			\end{aligned} \\
			\bsigma \cdot \bn & = \bg & & \text{on } (0,T)\times\Gamma_N.
		\end{aligned}
	\quad \right\}
\end{equation}
The system \eqref{eq:Forward_problem_strong_form} is subject to the initial condition $(\bsigma(0),\bchi(0),\bu(0)) = \bnull$.
The state variables consist of the stress $\bsigma$ and back stress $\bchi$, whose values are symmetric matrices.
Both matrix functions are combined into the generalized stress $\bSigma = (\bsigma,\bchi)$.
The state variables also comprise the vector-valued displacement $\bu$ and the scalar-valued plastic multiplier $\lambda$ associated with the yield condition $\phi(\bSigma) \le 0$ of von Mises type, see \eqref{eq:Yield_function}.
The first two equations in \eqref{eq:Forward_problem_strong_form}, together with the complementarity conditions, represent the material law of quasistatic plasticity.
The tensors $\C^{-1}$ and $\H^{-1}$ are the inverses of the elasticity tensor and the hardening modulus, respectively, and $\bsigma^D$ denotes the deviatoric part of $\bsigma$, see \eqref{eq:deviator}.
As usual, $\bvarepsilon(\bv) = \big(\nabla \bv + (\nabla \bv)^\top\big)/2$ denotes the (linearized) strain, where $\nabla\bv$ is the gradient of $\bv$.
The third equation in \eqref{eq:Forward_problem_strong_form} is the equilibrium of forces.
By $\div \bsigma$ we denote the (row-wise) divergence of the matrix-valued function $\bsigma$.
The boundary conditions correspond to clamping on $\Gamma_D \subset \partial\Omega$ and the prescription of boundary loads $\bg$ on the remainder $\Gamma_N = \partial\Omega \setminus \Gamma_D$, whose outer unit normal vector is denoted by $\bn$.
We will see in \autoref{subsec:stress_based_formulation} that \eqref{eq:Forward_problem_strong_form} can be reformulated as a mixed, time-dependent and rate-independent variational inequality of the first kind.

The boundary loads $\bg$ act as control variables.
There would be no difficulty in admitting volume forces $\bf$ as additional control variables but for practical reasons and simplicity of the presentation, $\bf$ is assumed to be zero throughout.
The optimal control problem under consideration reads
\begin{equation}
	\label{eq:upper_level_in_intro}
	\tag{$\mathbf{P}$}
	\left.
		\begin{aligned}
			\text{Minimize} \quad & \psi(\bu) + \frac{\nu}{2} \norm{\bg}_{H^1(0,T;L^2(\Gamma_N;\R^d))}^2 \\
			\text{such that} \quad & (\bSigma, \bu, \lambda) \text{ solves the quasistatic plasticity problem~\eqref{eq:Forward_problem_strong_form}}, \\
			\text{and} \quad & \bg \in \Uad.
		\end{aligned}
	\quad\right\}
\end{equation}
Here, $\psi$ is a functional favoring certain displacements $\bu$.
A typical example would be to track the final deformation after unloading.
% $\psi(\bu) = \norm{\bu(T) - \bu_d}^2$.
% This objective expresses the goal of reaching as closely as possible a desired \emph{final} deformation $\bu_d$.
The set $\Uad$ realizes constraints on the control $\bg$.
The assumptions on $\psi$ and $\Uad$ are given in \autoref{asm:psi_lsc}, followed by a number of examples.

%%fakesubsection: Main contribution of the first part.
The aim of this paper and of the subsequent works \cite{Wachsmuth2011:2,Wachsmuth2011:4} is the derivation of first order necessary optimality conditions for \eqref{eq:upper_level_in_intro}.
Let us highlight the main contributions of this paper.
\begin{enumerate}
	\item
		We show that the derivative $(\dot\bSigma,\dot\bu)$ of the solution depends continuously
		on the derivative $\dot\bg$ of the right-hand side, see \autoref{cor:continuity_GG_in_H1}.
		This continuity result of the solution operator of (the weak formulation of) \eqref{eq:Forward_problem_strong_form}
		is novel compared to the stability results known from the literature.

	\item
		Since the solution map of the system \eqref{eq:Forward_problem_strong_form} is nonlinear,
		its weak continuity is not obvious.
		We prove this weak continuity in \autoref{thm:weak_continuity_solution_operator}.
		An immediate consequence is the existence of optimal controls of \eqref{eq:upper_level_in_intro}, see \autoref{thm:existence_continuous}.

	\item 
		We show the convergence of the solutions of a time-discrete variant of \eqref{eq:Forward_problem_strong_form} w.r.t.\ the time-step size.
		In particular, we prove a rate of convergence of the generalized stresses $\bSigma$ in $L^\infty(0,T)$ \emph{without} assuming additional regularity, see \autoref{lem:sigma_tau_to_sigma_in_L_infty}.
		Moreover, the convergence of the plastic multiplier $\lambda$ is shown in \autoref{thm:strong_convergence_lambda}.
		These findings improve the results known from the literature.

	\item 
		We study the approximation of local minima of \eqref{eq:upper_level_in_intro} by local minima of its time-discrete variant, see \autoref{subsec:approx_by_time_discrete}.
		The results of this section are interesting in their own right and they are also used
		in order to derive necessary optimality conditions of \eqref{eq:upper_level_in_intro},
		see \citeautoref{Wachsmuth2011:4:sec:weak_stationarity_quasistatic}.
		Moreover, these approximation properties are important for a numerical realization of the optimal control problem.
\end{enumerate}

%%fakesubsection: References
% We give some references for optimal control of rate-independent systems.

There are only few references concerning the optimal control of rate-independent systems, see \cite{Rindler2008,Rindler2009,KocvaraOutrata2005,KocvaraKruzikOutrata2006,Brokate1987}.
%=====================================================================================================================
In the case of an infinite dimensional state space (i.e.\ $W^{1,p}(0,T;X)$, $\dim X = \infty$),
there are no contributions providing optimality conditions for rate-independent optimal control problems.
%=====================================================================================================================
\cite{Rindler2008,Rindler2009} study the optimal control of rate-independent evolution processes in a general setting.
% The state variable is given by an \emph{energetic solution} of a rate-independent evolution process.
The existence of an optimal control and the approximability by solutions of discretized problems is shown.
The system of quasistatic plasticity in its \emph{primal formulation} (see \cite[Section~7]{HanReddy1999}) is contained as a special case.
However, in contrast to our analysis, the boundedness of the control in $W^{1,\infty}(0,T;U)$ (cf.\ \cite[Assumption~(U)]{Rindler2008}) is required and no optimality conditions are proven.

A finite dimensional situation (i.e.\ with state space $W^{1,p}(0,T; \R^n)$) is considered in 
\cite{KocvaraOutrata2005,KocvaraKruzikOutrata2006}, who consider spatially discretized problems,
and
\cite{Brokate1987}, who deals with optimal control of an ODE involving a rate-independent part.

The \emph{static} (i.e.\ time-independent) version of the optimal control problem \eqref{eq:upper_level_in_intro} was considered in \cite{HerzogMeyerWachsmuth2009:2,HerzogMeyerWachsmuth2010:2}.
For locally optimal controls, optimality systems of B- and C-stationarity type were obtained.

% As usual,
% we refer to the system \eqref{eq:Forward_problem_strong_form} and equivalent reformulations as the \emph{forward} or \emph{lower-level problem}
% and to \eqref{eq:upper_level_in_intro} as the \emph{control} or \emph{upper-level problem}.

%%fakesubsection: Outline of the paper
Let us sketch the outline of this paper.
In the remainder of this section, we introduce the notation and fix the functional analytic framework.
Moreover, we state the weak formulation of \eqref{eq:Forward_problem_strong_form} and give some references concerning its analysis.
\autoref{sec:continuous} is devoted to the continuous optimal control problem \eqref{eq:upper_level_in_intro}.
We give a brief introduction to evolution variational inequalities (EVIs).
Restating \eqref{eq:Forward_problem_strong_form} as an EVI, we are able to prove some new continuity properties of its solution operator.
Moreover, we show the weak continuity of the control-to-state map of \eqref{eq:upper_level_in_intro}.

In \autoref{sec:time_discretization} we consider a discretization in time of the control problem \eqref{eq:upper_level_in_intro}.
We show two convergence properties of the solution of the time-discrete forward problems, similar to the strong and weak continuity of the solution map of \eqref{eq:Forward_problem_strong_form}.
Finally, these results are used in \autoref{subsec:approx_by_time_discrete}
to prove
that \emph{every} local minimizer $\bg$ of the control problem \eqref{eq:upper_level_in_intro} can be approximated
by local minimizers of a slightly modified time-discrete problem \hyperref[eq:time-discrete_ulp_mod]{\textup{($\mathbf{P}_{\bg}^{\tau}$)}}, see \autoref{thm:continuous_approximation_with_local_solutions_mod}.

This paper can be understood as a prerequisite for the analysis contained in \cite{Wachsmuth2011:2,Wachsmuth2011:4}.
Since the problems under consideration and their analysis are non-standard,
the results presented here are believed to be of independent interest.

We give a brief overview over \cite{Wachsmuth2011:2,Wachsmuth2011:4}.
We regularize the time-discrete forward problem and show the Fréchet differentiability of the associated solution map.
This result requires some subtle arguments.
The regularized time-discrete optimal control problems are differentiable
and consequently, optimality conditions can be derived in a straightforward way.
The passage to the limit in the regularization parameter $\varepsilon$ leads to an optimality system of C-stationary type for the time-discrete problem.
Finally, we pass to the limit with respect to the discretization in time.
This part also requires new convergence arguments.
Due to the weak mode of convergence of the adjoint variables, the sign condition for the multipliers is lost in the limit and we finally obtain a system of weak stationarity for the optimal control of \eqref{eq:Forward_problem_strong_form}.
% Certain difficulties are overcome and we obtain a system of weak stationarity for the optimal control of \eqref{eq:VI_lower_level_introduction}.

\subsection{Notation and assumptions}
Our notation follows \cite{HanReddy1999} and \cite{HerzogMeyerWachsmuth2009:2}.

\subsubsection*{Function spaces}
Let $\Omega \subset \R^d$ be a bounded Lipschitz domain with boundary $\Gamma = \partial\Omega$ in dimension $d = 3$.
The boundary consists of two disjoint parts $\Gamma_N$ and $\Gamma_D$.   
We point out that the presented analysis is not restricted to the case $d = 3$, but for 
reasons of physical interpretation we focus on the three dimensional case.
In dimension $d = 2$, the interpretation of \eqref{eq:Forward_problem_strong_form} has to be slightly modified,
depending on whether one considers the plane strain or plane stress formulation.

We denote by $\S := \R^{d \times d}_\textup{sym}$ the space of symmetric $d$-by-$d$ matrices, endowed with the inner product $\bsigma \dprod \btau = \sum_{i,j=1}^d \sigma_{ij} \tau_{ij}$, and we define
\begin{align*}
	V &= H^1_D(\Omega;\R^d) = \{\bu \in H^1(\Omega;\R^d): \bu = \bnull \text{ on } \Gamma_D \},
	&
	S &= L^2(\Omega;\S)
\end{align*}
as the spaces for the displacement $\bu$, stress $\bsigma$, and back stress $\bchi$, respectively.
The control $\bg$ belongs to the space
\begin{equation*}
	U = L^2(\Gamma_N; \R^d).
\end{equation*}
The control operator $E : U \to V'$, $\bg \mapsto \ell$, which maps boundary forces (i.e.\ controls) $\bg \in U$ to functionals (i.e.\ right-hand sides of the weak formulation of \eqref{eq:Forward_problem_strong_form}, see \autoref{subsec:stress_based_formulation}) $\ell \in V'$ is given by
\begin{equation}
	\label{eq:def_E}
	\dual{\bv}{E\bg}_{V,V'} := -\int_{\Gamma_N} \bv \cdot \bg \, \d s
	\quad
	\text{for all } \bv \in V.
\end{equation}
Hence, $E = -\tau_N^\star$, where $\tau_N$ is the trace operator from $V$ to $U = L^2(\Gamma_N; \R^d)$.
Clearly, $E: U \to V'$ is compact.

% \comment{Diesen Satz streichen?:}
Starting with \autoref{sec:evolution_vi}, we will omit the indices on the duality bracket $\dual{\cdot}{\cdot}$.
We denote by $\dual{\cdot}{\cdot}$ the dual pairing between $V$ and its dual $V'$, or the scalar products in $S$ or $S^2$, respectively.
This will simplify the notation and cause no ambiguities.
% Moreover, $(\cdot,\cdot)_E$ refers to the scalar product of $L^2(E)$ where $E \subset \Omega$ or $E = \Gamma_N$.

For a Banach space $X$ and $p \in [1,\infty]$, we define the Bochner-Lebesgue space
\begin{equation*}
	L^p(0,T;X) = \{u : [0,T] \to X, \; u \text{ is Bochner measurable and $p$-integrable}\}.
\end{equation*}
In the case $p = \infty$ one has to replace $p$-integrability by essential boundedness.
The norm in $L^p(0,T;X)$ is given by
\begin{equation*}
	\norm{u}_{L^p(0,T;X)} = \bignorm{ \norm{u(\cdot)}_X }_{L^p(0,T)}.
\end{equation*}
By $W^{1,p}(0,T;X)$ we denote the Bochner-Sobolev space consisting of functions $u \in L^p(0,T;X)$ which possess a weak derivative $\dot u \in L^p(0,T;X)$.
Two equivalent norms on $W^{1,p}(0,T;X)$ are given by
\begin{equation}
	\label{eq:norms_in_w1p}
	\big(
		\norm{u}_{L^p(0,T;X)}^p
		+
		\norm{\dot u}_{L^p(0,T;X)}^p
	\big)^{1/p}
	\quad\text{and}\quad
	\big(
		\norm{u(0)}_X^p
		+
		\norm{\dot u}_{L^p(0,T;X)}^p
	\big)^{1/p},
\end{equation}
where the extension to the case $p = \infty$ is clear.
We use $H^1(0,T;X) = W^{1,2}(0,T;X)$.
Moreover, we define the space of functions in $H^1(0,T;X)$ vanishing at $t = 0$
\begin{equation}
	\label{eq:h01}
	H_{\{0\}}^1(0,T;X) = \{ u \in H^1(0,T;X) : u(0) = 0\}.
\end{equation}
Details on Bochner-Lebesgue and Bochner-Sobolev spaces can be found in \cite{Yosida1965}, \cite{GajewskiGroegerZacharias1974}, \cite{DiestelUhl1977}, or \cite{Ruzicka2004}.

\subsubsection*{Yield function and admissible stresses}
We restrict our discussion to the von Mises yield function.
In the context of linear kinematic hardening, it reads
\begin{equation}
	\label{eq:Yield_function}
	\phi(\bSigma) = \big( \abs{\bsigma^D + \bchi^D}^2 - \tilde \sigma_0^2 \big) / 2
\end{equation}
for $\bSigma = (\bsigma,\bchi) \in S^2$, where $\abs{\cdot}$ denotes the pointwise Frobenius norm of matrices and 
\begin{equation}
	\label{eq:deviator}
	\bsigma^D = \bsigma - \frac{1}{d} \, (\trace \bsigma) \, \bI
\end{equation}
is the deviatoric part of $\bsigma$.
The yield function gives rise to the set of admissible generalized stresses
\begin{equation*}
	\KK = \{ \bSigma \in S^2: \phi(\bSigma) \le 0 \quad \text{a.e.\ in } \Omega\}.
\end{equation*}
Let us mention that the structure of the yield function $\phi$ given in \eqref{eq:Yield_function}
implies the \emph{shift invariance}
\begin{equation}
	\label{eq:shift-invariance}
	\bSigma \in \KK
	\quad\Leftrightarrow\quad
	\bSigma + (\btau, -\btau) \in \KK
	\quad\text{for all }\btau \in S.
\end{equation}
This property is exploited quite often in the analysis.

Due to the structure of the yield function $\phi$, $\bsigma^D + \bchi^D$ appears frequently and we abbreviate it and its adjoint by
\begin{equation}
	\label{eq:DD}
	\DD \bSigma = \bsigma^D + \bchi^D
	\quad\text{and}\quad
	\DD^\star\bsigma =
	\begin{pmatrix} \bsigma^D\\\bsigma^D
	\end{pmatrix}
\end{equation}
for matrices $\bSigma \in \S^2$ as well as for functions $\bSigma \in S^2$ and $\bSigma \in L^p(0,T;S^2)$. 
When considered as an operator in function space, $\DD$ maps $S^2$ and $L^p(0,T;S^2)$ continuously into $S$ and $L^p(0,T;S)$, respectively.
For later reference, we also remark that 
\begin{equation*}
	\DD^\star \DD \bSigma = 
	\begin{pmatrix}
		\bsigma^D + \bchi^D \\
		\bsigma^D + \bchi^D \\
	\end{pmatrix}
	\quad \text{and} \quad (\DD^\star \DD)^2 = 2\,\DD^\star \DD
\end{equation*}
holds.
Due to the definition of the operator $\DD$, the constraint $\phi(\bSigma) \le 0$ can be formulated as $\norm{\DD\bSigma}_{L^\infty(\Omega;\S)} \le \tilde\sigma_0$.
Hence, we obtain
\begin{equation}
	\label{eq:bSigma_in_Linfty}
	\bSigma \in \KK
	\quad\Rightarrow\quad
	\DD\bSigma \in L^\infty(\Omega;\S).
\end{equation}

Here and in the sequel
we denote linear operators, e.g.\ $\DD : S^2 \to S$, and the induced Nemytzki operators, e.g.\ $\DD : H^1(0, T; S^2 ) \to H^1(0, T; S)$ and $\DD : L^2(0, T; S^2) \to L^2(0, T; S)$, with the same symbol.
This will cause no confusion, since the meaning will be clear from the context.

\subsubsection*{Operators}
The linear operators $A : S^2 \to S^2$ and $B : S^2 \to V'$
% appearing in \eqref{eq:VI_lower_level_introduction}
are defined as follows.
For $\bSigma = (\bsigma,\bchi) \in S^2$ and $\bT = (\btau,\bmu) \in S^2$, let $A\bSigma$ be defined through
\begin{equation}
	\label{eq:Definition_of_a}
	\dual{\bT}{A\bSigma}_{S^2} = \int_\Omega \btau \dprod \C^{-1} \bsigma \, \dx + \int_\Omega \bmu \dprod \H^{-1} \bchi \, \dx.
\end{equation}
The term $(1/2) \, \dual{A \bSigma}{\bSigma}_{S^2}$ corresponds to the energy associated with the stress state $\bSigma$.
Here $\C^{-1}(x)$ and $\H^{-1}(x)$ are linear maps from $\S$ to $\S$ (i.e., they are fourth order tensors) which may depend on the spatial variable $x$.
For $\bSigma = (\bsigma,\bchi) \in S^2$ and $\bv \in V$, let
\begin{equation}
	\label{eq:Definition_of_b}
	\dual{B\bSigma}{\bv}_{V',V} = - \int_\Omega \bsigma \dprod \bvarepsilon(\bv) \, \dx.
\end{equation}
We recall that $\bvarepsilon(\bv) = \big(\nabla \bv + (\nabla \bv)^\top\big)/2$ denotes the (linearized) strain tensor.

\subsubsection*{Standing assumptions}
Throughout the paper, we require
\begin{assumption}\hfill
	\label{asm:standing_assumptions}
	\begin{enumerate}
		\item The domain $\Omega \subset \R^d$, $d = 3$ is a bounded Lipschitz domain 
			in the sense of \cite[Chapter~1.2]{Grisvard1985}. The boundary of $\Omega$, denoted by $\Gamma$, 
			consists of two disjoint measurable parts $\Gamma_N$ and $\Gamma_D$ such that 
			$\Gamma = \Gamma_N \cup \Gamma_D$.
			While $\Gamma_N$ is a relatively open subset, $\Gamma_D$ is a relatively closed subset of $\Gamma$. 
			Furthermore $\Gamma_D$ is assumed to have positive measure.
			In addition, the set $\Omega \cup \Gamma_N$ is regular in the sense of
			Gröger, cf.~\cite{Groeger1989}. 
			A characterization of regular domains for the case $d \in \{2,3\}$ can be found in \cite[Section~5]{HallerDintelmannMeyerRehbergSchiela2009}.
			This class of domains covers a wide range of geometries. 

		\item The yield stress $\tilde \sigma_0$ is assumed to be a positive constant.
			It equals $\sqrt{2/3} \, \sigma_0$, where $\sigma_0$ is the uni-axial yield stress.

		\item $\C^{-1}$ and $\H^{-1}$ are elements of $L^\infty(\Omega;\LL(\S,\S))$, where $\LL(\S,\S)$ denotes the space of linear operators $\S \to \S$.
			Both $\C^{-1}(x)$ and $\H^{-1}(x)$ are assumed to be uniformly coercive.
			Moreover, we assume that $\C^{-1}$ and $\H^{-1}$ are symmetric, i.e., $\btau \dprod \C^{-1}(x) \, \bsigma = \bsigma \dprod \C^{-1}(x) \, \btau$ and a similar relation for $\H^{-1}$ holds for all $\bsigma, \btau \in \S$.
	\end{enumerate}
\end{assumption}

\hyperref[asm:standing_assumptions]{\autoref*{asm:standing_assumptions}~(1)} is not restrictive.
It enables us to apply the regularity results in \cite{HerzogMeyerWachsmuth2009:3} pertaining to systems of nonlinear elasticity.
The latter appear in the time-discrete forward problem and its regularizations.
Additional regularity leads to a norm gap, which is needed to prove the differentiability of the control-to-state map.

Moreover, \hyperref[asm:standing_assumptions]{\autoref*{asm:standing_assumptions}~(1)} implies that Korn's inequality holds on $\Omega$, i.e.,
\begin{equation}
	\label{eq:Korns_inequality}
	\norm{\bu}^2_{H^1(\Omega;\R^d)} \le c_K \, \big( \norm{\bu}^2_{L^2(\Gamma_D;\R^d)}  + \norm{\bvarepsilon(\bu)}^2_S \big)
\end{equation}
for all $\bu \in H^1(\Omega;\R^d)$,
see e.g.\ \citeautoref{HerzogMeyerWachsmuth2009:3:lem:norm}.
Note that \eqref{eq:Korns_inequality} entails in particular that $\norm{\bvarepsilon(\bu)}_S$ is a norm on $H^1_D(\Omega;\R^d)$ equivalent to the standard $H^1(\Omega;\R^d)$ norm.
A further consequence is that $B^\star$ satisfies the inf-sup condition
\begin{equation}
	\label{eq:inf-sup}
	\norm{\bu}_V
	\le
	\sqrt{c_K} \;
	\norm{B^\star \bu}_{S^2}
	\quad\text{for all } \bu \in V.
\end{equation}

\hyperref[asm:standing_assumptions]{\autoref*{asm:standing_assumptions}~(3)} is satisfied, e.g., for isotropic and homogeneous materials, for which
\begin{equation*}
	\C^{-1}\bsigma = \frac{1}{2\,\mu} \bsigma - \frac{\lambda}{2\,\mu\,(2\,\mu+d\,\lambda)} \trace(\bsigma) \, \bI
\end{equation*}
with Lamé constants $\mu$ and $\lambda$, provided that $\mu > 0$ and $d \, \lambda + 2 \, \mu > 0$ hold.
These constants appear only here and there is no risk of confusion with the plastic multiplier $\lambda$.
%or the Lagrange multiplier $\mu$, which are introduced in \autoref{subsec:Known_results} and \autoref{sec:Strong}, respectively.
A common example for the hardening modulus is given by $\H^{-1} \bchi = \bchi / k_1$ with hardening constant $k_1>0$, see \cite[Section~3.4]{HanReddy1999}.

Clearly,
\hyperref[asm:standing_assumptions]{\autoref*{asm:standing_assumptions}~(3)} shows that $\dual{A\bSigma}{\bSigma}_{S^2} \ge \underline{\alpha} \, \norm{\bSigma}^2_{S^2}$ for some $\underline{\alpha} > 0$ and all $\bSigma \in \S^2$.
Hence, the operator $A$ is $S^2$-elliptic.

\subsection{Weak formulation of \eqref{eq:Forward_problem_strong_form} and known results}
\label{subsec:stress_based_formulation}

Testing the strong formulation of the equilibrium of forces
\begin{equation*}
	\div \bsigma(t) = \bnull \quad\text{in } \Omega
	\qquad\text{and}\qquad
	\bsigma(t) \cdot \bn = \bg(t) \quad\text{on }\Gamma_N
\end{equation*}
with $\bv \in V$ and integrating by parts, we obtain 
\begin{equation*}
	\dual{\bSigma(t)}{B^\star \bv}_{S^2}
	= -\int_\Omega \bsigma(t) \dprod \bvarepsilon(\bv) \, \d x
	= -\int_{\Gamma_N} \bg(t) \cdot \bv \, \d s
	= \dual{\ell(t)}{\bv}_{V', V}
\end{equation*}
for all $\bv \in V,$
or equivalently $B\bSigma(t) = \ell(t)$ in $V'$, with $\ell = E \bg$, see \eqref{eq:def_E}.
% This is the weak formulation of the equilibrium of forces.
% Hence, we obtained \eqref{eq:Lower-Level_Problem_multi2}.

In order to derive the weak formulation of the first two equations of \eqref{eq:Forward_problem_strong_form},
we fix an arbitrary test function $\bT = (\btau, \bmu) \in \KK$.
Testing the first and second equation of \eqref{eq:Forward_problem_strong_form} with $\btau-\bsigma(t)$ and $\bmu-\bchi(t)$, respectively,
we obtain
\begin{equation*}
	\dual{A \dot\bSigma(t) + B^\star \dot\bu(t)}{\bT - \bSigma(t)}_{S^2}
	+ \int_\Omega \lambda(t) \, \DD\bSigma(t)\dprod \big(\DD\bT - \DD\bSigma(t) \big) \, \d x = 0.
\end{equation*}
Due to the complementarity relation in \eqref{eq:Forward_problem_strong_form} and $\bSigma(t), \bT \in \KK$ we find
\begin{equation*}
	\int_\Omega \lambda(t) \, \DD\bSigma(t)\dprod(\DD\bT - \DD\bSigma(t)) \, \d x
	\le
	\int_\Omega \lambda(t) \, \big( \abs{\DD\bSigma(t)} \, \abs{\DD\bT} - \tilde\sigma_0^2 \big) \, \d x
	\le
	0.
\end{equation*}
Hence, we have
\begin{equation*}
	\dual{A \dot\bSigma(t) + B^\star \dot\bu(t)}{\bT - \bSigma(t)}_{S^2} \ge 0
	\quad \text{for all } \bT \in \KK.
\end{equation*}%

We have derived the weak formulation of \eqref{eq:Forward_problem_strong_form} in the stress-based (so-called dual) form.
It is represented by a time-dependent, rate-independent variational inequality (VI) of mixed type:
find generalized stresses $\bSigma \in H^1(0,T;S^2)$ and displacements $\bu \in H^1(0,T;V)$ which satisfy $\bSigma(t) \in \KK$ and 
\begin{equation}
	\label{eq:VI_lower_level_introduction}
	\tag{$\mathbf{VI}$}
	\begin{aligned}
		\dual{A\dot\bSigma(t)+B^\star\dot\bu(t)}{\bT - \bSigma(t)}_{S^2} &\ge \mrep{0}{\ell(t)} \quad \text{for all } \bT \in \KK, \\
		B\bSigma(t) &= \ell(t) \quad \text{in } V',
	\end{aligned}
\end{equation}
f.a.a.\ $t \in (0,T)$.
Moreover, \eqref{eq:VI_lower_level_introduction} is subject to the initial condition $(\bSigma(0),\bu(0)) = (\bnull,\bnull)$.
In order to guarantee the existence of a solution, we have to require $\ell(0) = \bnull$.
Note that a weak formulation involving the plastic multiplier $\lambda$ is given in \eqref{eq:Lower-Level_Problem_multi}.

%%fakesubsubsection: Introduction
The remainder of this section is devoted to known results on the analysis of \eqref{eq:VI_lower_level_introduction}.
We start with the results given in \cite[Section~8]{HanReddy1999}.
We point out that the authors handle a general situation which includes the case of kinematic hardening as a special case.

\subsubsection*{Reformulation as a sweeping process in $\bSigma$}
For $\ell \in V'$, we denote by
\begin{equation}
	\label{eq:KK_ell}
	\KK_\ell = \{ \bSigma \in \KK: B\bSigma = \ell \}
\end{equation}
the subset of $\KK$ on which the constraint $B\bSigma = \ell$ is fulfilled.
Testing the first equation of \eqref{eq:VI_lower_level_introduction} with $\bT \in \KK_{\ell(t)}$
results in:
given $\ell \in H_{\{0\}}^1(0,T;V')$, % i.e., $\ell \in H^1(0,T;V')$ and $\ell(0) = \bnull$,
find $\bSigma \in H_{\{0\}}^1(0,T;V')$,
satisfying
% $\bSigma(0) = \bnull$,
$\bSigma(t) \in \KK_{\ell(t)}$
and
\begin{equation}
	\label{eq:VI}
	% \tag{$\mathbf{VI}$}
	\dual{A \dot\bSigma(t) }{ \bT - \bSigma(t) }_{S^2} \ge 0 \qquad \text{for all } \bT \in \KK_{\ell(t)} \text{ and almost all } t \in (0,T).
\end{equation}
This is called the stress problem of plasticity.
% \marginpar{\comment{Mention gradient flow w.r.t.\ the set $\KK_\ell$?}}
It is a time-dependent VI, where the associated convex set $\KK_{\ell(t)}$ changes in time.
Such an equation was introduced in \cite{Moreau1977}.
We mention that there are existence and uniqueness results, but no continuity results for the abstract situation considered in \cite{Moreau1977}
seem to be available.

\subsubsection*{Stress problem}
\cite[Section~8]{HanReddy1999} deals with the so-called dual formulation \eqref{eq:VI}.
In \cite[Theorem~8.9]{HanReddy1999} the existence and uniqueness of a solution $\bSigma$ of \eqref{eq:VI} is shown
together with the a-priori bound
\begin{equation}
	\norm{\bSigma}_{H^1(0, T; S^2)} \le C \, \norm{\ell}_{H^1(0, T; V')}.
	\label{eq:a-priori_bsigma}
\end{equation}
Let us denote the solution map by $\GG^\bSigma$, i.e.\ $\bSigma = \GG^\bSigma(\ell)$.
Additionally, \cite[Theorem~8.10]{HanReddy1999} shows the local Hölder continuity of index $1/2$ of $\GG^\bSigma : H_{\{0\}}^1(0, T; V') \to L^\infty(0, T; S^2)$, i.e.,
\begin{equation}
	\label{eq:local_1/2_hoelder_dual}
	\bignorm{\GG^\bSigma(\ell_1) - \GG^\bSigma(\ell_2)}_{L^\infty(0, T; S^2)}^2 \le C \, \big(\norm{\dot\ell_1}_{L^2(0, T; V')} + \norm{\dot\ell_2}_{L^2(0, T; V')}\big) \, \norm{\ell_1 - \ell_2}_{L^2(0, T; V')}.
\end{equation}
We mention that this is not a typical estimate.
As long as the derivatives of $\ell_i$ remain bounded in $L^2(0,T;V')$, one can control the $L^\infty$-norm
of $\bSigma_1-\bSigma_2$ solely through the $L^2$-norm of the difference $\ell_1 - \ell_2$.
We give a generalization of this estimate in \autoref{thm:local_1/2_hoelder_evi} in the context of EVIs.

Finally, \cite[Theorem~8.12]{HanReddy1999} shows that one can introduce a (not necessarily unique) multiplier associated with the equilibrium of forces $B\bSigma = \ell$, which can be interpreted as the displacement $\bu$.
As a consequence, $(\bSigma, \bu)$ satisfies \eqref{eq:VI_lower_level_introduction}.
Thus, given $\ell \in H_{\{0\}}^1(0,T; V')$, there exists $(\bSigma, \bu) \in H_{\{0\}}^1(0,T; S^2 \times V)$ such that $\bSigma(t) \in \KK$ and
\begin{subequations}
	\label{eq:Lower-Level_Problem}
	\begin{align}
		\label{eq:Lower-Level_Problem1}
		\dual{A\dot\bSigma(t)+B^\star\dot\bu(t)}{\bT - \bSigma(t)}_{S^2} &\ge \mrep{0}{\ell(t)} \quad \text{for all } \bT \in \KK, \\
		\label{eq:Lower-Level_Problem2}
		B\bSigma(t) &= \ell(t) \quad \text{in } V'
	\end{align}
\end{subequations}
holds for almost all $t \in (0, T)$.
This shows the equivalence of \eqref{eq:VI} and \eqref{eq:VI_lower_level_introduction}.
Note that \eqref{eq:Lower-Level_Problem} is equivalent to \eqref{eq:VI_lower_level_introduction}.
In the sequel we use either reference as appropriate.

\subsubsection*{Kinematic hardening}
In the case of kinematic hardening, the uniqueness of $\bu$ is obtained easily.
Let us test \eqref{eq:Lower-Level_Problem1} with $\bT = \bSigma(t) + (\btau, -\btau) = (\bsigma(t)+\btau, \bchi(t)-\btau)$.
Due to the shift invariance \eqref{eq:shift-invariance}, we have $\bT \in \KK$ for all $\btau \in S$. This yields
\begin{equation*}
	\dual{ A \dot\bSigma(t) + B^\star \dot\bu(t) }{ (\btau, -\btau) }_{S^2} = 0 \quad \text{for all }\btau \in S.
\end{equation*}
Using the definitions of $A$ and $B$, see \eqref{eq:Definition_of_a} and \eqref{eq:Definition_of_b},
respectively,
% and the notation $\bSigma = (\bsigma, \bchi)$
we obtain
\begin{equation*}
	\C^{-1} \dot\bsigma - \bvarepsilon(\dot\bu) - \H^{-1} \dot\bchi = \bnull \quad\text{almost everywhere in } (0,T) \times \Omega.
\end{equation*}
Integrating from $0$ to $t$ and using the initial condition $(\bSigma(0), \bu(0)) = \bnull$ yields
\begin{equation}
	\label{eq:relation_sigma_chi_u}
	\C^{-1} \bsigma - \bvarepsilon(\bu) - \H^{-1} \bchi = \bnull \quad\text{almost everywhere in } (0,T) \times \Omega.
\end{equation}
Together with the inf-sup condition of $B^\star = (-\bvarepsilon, \bnull)$, see \eqref{eq:inf-sup}, this proves the uniqueness of $\bu$.
Using \eqref{eq:a-priori_bsigma} yields the a-priori estimate
\begin{equation}
	\norm{\bSigma}_{H^1(0, T; S^2)} + \norm{\bu}_{H^1(0, T; V)} \le C \, \norm{\ell}_{H^1(0, T; V')}.
	\label{eq:a-priori_bsigma_bu}
\end{equation}
We denote the solution mapping $\ell \mapsto \bu$ of \eqref{eq:Lower-Level_Problem} by $\GG^\bu$.
Moreover, the solution operator of \eqref{eq:VI_lower_level_introduction} mapping $\ell \to (\bSigma, \bu)$ is denoted by $\GG = (\GG^\bSigma, \GG^\bu)$.

\subsubsection*{Primal problem}
In \cite[Section~7]{HanReddy1999} the primal formulation of \eqref{eq:VI} is considered.
Both formulations are equivalent, see \cite[Theorem~8.3]{HanReddy1999}.
For the primal formulation and under the (additional) assumption of kinematic (or combined kinematic-isotropic) hardening, the (global) Lipschitz continuity of the solution operator from $W^{1,1}(0,T; Z')$ to $L^{\infty}(0, T; Z)$ was proven, see \cite[pp.~170--171]{HanReddy1999}.
Here, $Z$ is the appropriate function space for the analysis of the primal formulation.
Due to the equivalence of the primal and the dual problem, this Lipschitz estimate carries over to the dual problem.
We obtain
\begin{equation}
	\label{eq:lipschitz_continuity_W11_Linfty}
	\norm{\GG(\ell_1) - \GG(\ell_2)}_{L^\infty(0,T; S^2\times V)} \le C \, \norm{\ell_1 - \ell_2}_{W^{1,1}(0,T;V')}.
\end{equation}
% \comment{Check formulation} This improves the local Hölder continuity, but with a stronger norm on the right-hand side.
% This Lipschitz estimate was apparently first proven by \cite{Groeger1978:1}.
This Lipschitz estimate was actually already contained in rather unknown works of Gröger, see \cite{Groeger1978:1,Groeger1978:2,Groeger1978}.
In \cite[Section~4]{Groeger1978} the system \eqref{eq:VI_lower_level_introduction} is reformulated as an evolution equation associated with a maximal monotone operator.
Then the Lipschitz estimate follows by the classical result \cite[Lemma~3.1]{Brezis1973}.

\begin{remark}
	\label{rem:continuity_not_sufficient}
	In order to derive optimality conditions, the continuity results mentioned above are not sufficient.
	In addition, we need the continuity of $\GG : H_{\{0\}}^1(0,T;V') \to H_{\{0\}}^1(0,T;S^2 \times V)$ on two occasions.
	First, this continuity is needed to prove the approximability of local solutions by time-discrete minimizers in \autoref{thm:continuous_approximation_with_global_solutions}.
	Second, the strong convergence of $(\dot\bSigma, \dot\bu)$ in $L^2(0,T;S^2 \times V)$
	is needed to pass to the limit in the optimality system, see \citeautoref{Wachsmuth2011:4:sec:weak_stationarity_quasistatic}.
\end{remark}
The required continuity of $\GG : H_{\{0\}}^1(0,T;V') \to H_{\{0\}}^1(0,T;S^2\times V)$ is shown in \autoref{cor:continuity_GG_in_H1} by building on some 
results of \cite{Krejci1996,Krejci1998} about evolution variational inequalities (EVIs).
To our knowledge, this is a new result for the analysis of \eqref{eq:VI_lower_level_introduction}.

\subsubsection*{Existence, uniqueness and regularity of the plastic multiplier}

We have already seen that the generalized plastic strain $\bP = -A\bSigma - B^\star \bu$ is unique.
Thus by \cite{HerzogMeyerWachsmuth2010:1} we obtain the existence and uniqueness of the plastic multiplier $\lambda \in L^2(0,T; L^2(\Omega))$ which can be understood as a multiplier associated with the constraint $\bSigma \in \KK$ or rather $\phi(\bSigma) \le 0$.
We obtain the system
\begin{subequations}
	\label{eq:Lower-Level_Problem_multi}
	\begin{align}
		A \dot\bSigma + B^\star \dot\bu + \lambda \, \DD^\star \DD \bSigma &= \mrep{\bnull}{\ell} \quad \text{in } L^2(0,T; S^2), \label{eq:Lower-Level_Problem_multi1} \\
		B\bSigma &= \ell \quad \text{in } L^2(0,T;V'), \label{eq:Lower-Level_Problem_multi2}\\
		0 \le \lambda \quad \perp \quad \phi(\bSigma) & \le \mrep{0}{\ell} \quad \text{a.e.\ in } (0,T) \times \Omega.
	\end{align}
\end{subequations}
Endowed with the initial condition $(\bSigma(0),\bu(0)) = (\bnull,\bnull)$,
this system is equivalent to \eqref{eq:VI} and \eqref{eq:Lower-Level_Problem}.
% \added[GW]{In contrast to \eqref{eq:Lower-Level_Problem}, the system \eqref{eq:Lower-Level_Problem_multi} is a weak formulation of \eqref{eq:Forward_problem_strong_form} containing the plastic multiplier $\lambda$.}

\section{Analysis of the continuous optimal control problem}
%%fakesubsection: Introduction
\label{sec:continuous}
In this section we study the continuous (i.e.\ non-discretized) optimal control problem.
In the first subsection, we give a brief introduction to evolution variational inequalities (EVIs) and state the continuity of the solution map of an EVI from $H^1(0,T;X)$ into itself, see \autoref{thm:solution_of_evi}.
This enables us to show the continuity of the solution map of \eqref{eq:VI_lower_level_introduction}, see \autoref{sec:evolution_vi}.
\autoref{subsec:weak_continuity} is devoted to show the weak continuity of the control-to-state map of \eqref{eq:upper_level_in_intro}.
Due to this weak continuity, we are able to conclude the existence of an optimal control in \autoref{subsec:continuous_ulp}, see \autoref{thm:existence_continuous}.

\subsection{Introduction to evolution variational inequalities}
In this section we give a short introduction to \emph{evolution variational inequalities}.
A comprehensive presentation of this topic can be found in \cite{Krejci1998}.
We also mention the contributions \cite{KrejciLovicar1990,Krejci1996,BrokateKrejci1998}.
For convenience of the reader who wishes to consult \cite{Krejci1998} in parallel, we use the notation of \cite{Krejci1998} in this subsection and turn back to our notation in the next subsection.

Let $X$ be some Hilbert space and $Z \subset X$ be some convex, closed set.
Given a function $u : [0,T] \to X$ and an initial value $x_0 \in Z$, find $x : [0,T] \to Z \subset X$ such that
$x(0) = x_0$ and
\begin{equation}
	\label{eq:abstract_evi}
	\tag{$\mathbf{EVI}$}
	\scalarprod{\dot u(t) - \dot x(t)}{x(t) - \tilde x}_X \ge 0
	\quad
	\text{for all } \tilde x \in Z.
\end{equation}
There are many applications that lead to an EVI, see \cite[page~2]{Krejci1998} for references and further comments on this topic.
We only mention the strain or stress driven problem of plasticity with linear kinematic hardening.
In the strain (stress) driven problem, the strain (stress) is viewed as a known quantity, whereas the stress (strain) has to be determined.
We show that the dual formulation \eqref{eq:Lower-Level_Problem} is an appropriate starting point for deriving the strain driven problem.
% , whereas for the stress driven problem, one has to emanate from the primal formulation of plasticity, see \cite[Chapter~7]{HanReddy1999}, which is not considered in this paper. \textbf{[CHECK THIS]}
Krejčí derived the stress and the strain driven problems in \cite[(1.29) and (1.31)]{Krejci1998}, respectively.

In the strain driven problem, the strain $\bvarepsilon(\bu)$ (or, equivalently, the strain rate $(\bvarepsilon(\dot\bu),\bnull) = -B^\star \dot\bu$) is viewed as a given quantity.
The variational inequality \eqref{eq:Lower-Level_Problem1} 
reduces to find a function $\bSigma : [0,T] \to \KK \subset S^2$, such that
\begin{equation}
	\label{eq:strain_driven_plasticity}
	\vdual{ \dot\bSigma(t) -
		\begin{pmatrix}
			\C \, \bvarepsilon(\dot\bu(t)) \\ 0
		\end{pmatrix}
	}{\bT - \bSigma(t)}_A \ge 0
	\quad
	\text{for all } \bT \in \KK,
\end{equation}
where $\dual{\cdot}{\cdot}_A$ is the scalar product on $S^2$ induced by $A$, i.e.,
\begin{equation}
	\label{eq:norm_A}
	\dual{\bSigma}{\bT}_A := \dual{A\bSigma}{\bT}_{S^2}
	\quad\text{for }\bSigma,\bT \in S^2.
\end{equation}
Using the coercivity of $A$, which is ensured by \hyperref[asm:standing_assumptions]{\autoref*{asm:standing_assumptions}~(3)},
we find that $\dual{\cdot}{\cdot}_A$ is scalar product on $S^2$ which is equivalent to the standard scalar product.
The equation \eqref{eq:strain_driven_plasticity} is of the form \eqref{eq:abstract_evi} in the Hilbert space $X = S^2$ equipped with the scalar product $\dual{\cdot}{\cdot}_A$ and the feasible set $Z = \KK$.

\begin{remark}%\ \parskip=0cm\parsep=0cm
	\label{rem:differences_evi_strain_driven}
	% \begin{enumerate}
	% 	\item 
	We remark that there is a fundamental difference between the dual formulation \eqref{eq:Lower-Level_Problem} and the strain driven problem \eqref{eq:strain_driven_plasticity}. Whereas in \eqref{eq:Lower-Level_Problem} \emph{both} $\bSigma$ and $\bu$ are unknown quantities, $\bvarepsilon(\bu)$ has to be \emph{a-priori known} in \eqref{eq:strain_driven_plasticity}. Moreover, the solution $\bSigma$ of \eqref{eq:strain_driven_plasticity} has to fulfill \eqref{eq:Lower-Level_Problem2}.

		% \item 
	Let us denote the solution map of \eqref{eq:strain_driven_plasticity} which maps $\bu \to \bSigma$ by $\SS$.
	Therefore, \eqref{eq:Lower-Level_Problem2} yields $\ell = B\bSigma = B\SS(\bu)$. Thus, \eqref{eq:Lower-Level_Problem} is equivalent to
	\begin{equation*}
		\text{Find } \bu \text{ such that } B\SS(\bu) = \ell.
	\end{equation*}
	This technique is used in \cite[Chapter~III]{Krejci1996} for hyperbolic equations arising in plasticity, i.e., the author considers plasticity problems where the acceleration term $\rho \, \ddot\bu$ is included in the balance of forces \eqref{eq:Lower-Level_Problem2}.
	% \end{enumerate}
\end{remark}

The following theorem summarizes results on \eqref{eq:abstract_evi} given in \cite{Krejci1996,Krejci1998}.
\begin{theorem}
	\label{thm:solution_of_evi}
	% Let $p \in [1,\infty]$ be given.
	Let $u \in W^{1,1}(0, T; X)$ and $x_0 \in Z$ be given. Then there exists a unique solution $S(x_0, u) := x \in W^{1,1}(0, T; X)$ of \eqref{eq:abstract_evi}.
	Moreover, $S : Z \times W^{1,1}(0, T; X) \to L^\infty(0, T; X)$ is globally Lipschitz continuous
	and $S : Z \times W^{1,p}(0, T; X) \to W^{1,p}(0, T; X)$ is continuous for all $p < \infty$.
\end{theorem}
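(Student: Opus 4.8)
The plan is to obtain the three conclusions in a natural order: existence and uniqueness first, then the global Lipschitz estimate into $L^\infty$, and finally the continuity into $W^{1,p}$ for $p<\infty$. For existence and uniqueness, the standard approach is time-discretization: partition $[0,T]$ into steps of size $\tau$ and replace \eqref{eq:abstract_evi} by the sequence of incremental problems $x_{k}$ minimizing $\frac12\norm{x - x_{k-1}}_X^2 - \scalarprod{u_k - u_{k-1}}{x}_X$ over $Z$ (equivalently, $x_k$ is the projection onto $Z$ of $x_{k-1} + (u_k - u_{k-1})$). Each incremental problem has a unique solution by the projection theorem on the closed convex set $Z$. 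Interpolating in time and passing $\tau\to 0$, one uses the a priori bound coming from testing the variational inequality with $\tilde x = x_{k-1}$ to get $\norm{x_k - x_{k-1}}_X \le \norm{u_k - u_{k-1}}_X$, which yields boundedness of $\dot x$ in $L^1(0,T;X)$ (and in $L^p$ when $\dot u\in L^p$) and hence a limit solving \eqref{eq:abstract_evi}. Uniqueness is immediate: for two solutions $x_1,x_2$ with data $u_1,u_2$, subtracting the two inequalities (each tested with the other's solution) and integrating gives the standard contraction estimate below.

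For the global Lipschitz continuity of $S : Z\times W^{1,1}(0,T;X)\to L^\infty(0,T;X)$, the key pointwise inequality is obtained by testing the EVI for $x_1$ at time $t$ with $\tilde x = x_2(t)$ and the EVI for $x_2$ with $\tilde x = x_1(t)$, adding, and using $\frac{d}{dt}\tfrac12\norm{x_1(t)-x_2(t)}_X^2 = \scalarprod{\dot x_1(t) - \dot x_2(t)}{x_1(t)-x_2(t)}_X$. This produces
\begin{equation*}
	\frac{d}{dt}\, \tfrac12 \norm{x_1(t) - x_2(t)}_X^2 \le \scalarprod{\dot u_1(t) - \dot u_2(t)}{x_1(t) - x_2(t)}_X ,
\end{equation*}
and since $x_1(0) = x_2(0) = x_0$, integrating and applying the Cauchy--Schwarz inequality gives $\tfrac12\norm{x_1(t)-x_2(t)}_X^2 \le \int_0^t \norm{\dot u_1 - \dot u_2}_X\,\norm{x_1-x_2}_X\,ds$. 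A Gronwall-type (indeed Bihari-type) argument on $m(t) := \max_{s\le t}\norm{x_1(s)-x_2(s)}_X$ then yields $\norm{x_1 - x_2}_{L^\infty(0,T;X)} \le \norm{\dot u_1 - \dot u_2}_{L^1(0,T;X)}$, and combining with dependence on $x_0$ (comparing solutions with different initial data by the same testing trick, noting the initial gap persists) gives the claimed global Lipschitz bound in terms of $\norm{x_{0,1}-x_{0,2}}_X + \norm{u_1-u_2}_{W^{1,1}(0,T;X)}$.

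The continuity of $S : Z\times W^{1,p}(0,T;X) \to W^{1,p}(0,T;X)$ for $p<\infty$ is where I expect the main difficulty, since the map is nonlinear and $\dot x$ depends on $\dot u$ only in a Lipschitz-into-$L^\infty$, not $W^{1,p}$-into-$W^{1,p}$, fashion at the level of the crude estimates; one does not get Lipschitz continuity of the derivatives, only continuity. The strategy is: given $u_n \to u$ in $W^{1,p}(0,T;X)$ and $x_{0,n}\to x_0$ in $X$, the a priori bound $\norm{\dot x_n}_{L^p} \le \norm{\dot u_n}_{L^p} + C$ shows $x_n$ is bounded in $W^{1,p}(0,T;X)$, so a subsequence converges weakly in $W^{1,p}$ and (by Aubin--Lions or simply by the $L^\infty$-Lipschitz estimate already proven) strongly in $L^\infty(0,T;X)$; the limit is the solution $x = S(x_0,u)$ by passing to the limit in the EVI, so the whole sequence converges this way. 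To upgrade $\dot x_n \rightharpoonup \dot x$ to strong convergence in $L^p(0,T;X)$, test the EVI for $x_n$ with $\tilde x = x(t)$ and the EVI for $x$ with $\tilde x = x_n(t)$, add, and integrate to get
\begin{equation*}
	\int_0^T \scalarprod{\dot x_n - \dot x}{x_n - x}_X\,dt \le \int_0^T \scalarprod{\dot u_n - \dot u}{x_n - x}_X\,dt ,
\end{equation*}
which together with $\frac{d}{dt}\tfrac12\norm{x_n - x}_X^2 = \scalarprod{\dot x_n - \dot x}{x_n-x}_X$ forces $\norm{x_n(T) - x(T)}_X \to 0$ but more importantly can be combined with lower semicontinuity of $\int\norm{\dot x_n}^2$ (in the Hilbert case $p=2$) to conclude $\dot x_n \to \dot x$ strongly in $L^2$; for general $p<\infty$ one argues by a density/interpolation bootstrapping or by a direct equi-integrability argument on the increments using the incremental formulation uniformly in $\tau$. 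The delicate point throughout is that no contraction is available for the rates, so strong convergence of $\dot x_n$ must be extracted from the variational structure (the monotonicity inherent in projecting onto $Z$) rather than from an estimate, and care is needed that the argument does not secretly require $p=2$ — this is the step I would write out most carefully, following \cite{Krejci1998}.
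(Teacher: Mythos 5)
The paper does not itself prove this theorem; it is cited verbatim from Krej\v{c}\'{\i}'s work, and the text explicitly says so immediately before the statement. So the fair comparison is with Krej\v{c}\'{\i}'s arguments, which the paper essentially reruns later for the time-discrete operator (\autoref{thm:strong_convergence}) using the machinery collected in the appendix (\autoref{cor:convergence_in_W^1,p}). Your discretization-based existence argument, the a priori bound $\norm{\dot x(t)}_X \le \norm{\dot u(t)}_X$, and the testing argument for global Lipschitz continuity into $L^\infty$ are all correct and match that route.

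The genuine gap is in your treatment of $W^{1,p}$-continuity, and you rightly sensed it. Your suggestion for $p=2$ --- combine weak convergence $\dot x_n \rightharpoonup \dot x$ with ``lower semicontinuity of $\int\norm{\dot x_n}^2$'' --- cannot close, because the only upper bound you have is $\norm{\dot x_n}_{L^2}\le\norm{\dot u_n}_{L^2}$, and in general $\norm{\dot x}_{L^2}<\norm{\dot u}_{L^2}$ strictly (part of the ``energy'' goes into the play component $\xi = u - x$). So you obtain $\limsup\norm{\dot x_n}_{L^2}\le\norm{\dot u}_{L^2}$, which does not force norm convergence of $\dot x_n$ to $\dot x$. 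The ``equi-integrability/bootstrapping'' hint for general $p$ is too vague to evaluate and does not identify the operative mechanism.

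The missing idea is the pointwise orthogonality $\scalarprod{\dot\xi(t)}{\dot x(t)}_X = 0$ a.e.\ (obtained by testing \eqref{eq:abstract_evi} at time $t$ with $\tilde x = x(t\pm h)$ and letting $h\searrow 0$), which is exactly the property quoted in the paper as \cite[(3.16ii)]{Krejci1998} in the proof of \autoref{thm:local_1/2_hoelder_evi}. Since $\dot u = \dot x + \dot\xi$ with orthogonal summands, one gets the \emph{norm equality}
\begin{equation*}
	\norm{\dot u(t) - 2\dot x(t)}_X = \norm{\dot\xi(t) - \dot x(t)}_X = \norm{\dot u(t)}_X \quad\text{a.e.,}
\end{equation*}
for every exact solution. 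Applying the Bochner--Lebesgue convergence lemma (\autoref{cor:convergence_in_W^1,p}, conditions \eqref{eq:condition_of_cor:convergence_in_W^1,p_1}--\eqref{eq:condition_of_cor:convergence_in_W^1,p_4}) to the auxiliary functions $v_n := u_n - 2x_n$ with majorants $g_n := \norm{\dot u_n(\cdot)}_X$ then gives $v_n\to v_0$ in $W^{1,p}(0,T;X)$ for every $p<\infty$: the $L^\infty$ convergence of $v_n$ follows from the already-proven Lipschitz estimate, $g_n\to g_0$ in $L^p$ follows from $\dot u_n\to\dot u_0$ in $L^p$, the majorization holds with equality, and the equality $\norm{\dot v_0(t)}_X=g_0(t)$ is precisely the norm identity above. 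Writing $x_n = (u_n - v_n)/2$ finishes the proof. This is exactly the device the paper deploys in the proof of \autoref{thm:strong_convergence} (see \eqref{eq:setting_for_cor:convergence_in_W^1,p} and the displays \eqref{eq:thm:strong_convergence_1}, \eqref{eq:thm:strong_convergence_2}); without it there is no route to strong convergence of the derivatives, for any $p$.
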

The operator $S$ is called the \emph{stop operator},
whereas $P(x_0,u) = u - S(x_0,u)$ is called the \emph{play operator}.

Let us show a result which generalizes the local Hölder estimate \eqref{eq:local_1/2_hoelder_dual}.
In the context of EVIs this is a new result.
It shows that the play operator $P$ possesses an additional smoothing property which is not shared by the stop operator $S$.
Another such property is that $P$ maps $C(0,T;X)$ (continuous functions mapping $[0,T] \to X$) to $CBV(0,T;X)$ (continuous functions of bounded variation mapping $[0,T] \to X$), see \cite[Theorem~3.11]{Krejci1998}.
Again, this does not hold for the stop operator $S$.
\begin{theorem}
	\label{thm:local_1/2_hoelder_evi}
	Let $p \in [1, \infty]$ be given and denote by $q$ its dual exponent.
	Let $u_1, u_2 \in W^{1,q}(0,T; X)$ and initial values $x_1^0, x_2^0 \in Z$ be given.
	Let $\xi_i = P(x_i^0, u_i)$, $i = 1,2$.
	Then the estimate
	\begin{align}
		\norm{\xi_1-\xi_2}_{L^\infty(0,T; X)}^2
		&\le
		2 \, \big( \norm{\dot u_1}_{L^q(0,T; X)} + \norm{\dot u_2}_{L^q(0,T; X)} \big) \norm{ u_1 - u_2 }_{L^p(0,T; X)}
		\notag
		\\
		&
		% \label{eq:local_1/2_hoelder_evi}
		\notag
		\qquad
		+
		\norm{\xi_1(0) - \xi_2(0)}_X^2
	\end{align}
	holds, where $\xi_i(0) = u_i(0) - x_i^0$, $i = 1,2$.
\end{theorem}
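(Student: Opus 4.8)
The plan is to work directly from the defining inequality \eqref{eq:abstract_evi} and exploit that $\dot\xi_i = \dot u_i - \dot x_i$, where $x_i = S(x_i^0,u_i)$ solves the EVI. Write $\xi = \xi_1 - \xi_2$, $x = x_1 - x_2$, $u = u_1 - u_2$, so $\xi = u - x$ and $\dot\xi = \dot u - \dot x$. The key observation, which is the engine of this kind of estimate, is that for the stop operator one has the monotonicity-type inequality
\begin{equation*}
	\scalarprod{\dot x_1(t) - \dot x_2(t)}{x_1(t) - x_2(t)}_X
	\le
	\scalarprod{\dot u_1(t) - \dot u_2(t)}{x_1(t) - x_2(t)}_X
	\quad\text{f.a.a. } t,
\end{equation*}
obtained by testing the EVI for $x_1$ with $\tilde x = x_2(t) \in Z$, the EVI for $x_2$ with $\tilde x = x_1(t) \in Z$, and adding. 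Equivalently, $\scalarprod{\dot\xi(t)}{x(t)}_X \ge 0$, i.e. $\scalarprod{\dot u(t)-\dot x(t)}{x(t)}_X\ge 0$. First I would rewrite this purely in terms of $\xi$: since $x = u - \xi$ and $\dot x = \dot u - \dot\xi$, the inequality $\scalarprod{\dot\xi}{x}_X\ge0$ becomes $\scalarprod{\dot\xi(t)}{u(t)-\xi(t)}_X\ge0$, hence
\begin{equation*}
	\frac12\frac{\d}{\dt}\norm{\xi(t)}_X^2
	=
	\scalarprod{\dot\xi(t)}{\xi(t)}_X
	\le
	\scalarprod{\dot\xi(t)}{u(t)}_X .
\end{equation*}

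Next I would integrate this differential inequality from $0$ to an arbitrary $t\in[0,T]$, using absolute continuity of $s\mapsto\norm{\xi(s)}_X^2$ (which holds because $\xi\in W^{1,1}(0,T;X)$ by \autoref{thm:solution_of_evi}, as $u_i$ and $x_i$ both lie in $W^{1,q}$, resp. at least $W^{1,1}$). This gives
\begin{equation*}
	\norm{\xi(t)}_X^2
	\le
	\norm{\xi(0)}_X^2
	+ 2\int_0^t \scalarprod{\dot\xi(s)}{u(s)}_X \, \ds .
\end{equation*}
Now bound the integral. Since $\dot\xi = \dot u_1 - \dot u_2 - (\dot x_1 - \dot x_2)$, split $\int_0^t\scalarprod{\dot\xi}{u}_X$ into the $\dot u$-part and the $\dot x$-part. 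For the $\dot u$-part, $\int_0^t\scalarprod{\dot u_1(s)-\dot u_2(s)}{u(s)}_X\ds$ is handled by Hölder in time ($L^q$ against $L^p$) and the pointwise Cauchy–Schwarz in $X$, giving a contribution $\le(\norm{\dot u_1}_{L^q}+\norm{\dot u_2}_{L^q})\norm{u}_{L^p(0,T;X)}$ after crudely estimating $\norm{\dot u_1-\dot u_2}_{L^q}\le\norm{\dot u_1}_{L^q}+\norm{\dot u_2}_{L^q}$. For the $\dot x$-part I would use the monotonicity inequality in the other direction: testing the EVI for $x_1$ with $\tilde x = x_2$ and vice versa actually shows $\scalarprod{\dot x_1(s)-\dot x_2(s)}{x_1(s)-x_2(s)}_X\le\scalarprod{\dot u_1(s)-\dot u_2(s)}{x_1(s)-x_2(s)}_X$; but more to the point, $-\int_0^t\scalarprod{\dot x_1-\dot x_2}{u}_X\ds$ should be rewritten, via $u = x + \xi$, as $-\int_0^t\scalarprod{\dot x}{x}_X\ds - \int_0^t\scalarprod{\dot x}{\xi}_X\ds$. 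The first of these is $-\tfrac12(\norm{x(t)}_X^2-\norm{x(0)}_X^2)$, and one has to check its sign works in our favour; the second is again $\scalarprod{\dot u-\dot\xi}{\xi}_X$ integrated — combining this with the left-hand side produces a telescoping/absorption that leaves only the $\dot u$ terms and the initial value. Concretely, the cleanest route is: from $\scalarprod{\dot\xi}{x}_X\ge0$ we already derived everything we need; the $\dot x$-contribution never has to be estimated on its own, because $\scalarprod{\dot\xi}{u}_X = \scalarprod{\dot\xi}{\xi}_X + \scalarprod{\dot\xi}{x}_X$ and we only keep the first term as $\tfrac12\tfrac{\d}{\dt}\norm\xi_X^2$ while discarding the (nonnegative, hence harmless on the correct side) second term — wait, that overshoots; the correct manipulation is to keep $\scalarprod{\dot\xi}{u}_X$ intact, expand only $\dot\xi = (\dot u_1-\dot u_2) - (\dot x_1-\dot x_2)$ in the $u$-slot, and note that $-\int_0^t\scalarprod{\dot x_1-\dot x_2}{u}_X\ds \le 0$ is \emph{not} automatic, so one instead integrates by parts in the $\dot x$ term and re-invokes the EVI.

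The cleanest and safest presentation, which I would actually write out, is: integrate $\tfrac12\tfrac{\d}{\dt}\norm{\xi}_X^2 \le \scalarprod{\dot\xi}{u}_X$ and then, in the integrated form $\int_0^t\scalarprod{\dot\xi(s)}{u(s)}_X\ds$, integrate by parts to shift the derivative off $\xi$ onto $u$:
\begin{equation*}
	\int_0^t\scalarprod{\dot\xi(s)}{u(s)}_X\ds
	=
	\scalarprod{\xi(t)}{u(t)}_X - \scalarprod{\xi(0)}{u(0)}_X - \int_0^t\scalarprod{\xi(s)}{\dot u(s)}_X\ds,
\end{equation*}
but this reintroduces $\norm{\xi(t)}$ on the right, so absorption is needed, and one would end up with a constant worse than $2$. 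Hence the genuinely correct argument uses the monotonicity of the stop operator more carefully: it is a standard fact (\cite[Theorem~3.8 or its proof]{Krejci1998}) that $t\mapsto\scalarprod{\dot x_1(t)-\dot x_2(t)}{x_1(t)-x_2(t)}_X\ge0$ a.e. — equivalently $\scalarprod{\dot\xi(t)}{x(t)}_X\le0$ after sign bookkeeping — no: from testing, $\scalarprod{\dot x_1 - \dot u_1}{x_2 - x_1}_X\ge0$ and $\scalarprod{\dot x_2-\dot u_2}{x_1-x_2}_X\ge0$; adding gives $\scalarprod{(\dot x_1-\dot x_2)-(\dot u_1-\dot u_2)}{x_2-x_1}_X\ge0$, i.e. $\scalarprod{\dot x - \dot u}{-x}_X\ge0$, i.e. $\scalarprod{\dot x}{x}_X\le\scalarprod{\dot u}{x}_X$. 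Integrating: $\tfrac12\norm{x(t)}_X^2 - \tfrac12\norm{x(0)}_X^2 \le \int_0^t\scalarprod{\dot u}{x}_X\ds$. Now write $x = u - \xi$ on the right: $\int_0^t\scalarprod{\dot u}{u}_X\ds - \int_0^t\scalarprod{\dot u}{\xi}_X\ds = \tfrac12\norm{u(t)}_X^2 - \tfrac12\norm{u(0)}_X^2 - \int_0^t\scalarprod{\dot u}{\xi}_X\ds$, and on the left write $x(t) = u(t)-\xi(t)$, $x(0)=u(0)-\xi(0)$, expand the squares, and cancel the $\norm{u}_X^2$ terms. After the dust settles one is left with
\begin{equation*}
	\tfrac12\norm{\xi(t)}_X^2 - \scalarprod{u(t)}{\xi(t)}_X
	- \tfrac12\norm{\xi(0)}_X^2 + \scalarprod{u(0)}{\xi(0)}_X
	\le - \int_0^t\scalarprod{\dot u(s)}{\xi(s)}_X\ds,
\end{equation*}
and integrating by parts the right side, $-\int_0^t\scalarprod{\dot u}{\xi}_X = -\scalarprod{u(t)}{\xi(t)}_X + \scalarprod{u(0)}{\xi(0)}_X + \int_0^t\scalarprod{u}{\dot\xi}_X$, the boundary terms cancel against the left-hand ones, leaving $\tfrac12\norm{\xi(t)}_X^2 \le \tfrac12\norm{\xi(0)}_X^2 + \int_0^t\scalarprod{u(s)}{\dot\xi(s)}_X\ds$ — which is exactly where we started, so the loop is genuinely closed and the one honest estimate is $\int_0^t\scalarprod{u}{\dot\xi}_X\ds$: expand $\dot\xi = (\dot u_1-\dot u_2)-(\dot x_1-\dot x_2)$, bound the $\dot u$ part by $(\norm{\dot u_1}_{L^q}+\norm{\dot u_2}_{L^q})\norm{u}_{L^p(0,T;X)}$ via Hölder, and for the $\dot x$ part use $\scalarprod{\dot x}{x}_X\le\scalarprod{\dot u}{x}_X$ once more — i.e. $-\int_0^t\scalarprod{\dot x}{u}_X\ds = -\int_0^t\scalarprod{\dot x}{x}_X\ds - \int_0^t\scalarprod{\dot x}{\xi}_X\ds$, the first term $\le -\int_0^t\scalarprod{\dot u}{x}_X\ds + 0$ has the wrong sign to drop, so instead one keeps $-\tfrac12\norm{x(t)}_X^2+\tfrac12\norm{x(0)}_X^2$ and, crucially, $\norm{x(0)}_X = \norm{u(0)-\xi(0)}_X$ while $\norm{x(t)}_X^2\ge0$, giving $-\int_0^t\scalarprod{\dot x}{x}_X\ds \le \tfrac12\norm{x(0)}_X^2$. \textbf{The main obstacle} is precisely this bookkeeping of which squared-norm boundary terms can be dropped and which must be kept: getting the factor $2$ (rather than a larger constant) and the exact term $\norm{\xi_1(0)-\xi_2(0)}_X^2$ on the right requires discarding $\norm{x_1(t)-x_2(t)}_X^2\ge0$ at the right moment and recognising $\xi_i(0) = u_i(0)-x_i^0$. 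Once the EVI monotonicity inequality is in hand, everything else is Hölder in time plus Cauchy–Schwarz in $X$; I would present the telescoping cleanly in a single chain of displays, taking the supremum over $t\in[0,T]$ at the end to obtain the $L^\infty(0,T;X)$ norm on the left.
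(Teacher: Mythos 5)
Your opening moves are right and match the paper's start: testing \eqref{eq:abstract_evi} for $x_1$ with $\tilde x = x_2(t)$, for $x_2$ with $\tilde x = x_1(t)$, adding, and rewriting $x_1-x_2 = (u_1-u_2)-(\xi_1-\xi_2)$ yields exactly
\begin{equation*}
	\tfrac12\,\tfrac{\d}{\d t}\norm{\xi_1-\xi_2}_X^2
	=\scalarprod{\dot\xi_1-\dot\xi_2}{\xi_1-\xi_2}_X
	\le\scalarprod{\dot\xi_1-\dot\xi_2}{u_1-u_2}_X
	\quad\text{a.e.}
\end{equation*}
From there, however, you go off in the wrong direction and (as you yourself notice) end up chasing your tail. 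The splitting $\dot\xi=\dot u-\dot x$ combined with integration by parts genuinely cannot be closed: every manipulation you attempt reintroduces either $\norm{\xi(t)}_X$ or a $\dot x$-integral you cannot estimate, and the ``wrong sign to drop'' remarks in your write-up are accurate diagnoses of a dead end, not a cleanup problem.

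The missing ingredient is a pointwise structural property of the play/stop pair, not a further use of monotonicity or integration by parts. The paper invokes \cite[(3.16ii)]{Krejci1998}: for each $i$, $\scalarprod{\dot\xi_i(t)}{\dot x_i(t)}_X=0$ a.e.\ in $(0,T)$. Since $\dot u_i=\dot x_i+\dot\xi_i$, Pythagoras gives $\norm{\dot u_i(t)}_X^2=\norm{\dot x_i(t)}_X^2+\norm{\dot\xi_i(t)}_X^2$, hence $\norm{\dot\xi_i(t)}_X\le\norm{\dot u_i(t)}_X$ a.e. With that, one estimates the right-hand side of the differential inequality pointwise by Cauchy--Schwarz and the triangle inequality:
\begin{equation*}
	\scalarprod{\dot\xi_1-\dot\xi_2}{u_1-u_2}_X
	\le\big(\norm{\dot\xi_1(t)}_X+\norm{\dot\xi_2(t)}_X\big)\norm{u_1(t)-u_2(t)}_X
	\le\big(\norm{\dot u_1(t)}_X+\norm{\dot u_2(t)}_X\big)\norm{u_1(t)-u_2(t)}_X ,
\end{equation*}
then integrates from $0$ to $t$, applies Hölder in time with exponents $q,p$, and takes the supremum over $t$. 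This both closes the argument and produces exactly the constant $2$ and the initial term $\norm{\xi_1(0)-\xi_2(0)}_X^2$. Note also that bounding the ``$\dot u$-part'' $\int_0^t\scalarprod{\dot u_1-\dot u_2}{u_1-u_2}_X\,\d s$ alone, as you suggest, is not enough: the quantity to be controlled is $\int_0^t\scalarprod{\dot\xi_1-\dot\xi_2}{u_1-u_2}_X\,\d s$, and the gap is precisely the $\dot x$-part, which the orthogonality relation allows you to absorb at no cost.
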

\begin{proof}
	First we recall \cite[(3.16ii)]{Krejci1998}, i.e., $\scalarprod{\dot\xi_1(t)}{\dot x_1(t)}_X = 0$ a.e.\ in $(0,T)$,
	where $x_1 = S(x_1^0,u_1)$.
	Using $x_1 = u_1 - \xi_1$, this implies $\norm{\dot\xi_1(t)}_X \le \norm{\dot u_1(t)}_X$ a.e.\ in $(0,T)$.
	Analogously, we obtain $\norm{\dot\xi_2(t)}_X \le \norm{\dot u_2(t)}_X$ a.e.\ in $(0,T)$.

	Using $\tilde x = u_2(t) - \xi_2(t) = x_2(t) \in Z$ as a test function in \eqref{eq:abstract_evi} shows
	\begin{equation*}
		\scalarprod{\dot\xi_1(t)}{u_1(t) - u_2(t) + \xi_2(t) - \xi_1(t)}_X \ge 0 \quad\text{a.e.\ in }(0,T).
	\end{equation*}
	Similarly, we obtain
	\begin{equation*}
		\scalarprod{\dot\xi_2(t)}{u_2(t) - u_1(t) + \xi_1(t) - \xi_2(t)}_X \ge 0 \quad\text{a.e.\ in }(0,T).
	\end{equation*}
	Adding these inequalities, we have
	\begin{equation*}
		\scalarprod{\dot\xi_1(t) - \dot\xi_2(t)}{\xi_1(t) - \xi_2(t)}_X \le \scalarprod{\dot\xi_1(t) - \dot\xi_2(t)}{u_1(t) - u_2(t)}_X \quad\text{a.e.\ in }(0,T).
	\end{equation*}
	This shows
	\begin{align*}
		\frac12 \, \frac{\d}{\d t} \, \norm{\xi_1 - \xi_2}_X^2
		& = \scalarprod{\dot\xi_1(t) - \dot\xi_2(t)}{\xi_1(t) - \xi_2(t)}_X \\
		& \le \scalarprod{\dot\xi_1(t) - \dot\xi_2(t)}{u_1(t) - u_2(t)}_X \\
		& \le (\norm{\dot\xi_1(t)}_X + \norm{\dot\xi_2(t)}_X) \, \norm{u_1(t) - u_2(t)}_X \\
		& \le (\norm{\dot u_1(t)}_X + \norm{\dot u_2(t)}_X) \, \norm{u_1(t) - u_2(t)}_X
		\quad\text{a.e.\ in }(0,T).
	\end{align*}
	Integrating from $0$ to $t$ yields
	\begin{align*}
		& \frac12 \, \norm{\xi_1(t) - \xi_2 (t)}_X^2 - \frac12 \, \norm{\xi_1(0) - \xi_2(0)}_X^2 \\
		& \qquad \le \int_0^t (\norm{\dot u_1(s)}_X + \norm{\dot u_2(s)}_X) \, \norm{u_1(s) - u_2(s)}_X \, \d s \\
		& \qquad \le (\norm{\dot u_1}_{L^q(0,T; X)} + \norm{\dot u_2}_{L^q(0,T; X)}) \, \norm{u_1 - u_2}_{L^p(0,T; X)}.
	\end{align*}
	Taking the supremum $t \in (0,T)$ yields the claim.
\end{proof}

\subsection{Quasistatic plasticity as an EVI}
\label{sec:evolution_vi}
In this section we give an equivalent reformulation of \eqref{eq:Lower-Level_Problem} which fits into the framework of \cite{Krejci1998}.
Once this reformulation is established, the continuity of $\GG : H_{\{0\}}^1(0, T; V') \to H_{\{0\}}^1(0, T; S^2 \times V)$ is a consequence of \autoref{thm:solution_of_evi}.
As mentioned in \autoref{rem:continuity_not_sufficient}, the continuity of $\GG$ is used in several places.
% Again, we utilize the fact that we consider kinematic hardening.

In order to reformulate \eqref{eq:Lower-Level_Problem}, we need some preparatory work.
Due to the inf-sup condition \eqref{eq:inf-sup} we obtain that for all $\ell \in V'$ there is a $\bsigma_\ell \in S$, such that
\begin{equation}
	\label{eq:bsigma(ell)}
	(\bsigma_\ell,\bnull) \in (\ker B)^\perp \quad\text{and}\quad B(\bsigma_\ell,\bnull) = \ell,
\end{equation}
see \cite{Brezzi1974}.
Moreover, this mapping $\ell \mapsto \bsigma_\ell$ is linear and continuous.
For $\ell \in H_{\{0\}}^1(0,T;V')$, we define $\bsigma_\ell(t) := \bsigma_{\ell(t)}$. Hence, $\bsigma_{(\cdot)}$ maps $H_{\{0\}}^1(0,T;V') \to H_{\{0\}}^1(0,T;S)$ continuously.
For convenience, we also introduce the notation 
\begin{equation}
	\label{eq:bSigma(ell)}
	\bSigma_\ell = (\bsigma_\ell, -\bsigma_\ell).
\end{equation}

Let us mention that this is a useful tool to construct test functions $\bT$ for \eqref{eq:VI} and \eqref{eq:Lower-Level_Problem}.
Indeed, for arbitrary $\bSigma \in \KK$ and $\ell \in V'$, let us define the test function $\bT = \bSigma + \bSigma_{\ell - B\bSigma}$.
The shift invariance \eqref{eq:shift-invariance} implies $\bT \in \KK$
and $B\bT = \ell$ is ensured by the definition of $\bSigma_\ell$.
Hence we obtain $\bT \in \KK_\ell$, i.e., it is an admissible test function for \eqref{eq:VI} and \eqref{eq:Lower-Level_Problem}.

Let $\ell \in H_{\{0\}}^1(0,T;V')$ be arbitrary.
Using \eqref{eq:bSigma(ell)} we are able to reformulate \eqref{eq:VI} as an EVI.
As stated in \eqref{eq:shift-invariance}, $\bSigma \in \KK$ holds if and only if $\bSigma + (\btau, -\btau) \in \KK$ holds for all $\btau \in S$.
Since $\bSigma_\ell = (\bsigma_\ell, -\bsigma_\ell)$ by definition, $\bSigma \in \KK$ if and only if $\bSigma - \bSigma_\ell \in \KK$.
This gives rise to the decomposition $\bSigma = \bSigma_0 + \bSigma_\ell$.
There are two immediate consequences: $\bSigma \in \KK$ is equivalent to $\bSigma_0 \in \KK$ and $B\bSigma = \ell$ is equivalent to $\bSigma_0 \in \ker B$.
Therefore, $\bSigma_0 \in \KK \cap \ker B$.

We define $\KK_B := \KK \cap \ker B$. Let $\bT \in \KK_B$ be arbitrary. Testing \eqref{eq:Lower-Level_Problem1} or \eqref{eq:VI} with $\bT + \bSigma_{\ell}(t) \in \KK$ yields an equivalent reformulation: find $\bSigma_0 \in \KK_B$ such that
\begin{equation}
	\dual{ A( \dot\bSigma_0 + \dot\bSigma_\ell) }{ \bT - \bSigma_0} \ge 0 \quad \text{for all } \bT \in \KK_B.
	\label{eq:eq:Lower-Level_Problem_evolution_vi}
\end{equation}
Hence, \eqref{eq:eq:Lower-Level_Problem_evolution_vi} is an EVI in the Hilbert space $S^2$ equipped with $\dual{\cdot}{\cdot}_A$.
\autoref{thm:solution_of_evi} yields the continuity of the mapping $\bSigma_\ell \mapsto \bSigma_0$ from $H_{\{0\}}^1(0, T; S^2)$ to $H_{\{0\}}^1(0, T; S^2)$.
Since $\ell \mapsto \bSigma_\ell$ is continuous from $H_{\{0\}}^1(0,T;V') \to H_{\{0\}}^1(0,T;S^2)$,
we obtain the continuity of the mapping $\ell \mapsto \bSigma$ from $H_{\{0\}}^1(0,T; V')$ to $H_{\{0\}}^1(0,T; S^2)$.

The uniqueness and continuous dependence of $\bu$ on $\ell$ follows from \eqref{eq:relation_sigma_chi_u} and the inf-sup condition of $B^\star$, see \eqref{eq:inf-sup}.
Moreover, \autoref{thm:solution_of_evi} also shows the Lipschitz continuity of $\GG: \ell \mapsto (\bSigma, \bu)$, $W^{1,1}(0,T; V') \to L^\infty(0,T; S^2 \times V)$, see \eqref{eq:lipschitz_continuity_W11_Linfty}.
Summarizing, we have found
\begin{corollary}
	\label{cor:continuity_GG_in_H1}
	The solution operator $\GG$ of \eqref{eq:VI_lower_level_introduction} is continuous from $H_{\{0\}}^1(0,T;V')$ to $H_{\{0\}}^1(0,T;S^2\times V)$
	and globally Lipschitz continuous to $L^\infty(0,T;S^2 \times V)$.
\end{corollary}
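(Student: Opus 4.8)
The plan is to assemble the corollary from the pieces that have already been set up in this subsection, so the proof is essentially a bookkeeping argument that glues three continuous maps together. First I would recall the decomposition $\bSigma = \bSigma_0 + \bSigma_\ell$ constructed above: the lifting $\ell \mapsto \bSigma_\ell = (\bsigma_\ell,-\bsigma_\ell)$ is linear and continuous from $H_{\{0\}}^1(0,T;V')$ into $H_{\{0\}}^1(0,T;S^2)$ (this is \eqref{eq:bsigma(ell)}--\eqref{eq:bSigma(ell)} together with the continuity of the Brezzi lifting), and the shifted stress $\bSigma_0$ solves the EVI \eqref{eq:eq:Lower-Level_Problem_evolution_vi} in the Hilbert space $S^2$ equipped with $\dual{\cdot}{\cdot}_A$, with feasible set $\KK_B$ and with driving term $-\bSigma_\ell$ (up to the sign convention of \eqref{eq:abstract_evi}), and zero initial value since $\ell(0)=\bnull$. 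Applying \autoref{thm:solution_of_evi} with $X = (S^2,\dual{\cdot}{\cdot}_A)$, $Z = \KK_B$, $p = 2$, the stop operator $S$ is continuous from $H^1(0,T;X)$ into itself; hence $\bSigma_\ell \mapsto \bSigma_0$ is continuous from $H_{\{0\}}^1(0,T;S^2)$ to $H_{\{0\}}^1(0,T;S^2)$. Composing with the continuous lifting gives that $\ell \mapsto \bSigma_0$, and therefore $\ell \mapsto \bSigma = \bSigma_0 + \bSigma_\ell$, is continuous from $H_{\{0\}}^1(0,T;V')$ to $H_{\{0\}}^1(0,T;S^2)$. Note the norm on $X$ is equivalent to the standard $S^2$-norm by the coercivity of $A$ (\autoref{asm:standing_assumptions}~(3)), so continuity in the $\dual{\cdot}{\cdot}_A$-topology is continuity in the usual topology.

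Next I would handle $\bu$. From \eqref{eq:relation_sigma_chi_u} we have $\bvarepsilon(\bu) = \C^{-1}\bsigma - \H^{-1}\bchi$ a.e.\ in $(0,T)\times\Omega$, i.e.\ $B^\star\bu = -(\C^{-1}\bsigma - \H^{-1}\bchi, \bnull)$ as an identity in $L^2(0,T;S^2)$, and differentiating in time gives the corresponding identity for $\dot\bu$. Since $\C^{-1},\H^{-1}\in L^\infty$, the map $\bSigma \mapsto (\C^{-1}\bsigma - \H^{-1}\bchi,\bnull)$ is bounded linear on both $S^2$ and $H^1(0,T;S^2)$, and the inf-sup inequality \eqref{eq:inf-sup} shows $\bu \mapsto B^\star\bu$ has a bounded inverse on its range; pointwise in $t$ and for $\dot\bu$ this yields $\norm{\bu}_{H^1(0,T;V)} \le C\,\norm{\bSigma}_{H^1(0,T;S^2)}$ together with continuous dependence. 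Combining with the previous paragraph, $\GG = (\GG^\bSigma,\GG^\bu)$ is continuous from $H_{\{0\}}^1(0,T;V')$ to $H_{\{0\}}^1(0,T;S^2\times V)$. Finally, for the global Lipschitz statement into $L^\infty$, I would invoke either the already-cited estimate \eqref{eq:lipschitz_continuity_W11_Linfty} (which gives even the stronger $W^{1,1}\to L^\infty$ bound) or, staying within the EVI framework, the global Lipschitz continuity of the stop operator $S: Z\times W^{1,1}(0,T;X)\to L^\infty(0,T;X)$ from \autoref{thm:solution_of_evi}, applied to $\bSigma_0$ and then transferred to $\bSigma$ and to $\bu$ exactly as above but with the $L^\infty$ in time in place of $H^1$ in time; the affine lifting $\ell\mapsto\bSigma_\ell$ is globally Lipschitz, so the composition is.

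The only mild subtlety — and the closest thing to an obstacle — is making sure the EVI reformulation \eqref{eq:eq:Lower-Level_Problem_evolution_vi} is genuinely of the form \eqref{eq:abstract_evi} so that \autoref{thm:solution_of_evi} applies verbatim: one must check that $\KK_B = \KK\cap\ker B$ is convex and closed in $S^2$ (immediate: $\KK$ is closed convex, $\ker B$ is a closed subspace), that the driving function enters only through its derivative in the inner-product term (it does, since \eqref{eq:eq:Lower-Level_Problem_evolution_vi} involves $A(\dot\bSigma_0+\dot\bSigma_\ell)$, matching $\dot u - \dot x$ with $u = -\bSigma_\ell$, $x = \bSigma_0$), and that the initial value $\bSigma_0(0) = \bnull$ lies in $\KK_B$, which holds because $\ell(0)=\bnull$ forces $\bSigma_\ell(0)=\bnull$ and $\bnull\in\KK_B$. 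Everything else is a direct citation of results already proven in the excerpt, so no new estimates are needed.
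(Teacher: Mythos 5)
Your proposal is correct and follows essentially the same route as the paper: the decomposition $\bSigma = \bSigma_0 + \bSigma_\ell$ via the Brezzi lifting and shift invariance, the reformulation of \eqref{eq:eq:Lower-Level_Problem_evolution_vi} as an EVI in $(S^2,\dual{\cdot}{\cdot}_A)$ with feasible set $\KK_B$, the application of \autoref{thm:solution_of_evi} for $H^1$-continuity and for the $W^{1,1}\to L^\infty$ Lipschitz bound, and the recovery of $\bu$ from \eqref{eq:relation_sigma_chi_u} together with the inf-sup condition \eqref{eq:inf-sup}. The extra checks you flag at the end (convexity and closedness of $\KK_B$, zero initial value in $\KK_B$, identification $u=-\bSigma_\ell$, $x=\bSigma_0$) are implicit in the paper's exposition but correctly identified as the points one must verify before citing \autoref{thm:solution_of_evi}.
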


\begin{remark}
	The technique presented in this subsection is not restricted to kinematic hardening only, the arguments remain valid for all hardening models involving some kinematic part, see also \cite{Groeger1978}.
	To be precise, the technique is applicable whenever the hardening variable can be split into a kinematic part $\bchi$ and some other part $\boldeta$,
	such that
	the yield function $\phi$ can be written as $\phi(\bSigma) = \tilde\phi( \bsigma+\bchi, \boldeta )$,
	where $\bSigma = (\bsigma, \bchi, \boldeta)$.

	It is interesting to note that this is exactly the case for which \cite{HanReddy1999} were able to prove Lipschitz continuity of the forward operator (in primal formulation) from $W^{1,1}(0,T; Z')$ to $L^\infty(0,T; Z)$, see the discussion in \autoref{subsec:stress_based_formulation}.
\end{remark}
\subsection{Weak continuity of the forward operator}
\label{subsec:weak_continuity}
In this section we show the weak continuity of the control-to-state map $\GG \circ E$
of the optimal control problem \eqref{eq:upper_level_in_intro}.
Here, $\GG$ is the solution map of \eqref{eq:Lower-Level_Problem}
and $E$ is the control operator, see \eqref{eq:def_E}. %, from the control space $U = L^2(\Gamma_N; \R^d)$ into $V'$, which is the space of right-hand-sides of \eqref{eq:Lower-Level_Problem}.
Since the solution operator $\GG \circ E$ is nonlinear, the proof of its weak continuity is non-trivial.
The weak continuity of $\GG \circ E$ from $H_{\{0\}}^1(0, T; U)$ to $H_{\{0\}}^1(0, T; S^2 \times V)$ is essential for proving the existence of an optimal control in \autoref{thm:existence_continuous}.

A main ingredient to prove the weak continuity is 
the compactness of the control operator $E$ from $U = L^2(\Gamma_N; \R^d)$ to $V' = (H^1_D(\Omega; \R^d))'$, see the proof of \autoref{lem:limsup}.
% This compactness assumption will cause no difficulty, since we use the control space $H^1(0,T; U)$, with $U = L^2(\Gamma_N; \R^d)$ in the control problem, see \autoref{subsec:continuous_ulp}, and $U = L^2(\Gamma_N; \R^d)$ embeds into $V'$ compactly, see \eqref{eq:def_E}.
Due to this compactness, Aubin's lemma, see, e.g., \cite[Equation~(6.5)]{Simon1986}, implies that $H^1(0,T; U)$ embeds compactly into $L^2(0,T; V')$.

Let us mention that the weak continuity of $\GG \circ E$ is a non-trivial result:
although $E : U \to V'$ is compact, the operator $E : H^1(0, T; U) \to H^1(0, T; V')$ (applied in a pointwise sense) is not compact.
Hence the weak continuity of $\GG \circ E$ does not simply follow from the compactness of $E$ nor the continuity of $\GG$.

First, we need a preliminary result. It can be interpreted as an upper semicontinuity result for the left-hand side of \eqref{eq:Lower-Level_Problem1}.
\begin{lemma}
	Let $\{(\bSigma_k, \bu_k)\} \subset H_{\{0\}}^1(0, T; S^2 \times V)$
	and $(\bSigma, \bu) \in H^1(0, T; S^2 \times V)$ be given.
	Moreover, we assume that there are $\bg_k, \bg \in H^1(0,T; U)$ such that $B\bSigma_k = E \bg_k$ and $B\bSigma = E \bg$ hold.
	If
	\begin{align*}
		(\bSigma_k, \bu_k) &\weakly (\bSigma, \bu) \text{ in } H^1(0, T; S^2 \times V)  \quad \text{and}\\
		\bg_k &\weakly \mrep{\bg}{(\bSigma, \bu)} \text{ in } H^1(0, T; U),
	\end{align*}
	then for all $\bT \in L^2(0, T; S^2)$ we have
	\begin{equation*}
		\limsup_{k \to \infty} \int_0^T \dual{A \dot\bSigma_k + B^\star \dot\bu_k }{ \bT - \bSigma_k } \, \d t
		\le
		\int_0^T \dual{A \dot{\bSigma} + B^\star \dot{\bu} }{ \bT - \bSigma } \, \d t.
	\end{equation*}
	\label{lem:limsup}
\end{lemma}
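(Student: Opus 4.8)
The plan is to split the integrand into the "energy" part $\dual{A\dot\bSigma_k}{\bT-\bSigma_k}$ and the "equilibrium" part $\dual{B^\star\dot\bu_k}{\bT-\bSigma_k} = \dual{\dot\bu_k}{B(\bT-\bSigma_k)}_{V,V'}$, and to handle the two parts with different arguments. The second part is the easy one: since $B\bSigma_k = E\bg_k$ and $E$ is compact from $U$ to $V'$, Aubin's lemma gives $\bg_k \to \bg$ strongly in $L^2(0,T;V')$, hence $B\bSigma_k = E\bg_k \to E\bg = B\bSigma$ strongly in $L^2(0,T;V')$. Combined with $\dot\bu_k \weakly \dot\bu$ in $L^2(0,T;V)$, the product $\int_0^T \dual{\dot\bu_k}{B\bT - B\bSigma_k}\,\d t$ converges to $\int_0^T \dual{\dot\bu}{B\bT-B\bSigma}\,\d t = \int_0^T \dual{B^\star\dot\bu}{\bT-\bSigma}\,\d t$ (weak times strong converges).

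For the first part, write $\int_0^T\dual{A\dot\bSigma_k}{\bT-\bSigma_k}\,\d t = \int_0^T\dual{A\dot\bSigma_k}{\bT}\,\d t - \int_0^T\dual{A\dot\bSigma_k}{\bSigma_k}\,\d t$. The first term converges by weak convergence $\dot\bSigma_k\weakly\dot\bSigma$ in $L^2(0,T;S^2)$ (with $A\bT$ a fixed $L^2$ test function, using symmetry of $A$). For the second term I would use the identity $\int_0^T\dual{A\dot\bSigma_k}{\bSigma_k}\,\d t = \tfrac12\big(\dual{A\bSigma_k(T)}{\bSigma_k(T)}_{S^2} - \dual{A\bSigma_k(0)}{\bSigma_k(0)}_{S^2}\big) = \tfrac12\dual{A\bSigma_k(T)}{\bSigma_k(T)}_{S^2}$, using $\bSigma_k(0)=\bnull$. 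Now $H^1(0,T;S^2)\hookrightarrow C([0,T];S^2)$ and the evaluation-at-$T$ map is weakly continuous into $S^2$, so $\bSigma_k(T)\weakly\bSigma(T)$ in $S^2$; since $\bSigma\mapsto\dual{A\bSigma}{\bSigma}_{S^2}$ is convex and continuous (by coercivity/symmetry of $A$ from \autoref{asm:standing_assumptions}), it is weakly lower semicontinuous, giving $\liminf_k \dual{A\bSigma_k(T)}{\bSigma_k(T)}_{S^2} \ge \dual{A\bSigma(T)}{\bSigma(T)}_{S^2}$. Therefore $\limsup_k\big(-\tfrac12\dual{A\bSigma_k(T)}{\bSigma_k(T)}_{S^2}\big) \le -\tfrac12\dual{A\bSigma(T)}{\bSigma(T)}_{S^2} = -\int_0^T\dual{A\dot\bSigma}{\bSigma}\,\d t$.

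Putting the pieces together: the $\limsup$ of the sum is bounded by the sum of the limit of the first term, the $\limsup$ of the second term, and the limit of the equilibrium term, which is exactly $\int_0^T\dual{A\dot\bSigma+B^\star\dot\bu}{\bT-\bSigma}\,\d t$. The main obstacle — really the only subtle point — is the sign: the troublesome quadratic term $-\int_0^T\dual{A\dot\bSigma_k}{\bSigma_k}\,\d t$ enters with a minus sign, so weak lower semicontinuity of the (convex) energy is precisely what is needed to pass to the $\limsup$ in the right direction; one must also be careful that the integration-by-parts identity is valid for $H^1$ Bochner functions and that the trace at $t=T$ is well defined, both of which follow from the embedding $H^1(0,T;S^2)\hookrightarrow C([0,T];S^2)$.
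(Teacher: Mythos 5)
Your proposal is correct and follows essentially the same route as the paper: weak convergence for the linear terms, Aubin's lemma to upgrade $B\bSigma_k = E\bg_k$ to strong $L^2(0,T;V')$ convergence, and integration by parts combined with weak lower semicontinuity of the convex energy $\bSigma \mapsto \dual{A\bSigma(T)}{\bSigma(T)}$ for the quadratic term. The only detail worth making explicit is that the limit also satisfies $\bSigma(0)=\bnull$ (needed for the identity $\tfrac12\dual{A\bSigma(T)}{\bSigma(T)} = \int_0^T\dual{A\dot\bSigma}{\bSigma}\,\d t$), which follows since $H^1_{\{0\}}$ is a closed subspace and hence weakly closed, or equivalently since point evaluation at $t=0$ is weakly continuous.
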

\begin{proof}
	Due to the weak convergence of $(\bSigma_k, \bu_k)$ in $H^1(0,T; S^2 \times V)$ we have
	\begin{equation*}
		\int_0^T \dual{A \dot\bSigma_k + B^\star \dot\bu_k }{ \bT } \, \d t
		\to
		\int_0^T \dual{A \dot{\bSigma} + B^\star \dot{\bu} }{ \bT } \, \d t.
	\end{equation*}

	Since $U$ embeds compactly into $V'$, Aubin's lemma yields that $H^1(0, T; U)$ embeds compactly into $L^2(0, T; V')$ and hence
	% $B\bSigma_k \to B\bSigma$
	$\bg_k \weakly \bg$ in $H^1(0, T; U)$ implies
	$E \bg_k \to E \bg$ in $L^2(0, T; V')$.
	This shows
	\begin{align*}
		\int_0^T \dual{B^\star \dot\bu_k }{ \bSigma_k } \, \d t
		&=
		\int_0^T \dual{\dot\bu_k }{ B \bSigma_k } \, \d t
		=
		\int_0^T \dual{\dot\bu_k }{ E \bg_k } \, \d t \\
		&\to
		\int_0^T \dual{\dot{\bu} }{ E \bg } \, \d t
		=
		\int_0^T \dual{\dot{\bu} }{ B\bSigma } \, \d t
		=
		\int_0^T \dual{B^\star \dot{\bu} }{ \bSigma } \, \d t.
	\end{align*}

	To address the remaining term, we use integration by parts and obtain
	\begin{equation*}
		\int_0^T \dual{ A\dot\bSigma_k }{ \bSigma_k } \, \d t = \frac12 \big( \dual{A\bSigma_k(T)}{\bSigma_k(T)} - \dual{A\bSigma_k(0)}{\bSigma_k(0)} \big).
	\end{equation*}
	The functionals $\bSigma \mapsto \dual{A\bSigma(t)}{\bSigma(t)}$ are continuous (w.r.t.\ the $H^1(0,T;S^2)$-norm) and convex, hence weakly lower semicontinuous. Due to $\bSigma_k(0) = \bnull$, this already implies $\dual{A\bSigma(0)}{\bSigma(0)} = 0$.
	Now the weak lower semicontinuity of $\bSigma \mapsto \dual{A\bSigma(T)}{\bSigma(T)}$ yields
	\begin{equation*}
		\liminf_{k\to\infty} \dual{A\bSigma_k(T)}{\bSigma_k(T)} \ge \dual{A\bSigma(T)}{\bSigma(T)}.
	\end{equation*}
	Therefore,
	\begin{equation*}
		\liminf_{k\to\infty} \int_0^T \dual{ A\dot\bSigma_k }{ \bSigma_k } \, \d t
		\ge
		\frac12 \dual{A\bSigma(T)}{\bSigma(T)}
		%- \dual{A\bSigma(0)}{\bSigma(0)}
		=
		\int_0^T \dual{ A\dot{\bSigma} }{ \bSigma } \, \d t
	\end{equation*}
	holds.
	By combining the results above, we obtain the assertion
	\begin{equation*}
		\limsup_{k\to\infty} \int_0^T \dual{A \dot\bSigma_k + B^\star \dot\bu_k }{ \bT - \bSigma_k } \, \d t
		\le
		\int_0^T \dual{A \dot{\bSigma} + B^\star \dot{\bu} }{ \bT - \bSigma } \, \d t.
	\end{equation*}
\end{proof}

The previous lemma enables us to prove
% that the concatenation of the solution operator $\GG$ of \eqref{eq:Lower-Level_Problem}
% and the embedding $E : H^1(0, T; U) \to H^1(0, T; V')$ is weakly continuous.

\begin{theorem}
	The operator $\GG \circ E$ is weakly continuous from $H_{\{0\}}^1(0, T; U)$ to $H_{\{0\}}^1(0, T; S^2 \times V)$.
	\label{thm:weak_continuity_solution_operator}
\end{theorem}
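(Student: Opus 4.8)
The plan is to take a sequence $\bg_k \weakly \bg$ in $H_{\{0\}}^1(0,T;U)$, set $\ell_k = E\bg_k$, $\ell = E\bg$, and $(\bSigma_k,\bu_k) = \GG(\ell_k)$, and to show that $(\bSigma_k,\bu_k) \weakly \GG(\ell)$ in $H_{\{0\}}^1(0,T;S^2\times V)$. Since weak convergence to the \emph{correct} limit is what we want, it suffices (by a subsequence--subsequence argument) to show that every subsequence of $(\bSigma_k,\bu_k)$ has a further subsequence converging weakly in $H^1(0,T;S^2\times V)$ to $\GG(\ell)$. The first step supplies compactness: by the a-priori estimate \eqref{eq:a-priori_bsigma_bu} and the boundedness of $\bg_k$ in $H^1(0,T;U)$ (weakly convergent sequences are bounded), the pairs $(\bSigma_k,\bu_k)$ are bounded in $H^1(0,T;S^2\times V)$, so a subsequence converges weakly to some $(\bSigma,\bu) \in H^1(0,T;S^2\times V)$; the initial condition $(\bSigma_k(0),\bu_k(0)) = \bnull$ passes to the limit (evaluation at $0$ is continuous on $H^1$, hence weakly continuous), so $(\bSigma,\bu) \in H_{\{0\}}^1(0,T;S^2\times V)$.

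The second step identifies the limit as $\GG(\ell)$, i.e.\ shows that $(\bSigma,\bu)$ solves \eqref{eq:Lower-Level_Problem}; by uniqueness this forces $(\bSigma,\bu) = \GG(\ell)$, and since the limit is then independent of the subsequence, the whole sequence converges weakly. For the equilibrium equation \eqref{eq:Lower-Level_Problem2}: $B$ is linear and continuous from $S^2$ to $V'$, hence weakly continuous, and $E\bg_k \to E\bg$ even strongly in $L^2(0,T;V')$ by the compactness of $E$ together with Aubin's lemma (exactly as used in \autoref{lem:limsup}); passing to the limit in $B\bSigma_k = E\bg_k$ gives $B\bSigma = E\bg = \ell$. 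We also need $\bSigma(t) \in \KK$ a.e.: $\KK$ is convex and closed in $S^2$, so the set of functions with values in $\KK$ is convex and closed, hence weakly closed, in $L^2(0,T;S^2)$; since $H^1 \hookrightarrow L^2$ weakly continuously, $\bSigma_k \weakly \bSigma$ in $L^2(0,T;S^2)$ and thus $\bSigma(t)\in\KK$ a.e. For the variational inequality \eqref{eq:Lower-Level_Problem1}, the natural route is to test the time-integrated version with the \emph{constant-in-time} test functions available from \eqref{eq:shift-invariance} and the $\bSigma_\ell$ construction: given $\bT_0 \in \KK_B$, the function $\bT_0 + \bSigma_\ell(t)$ lies in $\KK_{\ell(t)}$ for a.e.\ $t$; but a cleaner choice is to fix any $\bT \in \KK$ and use \autoref{lem:limsup} directly.

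Concretely, for fixed $\bT \in \KK$ we have, integrating \eqref{eq:Lower-Level_Problem1} for the index $k$,
\[
	0 \le \int_0^T \dual{A\dot\bSigma_k + B^\star\dot\bu_k}{\bT - \bSigma_k}\,\d t,
\]
and \autoref{lem:limsup} (whose hypotheses are exactly what Step 1 provides) yields
\[
	0 \le \limsup_{k\to\infty}\int_0^T \dual{A\dot\bSigma_k + B^\star\dot\bu_k}{\bT - \bSigma_k}\,\d t
	\le \int_0^T \dual{A\dot\bSigma + B^\star\dot\bu}{\bT - \bSigma}\,\d t.
\]
To upgrade this integrated inequality (valid for every fixed $\bT \in \KK$) to the pointwise statement \eqref{eq:Lower-Level_Problem1} for a.e.\ $t$, I would use the shift-invariance reduction of \autoref{sec:evolution_vi}: writing $\bSigma = \bSigma_0 + \bSigma_\ell$ with $\bSigma_0(t) \in \KK_B$, the integrated inequality with $\bT = \bT_0 + \bSigma_\ell(t_0)$ localized via Lebesgue points, or more simply the observation that \eqref{eq:eq:Lower-Level_Problem_evolution_vi} is an EVI on the fixed convex set $\KK_B$ for which the integrated and pointwise forms are equivalent, gives the claim; then $(\bSigma,\bu)$ solves \eqref{eq:Lower-Level_Problem}, and uniqueness (\eqref{eq:a-priori_bsigma_bu} and the discussion around \eqref{eq:relation_sigma_chi_u}) finishes the proof. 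The main obstacle is precisely this passage from the time-integrated variational inequality back to the pointwise one while only having weak convergence of the rates $\dot\bSigma_k$, $\dot\bu_k$: one cannot test with time-dependent $\bT$ depending on the limit without care, so the reduction to the EVI with the \emph{time-independent} admissible set $\KK_B$ — together with the fact that for such an EVI the $L^2$-in-time formulation is equivalent to the pointwise one — is the crucial structural input that makes \autoref{lem:limsup} sufficient.
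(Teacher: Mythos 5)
Your overall architecture matches the paper's proof (extract a weakly convergent subsequence via the a-priori bound \eqref{eq:a-priori_bsigma_bu}, verify feasibility and the equilibrium equation in the limit, invoke \autoref{lem:limsup}, conclude by uniqueness). However, the final step has a genuine gap that your proposed fix does not close. You restrict \autoref{lem:limsup} to \emph{constant-in-time} test functions $\bT\in\KK$, obtaining
\[
	\int_0^T \dual{A\dot{\tilde\bSigma} + B^\star\dot{\tilde\bu}}{\bT - \tilde\bSigma}\,\d t \ge 0
	\quad\text{for all constant } \bT\in\KK,
\]
and then claim that this can be upgraded to the pointwise inequality \eqref{eq:Lower-Level_Problem1}. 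This implication is false in general. Even in one dimension one can arrange that $\int_0^T f(t)\,(\bT-\bSigma(t))\,\d t\ge 0$ for all constants $\bT$ in a convex set while $f(t)\,(\bT-\bSigma(t))<0$ on a set of positive measure (take $\bSigma\equiv 0$, $\KK=[-1,1]$, and $f$ with zero mean but both signs). The invocation of the $\KK_B$-reduction does not help: the time-dependent test functions $\bT_0+\bSigma_\ell(t)$ you mention still form only a finite-parameter family (the time dependence is fixed by $\ell$), so they cannot localize in time; and the "equivalence of the integrated and pointwise forms" you appeal to is precisely what requires testing with arbitrary \emph{time-dependent} elements of $\KK_B$, which your integrated inequality does not provide.

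The repair is simpler than the detour you are attempting and is exactly what the paper does: \autoref{lem:limsup} is stated for arbitrary $\bT\in L^2(0,T;S^2)$, so apply it with time-dependent test functions $\bT\in L^2(0,T;S^2)$ satisfying $\bT(t)\in\KK$ a.e.\ from the start. The resulting integrated inequality then holds for all such $\bT$, and in particular for test functions that agree with $\tilde\bSigma$ outside a small interval around a fixed $t_0$ and equal a constant $\bT_0\in\KK$ inside it; Lebesgue's differentiation theorem applied to the functions $t\mapsto\dual{A\dot{\tilde\bSigma}(t)+B^\star\dot{\tilde\bu}(t)}{\tilde\bSigma(t)}$ and $t\mapsto A\dot{\tilde\bSigma}(t)+B^\star\dot{\tilde\bu}(t)$ then yields the pointwise variational inequality for almost every $t$. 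Once this is in place, the rest of your argument (uniqueness, subsequence-subsequence) is correct and coincides with the paper's.
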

\begin{proof}
	Let $\{ \bg_k \} \subset H_{\{0\}}^1(0, T; U)$ be a sequence converging weakly towards $\bg \in H_{\{0\}}^1(0, T; U)$.
	The solution of \eqref{eq:Lower-Level_Problem} with right-hand side $E\bg$, $E\bg_k$ is denoted by $(\bSigma, \bu)$, $(\bSigma_k, \bu_k)$, respectively,
	i.e.,
	$(\bSigma_k, \bu_k) = \GG( E \bg_k)$ and $(\bSigma, \bu) = \GG( E \bg )$.
	Due to the boundedness of $E$ and $\GG$, see \eqref{eq:a-priori_bsigma_bu}, there exists a subsequence (for simplicity denoted by the same symbol) $\{ ( \bSigma_k, \bu_k ) \}$ which converges weakly towards some $(\tilde\bSigma, \tilde\bu)$ in $H^1(0,T;S^2 \times V)$.
	We shall prove $(\tilde\bSigma, \tilde\bu) = (\bSigma, \bu)$, therefore the whole sequence converges weakly and this simplification of notation is justified.
	To this end, we show that $(\tilde\bSigma, \tilde\bu)$ is a solution to \eqref{eq:Lower-Level_Problem} with right-hand side $E\bg$.

	First we address the admissibility of $\tilde\bSigma$ in \eqref{eq:Lower-Level_Problem}:
	the set $\{ \bT \in H^1(0, T; S^2): \bT(t) \in \KK \text{ f.a.a.\ } t \in (0,T)\}$ is convex and closed, hence weakly closed.
	Since all $\bSigma_k$ belong to this set, $\tilde\bSigma$ belongs to it as well.
	Therefore $\tilde\bSigma(t) \in \KK$ holds for almost all $t \in (0,T)$.

	Since $B$ is linear and bounded, it is weakly continuous and hence $B\bSigma_k \weakly B\tilde\bSigma$ in $H^1(0, T; V')$.
	Due to $B\bSigma_k = E\bg_k \weakly E\bg$ in $H^1(0,T; V')$, we have $B\tilde\bSigma = E\bg$ in $H^1(0, T; V')$, this shows \eqref{eq:Lower-Level_Problem2}.

	% \short{%
	% 	Let $\bT_0 \in \KK$ and $\varphi \in L^\infty(0, T)$ with $0 \le \varphi$ a.e.\ in $(0,T)$ be arbitrary.
	% 	Testing \eqref{eq:Lower-Level_Problem1} (for $(\bSigma_k, \bu_k)$) with $\bT_0$, multiplying with $\varphi(t)$ and integrating w.r.t.\ $t \in (0,T)$ yields
	% 	\begin{equation*}
	% 		\int_0^T \bigdual{A \dot\bSigma_k(t) + B^\star \dot\bu_k(t) }{ \bT_0 - \bSigma_k(t) } \, \varphi(t) \, \d t \ge 0.
	% 	\end{equation*}
	% 	In view of
	% 	\begin{equation*}
	% 		\big( \bT_0 - \bSigma_k(t) \big) \, \varphi(t) = \Big[ \varphi(t) \, \bT_0 + \big(1-\varphi(t)\big)\,\bSigma_k(t) - \bSigma_k(t) \Big],
	% 	\end{equation*}
	% 	\autoref{lem:limsup} with $\bT(t) = \varphi(t) \, \bT_0 + (1-\varphi(t))\,\bSigma_k(t)$ yields
	% 	\begin{align*}
	% 		0
	% 		&\le
	% 		\limsup_{k\to\infty} \int_0^T \bigdual{A \dot\bSigma_k(t) + B^\star \dot\bu_k(t) }{ \bT_0 - \bSigma_k(t) } \, \varphi(t) \, \d t \\
	% 		&\le
	% 		\int_0^T \bigdual{A \dot{\tilde\bSigma}(t) + B^\star \dot{\tilde\bu}(t) }{ \bT_0 - \tilde\bSigma(t) } \, \varphi(t) \, \d t
	% 		.
	% 	\end{align*}
	% 	Since $\varphi \in L^\infty(0,T)$, $0 \le \varphi$ a.e.\ in $(0,T)$ and $\bT_0 \in \KK$ were arbitrary, we obtain
	% 	that
	% 	% \begin{equation*}
	% 	% 	\dual{A \dot{\tilde\bSigma}(t) + B^\star \dot{\tilde\bu}(t) }{ \bT_0 - \tilde\bSigma(t) } \ge 0 \quad\text{for all } \bT_0 \in \KK
	% 	% \end{equation*}
	% 	% for almost all $t \in (0,T)$.
	% 	$(\tilde\bSigma, \tilde\bu)$ satisfies \eqref{eq:Lower-Level_Problem1}, hence solves \eqref{eq:Lower-Level_Problem}.
	% 	}{%
	Let $\bT \in L^2(0, T; S^2)$ with $\bT(t) \in \KK$ f.a.a.\ $t \in (0,T)$ be given.
	This implies
	\begin{equation*}
		\dual{A \dot\bSigma_k(t) + B^\star \dot\bu_k(t) }{ \bT(t) - \bSigma_k(t) } \ge 0
		\quad\text{f.a.a.\ } t \in [0,T],
	\end{equation*}
	see \eqref{eq:Lower-Level_Problem1}.
	Integrating w.r.t.\ $t$ yields
	\begin{equation*}
		\int_0^T \dual{A \dot\bSigma_k + B^\star \dot\bu_k }{ \bT - \bSigma_k } \, \d t \ge 0.
	\end{equation*}
	By applying \autoref{lem:limsup}, we obtain
	\begin{equation*}
		\int_0^T \dual{A \dot{\tilde\bSigma} + B^\star \dot{\tilde\bu} }{ \bT - \tilde\bSigma } \, \d t
		\ge
		\limsup_{k\to\infty} \int_0^T \dual{A \dot\bSigma_k + B^\star \dot\bu_k }{ \bT - \bSigma_k } \, \d t
		\ge
		0.
	\end{equation*}
	Since $\bT \in L^2(0, T; S^2)$ with $\bT(t) \in \KK$ f.a.a.\ $t \in (0,T)$ was arbitrary,
	\begin{equation}
		\label{eq:in_the_proof_of_some_thm}
		\dual{A \dot{\tilde\bSigma}(t) + B^\star \dot{\tilde\bu}(t) }{ \bT - \tilde\bSigma(t) }
		\ge
		0
		\quad\text{for all } \bT \in \KK
	\end{equation}
	holds for every common Lebesgue point $t$ of the functions
	\begin{align*}
		t &\mapsto \dual{A \dot{\tilde\bSigma}(t) + B^\star \dot{\tilde\bu}(t) }{ \tilde\bSigma(t) } \in L^1(0,T; \R) \\
		\text{and} \quad
		t &\mapsto \phantom{\langle{}} A \dot{\tilde\bSigma}(t) + B^\star \dot{\tilde\bu}(t) \in L^2(0,T; S^2).
	\end{align*}
	Lebesgue's differentiation theorem, see \cite[Theorem~V.5.2]{Yosida1965}, yields that almost all $t \in (0,T)$
	are Lebesgue points of these functions.
	This implies that \eqref{eq:in_the_proof_of_some_thm} holds for almost all $t \in (0,T)$.
	Hence $(\tilde\bSigma, \tilde\bu)$ solves \eqref{eq:Lower-Level_Problem}.
	% }%
	Since the solution is unique we obtain $(\tilde\bSigma, \tilde\bu) = (\bSigma, \bu)$.
	This implies that the weak limit is indeed independent of the chosen subsequence.
	Hence, the whole sequence converges weakly, i.e.\ $(\bSigma_k, \bu_k) \weakly (\bSigma, \bu)$ in $H^1(0,T;S^2 \times V)$.
	That is, $\GG(E\bg_k) = (\bSigma_k, \bu_k) \weakly (\bSigma, \bu) = \GG(E\bg)$ in $H^1(0,T;S^2 \times V)$.
	Since $\{\bg_k\}$ and $\bg$ were arbitrary, $\GG \circ E$ is weakly continuous.
\end{proof}

\subsection{Existence of optimal controls}
\label{subsec:continuous_ulp}
In this section we prove the existence of an optimal control.
The key tool in the proof of \autoref{thm:existence_continuous} is the weak continuity of the forward operator proven in \autoref{thm:weak_continuity_solution_operator}.
For convenience, we repeat the definition of the optimal control problem under consideration
\begin{equation}
	\label{eq:continuous_ulp}
	\tag{$\mathbf{P}$}
	\left.
		\begin{aligned}
			\text{Minimize}\quad & F(\bu,\bg) = \psi( \bu ) + \frac{\nu}{2} \norm{\bg}_{H^1(0,T;U)}^2 \\
			\text{such that}\quad & %\eqref{eq:Lower-Level_Problem} \text{ with } \ell = E\bg, \\
			(\bSigma, \bu) = \GG(E\bg)
			\\
			\text{and}\quad & \bg \in \Uad.
		\end{aligned}
	\quad\right\}
\end{equation}
The admissible set $\Uad$ is a convex closed subset of $H_{\{0\}}^1(0,T;U)$. % with $U = L^2(\Gamma_N ; \R^d)$.
Here, $\norm{\cdot}_{H^1(0,T;U)}$ can be any norm such that $H^1(0,T;U)$ is a Hilbert space, see \eqref{eq:norms_in_w1p}.
Let us fix the assumptions on the objective $\psi$ and on the set of admissible controls $\Uad$.
\begin{assumption}
	\label{asm:psi_lsc}
	\hfill
	\begin{enumerate}
		\item 
			The function $\psi : H^1(0,T; V) \to \R$ is weakly lower semicontinuous, continuous and bounded from below.
		\item 
			The cost parameter $\nu$ is a positive, real number.
		\item 
			The admissible set $\Uad$ is nonempty, convex and closed in $H_{\{0\}}^1(0,T;U)$.
			% Moreover, $\bg(0) = \bnull$ holds for all $\bg \in \Uad$.
	\end{enumerate}
\end{assumption}
Let us give some examples which satisfy these conditions.
Since the domain of definition of $\psi$ is $H^1(0,T;V)$ we can track the displacements,
the strains or even their time derivatives (or point evaluations).
As examples for $\psi$ we mention
\begin{align*}
	\psi^1(\bu) &= \frac12\,\norm{\bu - \bu_d}_{L^2(0,T; L^2(\Omega; \R^d))}^2,
	&
	\psi^2(\bu) &= \frac12 \, \norm{\bu(T) - \bu_{T,d}}_{L^2(\Omega; \R^d)}^2,
	\\
	\psi^3(\bu) &= \frac12 \, \norm{\bvarepsilon(\bu(T)) - \bvarepsilon_{T,d}}_{L^2(\Omega; \S)}^2.
\end{align*}
Here, $\bu_d \in L^2(0,T;L^2(\Omega;\R^d))$, $\bu_{T,d} \in L^2(\Omega; \R^d)$ and $\bvarepsilon_{T,d} \in L^2(\Omega; \S)$ are desired displacements and strains, respectively.

Let us give some examples for the admissible set $\Uad$.
The sets
\begin{align*}
	\Uad^1 &= \{ \bg \in H^1(0,T;U) : \bg(0) = \bnull, \norm{\bg}_{L^2(\Gamma_N; \R^d)} \le \rho \text{ a.e.\ in } (0,T) \} \\
	\Uad^2 &= \{ \bg \in H^1(0,T;U) : \bg(0) = \bg(T) = \bnull \}
\end{align*}
satisfy \autoref{asm:psi_lsc} for every $\rho \ge 0$.

Of special interest is the combination of $\psi^2$ and $\Uad^2$.
Since we consider a quasistatic process,
the condition $\bg(T) = \bnull$ implies that
at time $t = T$ the solid body is unloaded.
Due to the plastic behaviour, the remaining (and lasting) deformation $\bu(T)$
is typically non-zero.
Due to the choice of $\psi^2$, the displacement is controlled towards the desired deformation $\bu_{T,d}$.
Thus, choosing $\psi^2$ and $\Uad^2$ allows the control of the springback of the solid body.
This is of great interest in applications, e.g.\ deep-drawing of metal sheets.

\short{}{Let us show the existence of an optimal control. }%
Using standard arguments, see, e.g.\ \cite[Theorem~4.15]{Troeltzsch2010:1}, the existence of a global minimizer of \eqref{eq:continuous_ulp} is a straightforward consequence of the weak continuity of $\GG\circ E$ proven in \autoref{thm:weak_continuity_solution_operator}.
\short{The proof is contained in the extended version of this paper, see \cite{Wachsmuth2011:1_extended}.}{}

\begin{theorem}
	There exists a global minimizer of \eqref{eq:continuous_ulp}.
	\label{thm:existence_continuous}
\end{theorem}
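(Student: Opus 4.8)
The plan is to use the direct method of the calculus of variations, which is entirely standard once the weak continuity of $\GG \circ E$ from \autoref{thm:weak_continuity_solution_operator} is available. First I would check that the problem is feasible: by \hyperref[asm:psi_lsc]{\autoref*{asm:psi_lsc}~(3)} the set $\Uad$ is nonempty, so pick any $\bg \in \Uad$; then $\GG(E\bg)$ is well defined by \autoref{cor:continuity_GG_in_H1}, and $F$ evaluated at this pair is finite since $\psi$ is real-valued on $H^1(0,T;V)$. Next, since $\psi$ is bounded from below by \hyperref[asm:psi_lsc]{\autoref*{asm:psi_lsc}~(1)} and $\nu > 0$, the objective $F$ is bounded from below on the feasible set, so the infimum $m := \inf F$ is a finite real number, and I can choose a minimizing sequence $\{\bg_k\} \subset \Uad$ with $F(\bu_k, \bg_k) \to m$, where $(\bSigma_k, \bu_k) = \GG(E\bg_k)$.

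The second step is to extract a weakly convergent subsequence. From $F(\bu_k, \bg_k) \to m$ and the lower bound on $\psi$, the sequence $\tfrac{\nu}{2} \norm{\bg_k}_{H^1(0,T;U)}^2$ is bounded, hence $\{\bg_k\}$ is bounded in the Hilbert space $H_{\{0\}}^1(0,T;U)$. By reflexivity there is a subsequence (not relabeled) with $\bg_k \weakly \bar\bg$ in $H_{\{0\}}^1(0,T;U)$. Since $\Uad$ is convex and closed, it is weakly closed, so $\bar\bg \in \Uad$. Now apply \autoref{thm:weak_continuity_solution_operator}: the control-to-state map $\GG \circ E$ is weakly continuous, so $(\bSigma_k, \bu_k) = \GG(E\bg_k) \weakly \GG(E\bar\bg) =: (\bar\bSigma, \bar\bu)$ in $H_{\{0\}}^1(0,T;S^2 \times V)$. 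In particular $\bu_k \weakly \bar\bu$ in $H^1(0,T;V)$, so $(\bar\bSigma, \bar\bu) = \GG(E\bar\bg)$ satisfies the state equation, and $\bar\bg$ is feasible for \eqref{eq:continuous_ulp}.

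The final step is to pass to the limit in the objective. The term $\bg \mapsto \tfrac{\nu}{2}\norm{\bg}_{H^1(0,T;U)}^2$ is a positive multiple of the squared norm on a Hilbert space, hence convex and norm-continuous, therefore weakly lower semicontinuous; thus $\liminf_k \norm{\bg_k}_{H^1(0,T;U)}^2 \ge \norm{\bar\bg}_{H^1(0,T;U)}^2$. The term $\psi(\bu_k)$ converges in the sense $\liminf_k \psi(\bu_k) \ge \psi(\bar\bu)$ by the weak lower semicontinuity of $\psi$ from \hyperref[asm:psi_lsc]{\autoref*{asm:psi_lsc}~(1)}, using $\bu_k \weakly \bar\bu$ in $H^1(0,T;V)$. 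Adding these two inequalities gives
\begin{equation*}
	F(\bar\bu, \bar\bg) \le \liminf_{k\to\infty} F(\bu_k, \bg_k) = m.
\end{equation*}
Since $\bar\bg$ is feasible, also $F(\bar\bu, \bar\bg) \ge m$, hence $F(\bar\bu, \bar\bg) = m$ and $\bar\bg$ is a global minimizer.

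There is no real obstacle here — all the work has been front-loaded into \autoref{thm:weak_continuity_solution_operator}. The only point that requires any care is making sure the weak lower semicontinuity is applied to the right topology: $\psi$ is defined on $H^1(0,T;V)$ and we only have weak convergence of $\bu_k$ there (not, say, strong convergence in some larger space), but \hyperref[asm:psi_lsc]{\autoref*{asm:psi_lsc}~(1)} is stated precisely as weak lower semicontinuity on $H^1(0,T;V)$, so this matches exactly. One could also remark that the minimizing sequence and its weak limit automatically lie in $H_{\{0\}}^1$ because $\Uad \subset H_{\{0\}}^1(0,T;U)$ and this subspace is weakly closed, and because $\GG$ maps into $H_{\{0\}}^1(0,T;S^2\times V)$.
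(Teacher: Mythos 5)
Your proof is correct and follows essentially the same route as the paper's own: the direct method of the calculus of variations, relying on the weak continuity of $\GG \circ E$ from \autoref{thm:weak_continuity_solution_operator}, the weak lower semicontinuity of $\psi$ and of the squared Hilbert norm, and the weak closedness of $\Uad$. The paper packages the last steps slightly more compactly by remarking that the reduced objective $f = \psi(\GG^\bu \circ E(\cdot)) + \tfrac{\nu}{2}\norm{\cdot}_{H^1(0,T;U)}^2$ is weakly lower semicontinuous and writing $j = \lim f(\bg_k) \ge f(\bar\bg) \ge j$, but the content is identical to yours.
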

\short{}{
	\begin{proof}
		In virtue of the control-to-state map $\GG \circ E$, we define the reduced objective $f(\bg) := F(\GG^\bu(E\bg),\bg)$.
		Due to \autoref{asm:psi_lsc}, $f$ is bounded from below, hence $\inf f$ on $\Uad$ exists.
		We define
	% $j := \inf_{\Uad} f > -\infty$.
		$j := \inf \{ f(\bg), \bg \in \Uad\} \in \R$.
	% $j := \inf f(\Uad) > -\infty$.

		Let $\{\bg_k\} \subset \Uad$ be a minimizing sequence, i.e.\ $\lim f(\bg_k) = j$.
		Since $\psi$ is bounded from below and $\nu > 0$, the sequence $\{\bg_k\}$ is bounded in $H^1(0,T;U)$.
		Hence, there is a weakly convergent subsequence, denoted by the same symbol. We denote the weak limit by $\bg$.
		The closeness and convexity of $\Uad$ implies the weak closeness.
		Therefore, $\{\bg_k\} \subset \Uad$ implies $\bg \in \Uad$.

		Since $\GG^\bu \circ E$ is weakly continuous from $H_{\{0\}}^1(0,T;U)$ to $H_{\{0\}}^1(0,T;V)$, see \autoref{thm:weak_continuity_solution_operator},
		the weak lower semicontinuity of $\psi$ and $\norm{\cdot}_{H^1(0,T;U)}^2$ implies that
		the reduced objective 
		\begin{equation*}
			f
			=
			F( \GG^\bu \circ E(\cdot), \cdot )
			=
			\psi(\GG^\bu\circ E(\cdot)) + \frac\nu2 \, \norm{\cdot}_{H^1(0,T;U)}^2
		\end{equation*}
		is weakly lower semicontinuous.
		Therefore,
		\begin{align*}
			j & = \lim f(\bg_k)      &  & \text{($\bg_k$ is a minimizing sequence)}\\
			% & \ge \liminf f(\bg_k) &  & \text{(definition of $\liminf$)}\\
			& \ge f(\bg)           &  & \text{(weak lower semicontinuity)}\\
			& \ge j                &  & \text{($j = \inf f$ and $\bg \in \Uad$).}
		\end{align*}
		This shows $F(\GG^\bu(E\bg), \bg) = f(\bg) = j$, hence $\bg$ is a global minimizer.
	\end{proof}
}

\section{Time discretization}
%%fakesubsubsection: Introduction
\label{sec:time_discretization}

In this section we study a discretization in time of the optimal control problem \eqref{eq:continuous_ulp}.
The time-discrete version of the forward system \eqref{eq:VI_lower_level_introduction} is introduced in \autoref{subsec:introduction_time_discretization}.
The strong convergence of the time-discrete solutions $(\bSigma^\tau,\bu^\tau)$ is shown in \autoref{subsec:strong_convergence_forward}.
In particular, we prove a new rate of convergence of $(\bSigma^\tau,\bu^\tau)$ in $L^\infty(0,T;S^2 \times V)$ without assuming additional regularity of $\bSigma$, see \autoref{lem:sigma_tau_to_sigma_in_L_infty}.
Moreover, we show the convergence of $(\bSigma^\tau,\bu^\tau)$ in $H^1(0,T;S^2\times V)$ (by using an idea of \cite{Krejci1998}) and the convergence of $\lambda^\tau$ in $L^2(0,T;L^2(\Omega))$,
see \hyperref[thm:strong_convergence]{\theoremautorefname{}s~\ref*{thm:strong_convergence}} and \ref{thm:strong_convergence_lambda}, respectively.
These strong convergence results are new for the analysis of the quasistatic problem \eqref{eq:VI_lower_level_introduction}
and both
are essential for passing to the limit in the optimality system in \autoref{Wachsmuth2011:4:sec:weak_stationarity_quasistatic}.

In \autoref{sec:weak_convergence_of_time_discretization} we prove the weak convergence of $(\bSigma^\tau,\bu^\tau)$ in $H^1(0,T;S^2 \times V)$ assuming the weak convergence of the controls $\bg^\tau$ in $H^1(0,T;U)$.
This result is necessary in order to show in \autoref{subsec:approx_by_time_discrete} that \emph{every} local minimizer of \eqref{eq:continuous_ulp} can be approximated by local minimizers of (slightly modified) time-discrete problems \hyperref[eq:time-discrete_ulp_mod]{\textup{($\mathbf{P}_\bg^\tau$)}}, see \autoref{thm:continuous_approximation_with_local_solutions_mod}.
This approximability of local minimizers is essential for the derivation of \emph{necessary} optimality conditions which are satisfied for \emph{every} local minimizer of \eqref{eq:continuous_ulp} in \citeautoref{Wachsmuth2011:4:sec:weak_stationarity_quasistatic}.

\subsubsection*{Notation}
Throughout the paper
we partition the time horizon $[0,T]$ into $N$ intervals, each of constant length $\tau = T/N$ for simplicity.
We use a superscript $\tau$, i.e.\ $(\cdot)^\tau$, to indicate variables and operators associated with the discretization in time.

For a time-discrete variable $f^\tau \in X^N$, where $X$ is some Banach space, the components of $f^\tau$ are denoted by $f\taui$, $i = 1,\ldots,N$.
When necessary, we may refer to $f^\tau_0$ with a pre-defined value (interpreted as an initial condition), mostly $f^\tau_0 = 0$.

If $f^\tau \in X^N$ is the time-discretization of a variable $f \in H_{\{0\}}^1(0, T; X)$, we identify $f^\tau$ with its \emph{linear} interpolation, i.e., for $t \in [(i-1)\,\tau, i\,\tau]$ we define
% \marginpar{\comment{Use subscript $\textup{cp}$ as ``continuous primal''?}}
\begin{equation}
	\label{eq:linear_interpolation}
	f^\tau(t) = \frac{t - (i-1)\,\tau}{\tau} \, f\taui + \frac{i\,\tau - t}{\tau} \, f\tauim,
\end{equation}
with $f^\tau_0 = 0$. Therefore, $X^N$ is identified with a subspace of $H_{\{0\}}^1(0,T; X)$.
We make use of this identification for the variables $\bSigma$, $\bu$ and $\bg$.
For later reference, we remark
\begin{equation}
	% \label{eq:linear_interpolation_derivative}
	\dot f^\tau(t) = \frac{1}{\tau} \, (f\taui - f\tauim)
	\quad\text{and}\quad
	\label{eq:linear_interpolation_vs_constant}
	f\taui
	=
	f^\tau(t) - ( t - i \, \tau ) \, \dot f^\tau(t)
\end{equation}
for almost all $t \in ((i-1)\,\tau, i\,\tau)$ and $i = 1,\ldots,N$.

On the other hand, if $f^\tau \in X^N$ is the discretization in time of a variable belonging to $L^2(0, T; X)$, we identify $f^\tau$ with a piecewise \emph{constant} function in $L^2(0,T;X)$, i.e., for $t \in [(i-1)\,\tau, i\,\tau)$ we define
\begin{equation}
	\label{eq:constant_interpolation}
	f^\tau(t) = f\taui.
\end{equation}
This is used for the plastic multiplier $\lambda$.

\subsection{Introduction of a discretization in time of the forward problem}
\label{subsec:introduction_time_discretization}
In this section we introduce a discretization in time of the forward problem \eqref{eq:VI_lower_level_introduction}.
Replacing the time derivatives by backward differences, we obtain the discretized problem:
given $\ell^\tau \in (V')^N$, find $(\bSigma^\tau, \bu^\tau) \in (S^2 \times V)^N$ such that $\bSigma\taui \in \KK$
and
\begin{subequations}
	\label{eq:Lower-Level_Problem_semidiscretized}
	\begin{align}
		\dual{ A (\bSigma\taui- \bSigma\tauim) + B^\star (\bu\taui-\bu\tauim) }{ \bT - \bSigma\taui } &\ge \mrep{0}{\ell\taui} \quad \text{for all }\bT \in \KK, \label{eq:Lower-Level_Problem_semidiscretized1} \\
		B\bSigma\taui &= \ell\taui \quad \text{in } V', \label{eq:Lower-Level_Problem_semidiscretized2}
	\end{align}
\end{subequations}
holds for all $i \in \{1,\ldots,N\}$,
where $(\bSigma^\tau_0,\bu^\tau_0) = \bnull$.
We denote the solution operator which maps $\ell^\tau \to (\bSigma^\tau, \bu^\tau)$ by $\GG^\tau$.
Let us mention that, for fixed $i$, \eqref{eq:Lower-Level_Problem_semidiscretized} can be interpreted as the solution of a \emph{static} plasticity problem.
Several properties of this time discretization are proven in \cite[Proof of Theorem~8.12, page~196]{HanReddy1999}.
We recall the results which are important for the following analysis.

In order to show an analog to \eqref{eq:relation_sigma_chi_u} for the time-discrete problem, we test \eqref{eq:Lower-Level_Problem_semidiscretized1} with $\bT = \bSigma\taui + (\btau, -\btau)$, where $\btau \in S$ is arbitrary. Using $(\bSigma^\tau_0, \bu^\tau_0) = \bnull$ we obtain
\begin{equation}
	\label{eq:relation_sigma_chi_u_tau}
	\C^{-1} \bsigma^\tau - \bvarepsilon(\bu^\tau) - \H^{-1} \bchi^\tau = \bnull \quad\text{a.e.\ in } (0,T) \times \Omega.
\end{equation}
In \cite[Lemma~8.8 and (8.37)]{HanReddy1999}, we find the a-priori estimate
\begin{equation}
	\label{eq:a-priori_time_bsigma}
	\norm{\bSigma^\tau}_{H^1(0,T; S^2)} \le C \, \norm{\ell^\tau}_{H^1(0,T; V')}.
\end{equation}
Together with the inf-sup condition \eqref{eq:inf-sup} of $B^\star$ and \eqref{eq:relation_sigma_chi_u_tau}, we obtain
\begin{equation}
	\label{eq:a-priori_time}
	\norm{\bSigma^\tau}_{H^1(0,T; S^2)} + \norm{\bu^\tau}_{H^1(0,T; V)} \le C \, \norm{\ell^\tau}_{H^1(0,T; V')}.
\end{equation}
Due to this a-priori estimate, \cite{HanReddy1999} are able to prove the weak convergence of $(\bSigma^\tau,\bu^\tau) = \GG^\tau(\ell^\tau)$
towards the solution $(\bSigma,\bu) = \GG(\ell)$ of \eqref{eq:Lower-Level_Problem} in $H^1(0,T;S^2\times V)$
if the data $\ell\taui$ in \eqref{eq:Lower-Level_Problem_semidiscretized} is chosen as the point evaluation of $\ell$.

From an optimization point of view, this result is too weak for two reasons.
First, the convergence is not w.r.t.\ the strong topology in $H^1(0,T;S^2\times V)$.
Second, the convergence is proven for a restricted choice of $\ell^\tau$.
% In \autoref{thm:strong_convergence} these remaining properties are proven.
We therefore strengthen the results of \cite{HanReddy1999} in \autoref{thm:strong_convergence}.

As for the continuous problem, the time-discrete problem can be formulated as a complementarity system.
The results in \citeautoref{HerzogMeyerWachsmuth2010:1:sec:existence} prove the existence of $\lambda^\tau \in L^2(\Omega)^N$ such that
the complementarity system
\begin{subequations}
	\label{eq:Lower-Level_Problem_semidiscretized_multi}
	\begin{align}
		A (\bSigma\taui-\bSigma\tauim) + B^\star (\bu\taui-\bu\tauim) + \tau \, \lambda\taui \, \DD^\star \DD \bSigma\taui &= \mrep{\bnull}{\ell\taui} \quad \text{in } S^2, \label{eq:Lower-Level_Problem_semidiscretized_multi1} \\
		B\bSigma\taui &= \ell\taui \quad \text{in } V', \label{eq:Lower-Level_Problem_semidiscretized_multi2}\\
		0 \le \lambda\taui \quad \perp \quad \phi(\bSigma\taui) & \le \mrep{0}{\ell\taui} \quad \text{a.e.\ in } \Omega \label{eq:Lower-Level_Problem_semidiscretized_multi3}
	\end{align}
\end{subequations}
is satisfied.
Here, we scaled the multiplier $\lambda^\tau$ appropriately.

Similarly to \eqref{eq:relation_sigma_chi_u_tau} we obtain
a representation formula for $\lambda^\tau$,
\begin{equation}
	\label{eq:-h_chi=lambda_DD_bsigma_discrete}
	-\C^{-1} (\bsigma\taui - \bsigma\tauim) + \bvarepsilon( \bu\taui - \bu\tauim ) = -\H^{-1} (\bchi\taui - \bchi\tauim) = \tau \, \lambda\taui \, \DD\bSigma\taui.
\end{equation}

\begin{remark}
	\label{rem:same_time_discretization_as_krejci}
	The same time discretization approach is used in \cite{Krejci1996,Krejci1998} to prove the existence of a solution of \eqref{eq:abstract_evi}.
	Indeed, by applying the time discretization \cite[(3.6)]{Krejci1998}
	to the reduced formulation \eqref{eq:eq:Lower-Level_Problem_evolution_vi}, we obtain: find $\bSigma\taui \in \KK_{\ell\taui}$, such that
	\begin{equation*}
		% \label{eq:time_discretization_without_bu}
		\dual{ A (\bSigma\taui- \bSigma\tauim) }{ \bT - \bSigma\taui } \ge 0 \quad \text{for all }\bT \in \KK_{\ell\taui}.
	\end{equation*}
	As discussed for the continuous problem \eqref{eq:VI}, we can introduce a Lagrange multiplier associated with $B\bSigma\taui = \ell\taui$ such that $(\bSigma^\tau, \bu^\tau)$ satisfy \eqref{eq:Lower-Level_Problem_semidiscretized}.
\end{remark}

In the following two subsections we prove that the discretization in time converges in a strong and a weak sense,
similar to the continuity properties of the forward operator $\GG$ proven in \hyperref[sec:evolution_vi]{\subsectionautorefname{}s~\ref*{sec:evolution_vi}} and \ref{subsec:weak_continuity}.
Under the assumption that $\ell^\tau$ converges strongly to $\ell$ in the space $H^1(0,T;V')$ or that $\bg^\tau$ converges weakly in the space $H^1(0,T;U)$,
the corresponding solutions $(\bSigma^\tau, \bu^\tau)$ are shown to converge strongly (or weakly) to $(\bSigma, \bu)$ in $H^1(0,T;S^2\times V)$, respectively.

Both statements will be needed in \autoref{subsec:approx_by_time_discrete} to show the approximability of local solutions of \eqref{eq:continuous_ulp} mentioned in the introduction of this section.

\subsection{Strong convergence}
\label{subsec:strong_convergence_forward}
The strong convergence of the discretization in time of an EVI can be found in \cite[Proposition~I.3.11]{Krejci1996}.
He proves the convergence in $W^{1,1}(0,T;X)$ of the time-discrete solutions.
The data of the time-discretized problems are taken as the point evaluations of the data of the continuous problem.
We use these ideas to prove the convergence of $(\bSigma^\tau, \bu^\tau)$ in $H^1(0,T; S^2 \times V)$
whenever the right-hand sides $\ell^\tau$ converge in $H^1(0,T;V')$ as $\tau \searrow 0$.
Moreover, we show the convergence of $\lambda^\tau$ in $L^2(0,T;L^2(\Omega))$ as $\tau \searrow 0$.
This is a new result for the analysis of \eqref{eq:VI_lower_level_introduction} and a crucial ingredient for proving the optimality system in \citeautoref{Wachsmuth2011:4:sec:weak_stationarity_quasistatic}.

Let us consider a sequence of time steps $\{\tau_k\}$ and a sequence of loads $\ell^{\tau_k} \in (V')^{N_k}$, with $N_k \, \tau_k = T$, such that the linear interpolants $\ell^{\tau_k} \in H_{\{0\}}^1(0,T; V')$ converge in $H^1(0,T; V')$ towards some $\ell \in H_{\{0\}}^1(0,T; V')$.
We denote by $(\bSigma^{\tau_k}, \bu^{\tau_k}) \in (S^2 \times V)^{N_k}$ the solutions of the time-discrete problem \eqref{eq:Lower-Level_Problem_semidiscretized1}, as well as their piecewise linear interpolants.
The aim of this section is to prove $(\bSigma^{\tau_k}, \bu^{\tau_k}) \to (\bSigma, \bu)$ in $H^1(0,T; S^2 \times V)$, where $(\bSigma, \bu)$ is the solution to the continuous problem \eqref{eq:Lower-Level_Problem}.

For simplicity of notation, we omit the index $k$ and we refer to ``$\ell^{\tau_k} \to \ell$ if $k \to \infty$'' simply as ``$\ell^\tau \to \ell$ as $\tau \searrow 0$''.

The proof of \autoref{thm:strong_convergence} relies on the convergence argument \autoref{cor:convergence_in_W^1,p}
which is tailored to the analysis of EVIs.
Its prerequisite \eqref{eq:condition_of_cor:convergence_in_W^1,p_1} is the 
convergence of $\bSigma^\tau$ in $L^\infty(0,T; S^2)$.
This is addressed in a preliminary step and required in the proof of \autoref{thm:strong_convergence}.
\begin{lemma}
	\label{lem:sigma_tau_to_sigma_in_L_infty}
	Let $\ell^\tau$ be a bounded sequence in $H^1_{\{0\}}(0,T; V')$ such that
	$\ell^\tau \to \ell$ in $W^{1,1}(0,T; V')$ as $\tau \searrow 0$. Then
	\begin{equation}
		\label{eq:sigma_tau_to_sigma_in_L_infty}
		\norm{\bSigma - \bSigma^\tau}_{L^\infty(0,T; S^2)}^2
		\le C \, \big(\norm{\dot\ell - \dot\ell^\tau}_{L^1(0,T; V')}^2 + \tau \big).
	\end{equation}
	In particular, $\bSigma^\tau \to \bSigma$ in $L^\infty(0,T;S^2)$ as $\tau \searrow 0$.
\end{lemma}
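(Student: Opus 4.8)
The plan is to pass to the EVI reformulation of \autoref{sec:evolution_vi} and to establish a $\sqrt\tau$-type error estimate for its backward Euler discretization by a direct energy argument.

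\textbf{Step 1 (reduction to the EVI).} Using the splitting $\bSigma = \bSigma_0 + \bSigma_\ell$ from \autoref{sec:evolution_vi}, the part $\bSigma_0$ solves the EVI \eqref{eq:eq:Lower-Level_Problem_evolution_vi} in $(S^2,\dual{\cdot}{\cdot}_A)$ with data $\bSigma_\ell$ and $\bSigma_0(0) = \bnull$. Likewise $\bSigma^\tau = \bSigma_0^\tau + \bSigma_{\ell^\tau}$, where by \autoref{rem:same_time_discretization_as_krejci} the linear interpolant $\bSigma_0^\tau$ of the backward Euler iterates takes values in $\KK_B$, satisfies $\bSigma_0^\tau(0) = \bnull$, and, with the piecewise constant interpolant $\bar{\bSigma}_0^\tau$ (i.e.\ $\bar{\bSigma}_0^\tau(t) = \bSigma_0^\tau(i\tau)$ for $t \in [(i-1)\tau, i\tau)$),
\[
	\dual{\dot{\bSigma}_0^\tau(t) + \dot{\bSigma}_{\ell^\tau}(t)}{\bT - \bar{\bSigma}_0^\tau(t)}_A \ge 0 \qquad \text{for all } \bT \in \KK_B,\ \text{a.e.\ in } (0,T).
\]
Since $\ell \mapsto \bSigma_\ell$ is linear and continuous, it maps $W^{1,1}(0,T;V')$ into $W^{1,1}(0,T;S^2)$ boundedly and $H^1_{\{0\}}(0,T;V')$-bounded sequences into $H^1(0,T;S^2)$-bounded ones. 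Hence $\norm{\bSigma_\ell - \bSigma_{\ell^\tau}}_{L^\infty(0,T;S^2)} + \norm{\dot{\bSigma}_\ell - \dot{\bSigma}_{\ell^\tau}}_{L^1(0,T;S^2)} \le C\,\norm{\dot\ell - \dot\ell^\tau}_{L^1(0,T;V')}$ (using $(\ell - \ell^\tau)(0) = \bnull$), and $\norm{\dot{\bSigma}_{\ell^\tau}}_{L^2(0,T;S^2)} \le C$ uniformly in $\tau$. It therefore suffices to bound $\norm{\bSigma_0 - \bSigma_0^\tau}_{L^\infty(0,T;S^2)}^2$ by $C\,(\norm{\dot\ell - \dot\ell^\tau}_{L^1(0,T;V')}^2 + \tau)$.

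\textbf{Step 2 (energy estimate).} On $[(i-1)\tau, i\tau)$ one has $\bar{\bSigma}_0^\tau - \bSigma_0^\tau = (i\tau - t)\,\dot{\bSigma}_0^\tau$ with $\abs{i\tau - t} \le \tau$. Testing \eqref{eq:eq:Lower-Level_Problem_evolution_vi} with $\bT = \bar{\bSigma}_0^\tau(t) \in \KK_B$ and the discrete inequality above with $\bT = \bSigma_0(t) \in \KK_B$, adding, and inserting $\bSigma_0 - \bar{\bSigma}_0^\tau = (\bSigma_0 - \bSigma_0^\tau) - (i\tau - t)\,\dot{\bSigma}_0^\tau$ gives, a.e.\ in $(0,T)$,
\[
	\frac12\,\frac{\d}{\d t}\,\norm{\bSigma_0 - \bSigma_0^\tau}_A^2 \le \norm{\dot{\bSigma}_\ell - \dot{\bSigma}_{\ell^\tau}}_A\,\norm{\bSigma_0 - \bar{\bSigma}_0^\tau}_A + (i\tau - t)\,\dual{\dot{\bSigma}_0 - \dot{\bSigma}_0^\tau}{\dot{\bSigma}_0^\tau}_A .
\]
As in the proof of \autoref{thm:local_1/2_hoelder_evi} one has $\norm{\dot{\bSigma}_0(t)}_A \le \norm{\dot{\bSigma}_\ell(t)}_A$ a.e., and, testing the discrete scheme with the previous iterate, $\norm{\dot{\bSigma}_0^\tau(t)}_A \le \norm{\dot{\bSigma}_{\ell^\tau}(t)}_A$ a.e., so that $\norm{\dot{\bSigma}_0^\tau}_{L^2(0,T;S^2)} \le C$ uniformly in $\tau$. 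Integrating over $(0,t)$, using $\bSigma_0(0) = \bSigma_0^\tau(0) = \bnull$, $\norm{\bSigma_0 - \bar{\bSigma}_0^\tau}_A \le \norm{\bSigma_0 - \bSigma_0^\tau}_A + \tau\,\norm{\dot{\bSigma}_0^\tau}_A$, and Cauchy--Schwarz together with the uniform $L^2$-bounds on $\dot{\bSigma}_0$, $\dot{\bSigma}_0^\tau$, $\dot{\bSigma}_{\ell^\tau}$, all terms carrying a factor $\tau$ or $(i\tau - t)$ collapse into a single $C\,\tau$, leaving, after taking the supremum over $t$,
\[
	\frac12\,M^2 \le \norm{\dot{\bSigma}_\ell - \dot{\bSigma}_{\ell^\tau}}_{L^1(0,T;S^2)}\,M + C\,\tau , \qquad M := \norm{\bSigma_0 - \bSigma_0^\tau}_{L^\infty(0,T;S^2)} .
\]
Young's inequality absorbs the first summand into $\frac14 M^2$, and together with Step 1 this gives $M^2 \le C\,(\norm{\dot\ell - \dot\ell^\tau}_{L^1(0,T;V')}^2 + \tau)$. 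Combining with the $\bSigma_\ell$-estimate of Step 1 and the equivalence of $\norm{\cdot}_A$ and $\norm{\cdot}_{S^2}$ on $S^2$ yields \eqref{eq:sigma_tau_to_sigma_in_L_infty}; since $\ell^\tau \to \ell$ in $W^{1,1}(0,T;V')$, the right-hand side vanishes in the limit, which gives $\bSigma^\tau \to \bSigma$ in $L^\infty(0,T;S^2)$.

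\textbf{Main obstacle.} The delicate point is the consistency term: one must verify that the contributions carrying the factor $(i\tau - t)$, and the gap $\bar{\bSigma}_0^\tau - \bSigma_0^\tau$, are genuinely of order $\tau$ rather than $\sqrt\tau$. This rests on the discrete contraction estimate $\norm{\dot{\bSigma}_0^\tau}_{L^2(0,T;S^2)} \le C$ together with the uniform $H^1$-boundedness of $\ell^\tau$ --- which is precisely why that hypothesis enters the statement. The remaining bookkeeping (which interpolant is tested where, the norm equivalences, and the reduction in Step 1) is routine.
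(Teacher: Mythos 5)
Your proof is correct, and at its core it is the same energy argument the paper uses: test each variational inequality with the other's (shifted) solution, identify the $\tfrac{\d}{\d t}\norm{\cdot}_A^2$ term, bound the consistency terms carrying a factor $i\tau - t$ by $C\tau$ using the a-priori $L^2$ bounds on the derivatives, and close with Young's inequality. The one genuine difference is organizational: you first pass to the EVI variables $\bSigma_0 = \bSigma - \bSigma_\ell$ and $\bSigma_0^\tau = \bSigma^\tau - \bSigma_{\ell^\tau}$ living in the fixed set $\KK_B$, so that the test functions $\bar{\bSigma}_0^\tau(t)$ and $\bSigma_0(t)$ can be used directly; the paper instead stays in the original variables and builds admissible test functions of the form $\bSigma^\tau(t) + \bSigma_{\ell - \ell^\tau}(t)$ and $\bSigma(t) + \bSigma_{\ell\taui - \ell(t)}$ inline, which avoids the need to reassemble $\bSigma - \bSigma^\tau$ from $\bSigma_0 - \bSigma_0^\tau$ and $\bSigma_\ell - \bSigma_{\ell^\tau}$ at the end. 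Your version is a bit more modular (the $\bSigma_\ell$-bookkeeping is isolated in Step 1, and the discrete contraction $\norm{\dot{\bSigma}_0^\tau}_A \le \norm{\dot{\bSigma}_{\ell^\tau}}_A$ replaces the citation of \eqref{eq:a-priori_time_bsigma}), while the paper's version is slightly more direct since it estimates $\norm{\bSigma - \bSigma^\tau}_A$ without the intermediate splitting; both yield the same rate under the same hypotheses.
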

\begin{proof}
	We have
	\begin{equation}
		\label{eq:time_derivative_of_norm}
		\frac12 \frac{\d}{\d t} \norm{\bSigma(t) - \bSigma^\tau(t)}_A^2
		=
		\dual{A\dot\bSigma(t) - A\dot\bSigma^\tau(t)}{\bSigma(t) - \bSigma^\tau(t)} \quad\text{f.a.a.\ } t \in (0,T).
	\end{equation}
	Let us fix some $t \in ((i-1)\,\tau, i\,\tau)$, $i \in \{1, \ldots, N\}$.
	Since $\bSigma^\tau(t)$ is a convex combination of $\bSigma\taui$ and $\bSigma\tauim$, see \eqref{eq:linear_interpolation},  and $\bSigma\tauim, \bSigma\taui \in \KK$, we obtain $\bSigma^\tau(t) \in \KK$.
	Hence, 
	$\bT = \bSigma^\tau(t) + \bSigma_{\ell-\ell^\tau}(t) \in \KK$.
	Using $B\bT = \ell(t)$, \eqref{eq:Lower-Level_Problem1} implies
	\begin{equation}
		\label{eq:continuous_estimate}
		\bigdual{ A \dot\bSigma(t)}{\bSigma^\tau(t)-\bSigma(t)
			+
			\bSigma_{\ell-\ell^\tau}(t)
		}
		\ge 0.
	\end{equation}
	Testing \eqref{eq:Lower-Level_Problem_semidiscretized1} with
	$\bT = \bSigma(t) + \bSigma_{\ell\taui - \ell(t)}$ yields
	\begin{equation*}
		\bigdual{ A (\bSigma\taui-\bSigma\tauim)}{\bSigma(t) - \bSigma\taui
			+
			\bSigma_{\ell\taui - \ell(t)}
		}
		\ge 0.
	\end{equation*}
	% Since \eqref{eq:linear_interpolation} implies $f\taui = f^\tau(t) + (i\,\tau-t)\,\dot f^\tau$ for $f \in \{\bSigma, \ell\}$,
	Using \eqref{eq:linear_interpolation_vs_constant}
	we obtain
	\begin{equation*}
		\bigdual{ A \dot\bSigma^\tau(t)}{\bSigma(t) - \bSigma^\tau(t) - (i\,\tau-t) \, \dot\bSigma^\tau
			+
			\bSigma_{\ell^\tau(t) - \ell(t) + (i\,\tau-t)\,\dot\ell^\tau(t)}
		}
		\ge 0.
	\end{equation*}
	Together with \eqref{eq:continuous_estimate} this yields
	\begin{align*}
		\dual{A\dot\bSigma(t) - A\dot\bSigma^\tau(t)}{\bSigma(t) - \bSigma^\tau(t)}
		&\le
		\bigdual{A\dot\bSigma(t) - A\dot\bSigma^\tau(t)}{
			\bSigma_{\ell-\ell^\tau}(t)
		}
		\\
		&\qquad
		+
		(i\,\tau-t) \, 
		\bigdual{ A \dot\bSigma^\tau(t)}{-\dot\bSigma^\tau(t)
			+
			\bSigma_{\dot\ell^\tau}(t)
		}.
	\end{align*}
	Integrating over $t$ and using \eqref{eq:time_derivative_of_norm}, together with the boundedness of $\dot\bSigma^\tau$ and $\bSigma_{\dot\ell^\tau}$ in $L^2(0,T; S^2)$, see \eqref{eq:a-priori_time_bsigma}, yields
	\begin{align*}
		\frac12 \norm{\bSigma(t) - \bSigma^\tau(t)}_A^2
		\le
		\int_0^t 
		\bigdual{A\dot\bSigma(s) - A\dot\bSigma^\tau(s)}{ \bSigma_{\ell-\ell^\tau}(s) }
		\, \d s
		+
		C \, \tau.
	\end{align*}
	Integrating by parts and using $\bSigma(0) = \bSigma^\tau(0) = \bnull$ gives
	\begin{align*}
		\frac12 \norm{\bSigma(t) - \bSigma^\tau(t)}_A^2
		&\le
		\bigdual{A\bSigma(t) - A\bSigma^\tau(t)}{\bSigma_{\ell-\ell^\tau}(t)} \\
		&\qquad-
		\int_0^t 
		\bigdual{A\bSigma(s) - A\bSigma^\tau(s)}{ \bSigma_{\dot\ell-\dot\ell^\tau}(s) }
		\, \d s
		+
		C \, \tau\\
		& \le
		C \, \big(\norm{\bSigma(t) - \bSigma^\tau(t)}_{S^2} \, \norm{\ell(t) - \ell^\tau(t)}_{V'} \\
			& \qquad + \norm{\bSigma - \bSigma^\tau}_{L^\infty(0,T; S^2)} \, \norm{\dot\ell - \dot\ell^\tau}_{L^1(0,T; V')}
		+ \tau \big).
	\end{align*}
	Taking the supremum over $t \in (0,T)$ on both sides shows
	\begin{align*}
		\norm{\bSigma - \bSigma^\tau}_{L^\infty(0,T; S^2)}^2
		& \le
		C \, \big(\norm{\bSigma - \bSigma^\tau}_{L^\infty(0,T; S^2)} \, \norm{\ell - \ell^\tau}_{L^\infty(0,T;V')} \\
			& \qquad + \norm{\bSigma - \bSigma^\tau}_{L^\infty(0,T; S^2)} \, \norm{\dot\ell - \dot\ell^\tau}_{L^1(0,T; V')}
		+ \tau \big).
	\end{align*}
	Finally, Young's inequality and $W^{1,1}(0,T; V') \embeds L^\infty(0,T; V')$ yields
	\begin{equation*}
		\norm{\bSigma - \bSigma^\tau}_{L^\infty(0,T; S^2)}^2
		\le C \, \big(\norm{\dot\ell - \dot\ell^\tau}_{L^1(0,T; V')}^2 + \tau \big).
	\end{equation*}
	% Let us discuss the addends on the right-hand side. In the first scalar product, the first factor is bounded in $L^2(0,T; S^2)$ by \eqref{eq:a-priori_time_bsigma}, whereas the second factor goes to $\bnull$, since $\ell^\tau \to \ell$ in $H^1(0,T;V')$ implies $\ell^\tau \to \ell$ in $L^\infty(0,T; V')$. The second scalar product is bounded due to \eqref{eq:a-priori_time_bsigma}.
	% Due to $i\,\tau-t \in [0,\tau]$, the right-hand side goes to $0$ in $L^1(0,T; \R)$.
	% Finally, \eqref{eq:time_derivative_of_norm} yields $\bSigma^\tau \to \bSigma$ in $L^\infty(0,T; S^2)$.
\end{proof}
Let us comment on the case when $\ell^\tau$ is the point evaluation of $\ell$.
The estimate \eqref{eq:sigma_tau_to_sigma_in_L_infty}
shows the order of convergence of $\tau^{1/2}$ w.r.t.\ the $L^\infty$-norm provided that $\dot\ell \in W^{1,1}(0,T;V')$.
This rate of convergence is a new result for the time-discretization of \eqref{eq:VI_lower_level_introduction}.
% \comment{is this sufficient for the order of the interpolation of $\tau^{1/2}$?}

In \cite[Theorem~13.1]{HanReddy1999} the estimate
\begin{equation*}
	\norm{\bSigma - \bSigma^\tau}_{L^\infty(0,T;S^2)}
	\le
	c \, \tau \, \norm{\bSigma}_{W^{2,1}(0,T;S^2)}
\end{equation*}
is shown under the assumption $\bSigma \in W^{2,1}(0,T;S^2)$.
However,
this assumption is unlikely to hold for the solution of an EVI.
There are many examples where the solution of an EVI only belongs to $W^{1,\infty}(0,T;X)$, even if $X$ is one-dimensional and the input is smooth, see e.g.\ \cite[Examples~1, 2]{KrejciLovicar1990}.
Hence, $\bSigma \in W^{2,1}(0,T;S^2)$ cannot be considered the generic case.

\begin{theorem}
	\label{thm:strong_convergence}
	Let us assume $\ell^\tau \to \ell$ in $H_{\{0\}}^1(0,T; V')$ as $\tau \searrow 0$.
	Then $(\bSigma^\tau, \bu^\tau) \to (\bSigma, \bu)$ in $H_{\{0\}}^1(0,T; S^2 \times V)$ as $\tau \searrow 0$.
\end{theorem}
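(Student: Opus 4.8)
The plan is to reduce \autoref{thm:strong_convergence} to the abstract convergence theory for the backward--Euler time discretization of EVIs -- exactly the way \autoref{cor:continuity_GG_in_H1} was obtained from \autoref{thm:solution_of_evi} -- and then to recover the displacement from the generalized stress by purely algebraic means. Using the linear, continuous lift $\bSigma_{(\cdot)}$ from \eqref{eq:bSigma(ell)}, I split $\bSigma^\tau = \bSigma_0^\tau + \bSigma_{\ell^\tau}$ and $\bSigma = \bSigma_0 + \bSigma_\ell$, where $\bSigma_{\ell^\tau}$ is the piecewise linear interpolant of the lifts $\bSigma_{\ell\taui}$; by linearity of the lift this coincides with the lift of the interpolant $\ell^\tau$, so $\ell^\tau\to\ell$ in $H_{\{0\}}^1(0,T;V')$ forces $\bSigma_{\ell^\tau}\to\bSigma_\ell$ in $H_{\{0\}}^1(0,T;S^2)$. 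By \autoref{rem:same_time_discretization_as_krejci}, the system \eqref{eq:Lower-Level_Problem_semidiscretized} is precisely the backward--Euler discretization of the EVI \eqref{eq:eq:Lower-Level_Problem_evolution_vi} in the Hilbert space $S^2$ equipped with $\dual{\cdot}{\cdot}_A$, with feasible set $\KK_B$ and data $\bSigma_{\ell^\tau}$; correspondingly $\bSigma_0$ solves the continuous EVI with data $\bSigma_\ell$, and the map $\ell^\tau \mapsto \bSigma_0^\tau$ is the discrete counterpart of $\ell \mapsto \bSigma_0$.

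The second step checks the hypothesis of the abstract convergence statement and invokes it. Since $\ell^\tau\to\ell$ in $H^1$, the sequence $\{\ell^\tau\}$ is bounded in $H_{\{0\}}^1(0,T;V')$ and converges in $W^{1,1}(0,T;V')$, so \autoref{lem:sigma_tau_to_sigma_in_L_infty} gives $\bSigma^\tau\to\bSigma$ in $L^\infty(0,T;S^2)$; combined with $\bSigma_{\ell^\tau}\to\bSigma_\ell$ in $H^1(0,T;S^2)\hookrightarrow C([0,T];S^2)$ this yields $\bSigma_0^\tau = \bSigma^\tau - \bSigma_{\ell^\tau}\to\bSigma_0$ in $L^\infty(0,T;S^2)$, which is exactly the prerequisite \eqref{eq:condition_of_cor:convergence_in_W^1,p_1} of \autoref{cor:convergence_in_W^1,p}. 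Applying that corollary with $p=2$ gives $\bSigma_0^\tau\to\bSigma_0$ in $H^1(0,T;S^2)$, and hence $\bSigma^\tau = \bSigma_0^\tau + \bSigma_{\ell^\tau}\to\bSigma$ in $H_{\{0\}}^1(0,T;S^2)$.

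The third step recovers $\bu^\tau$. Subtracting \eqref{eq:relation_sigma_chi_u} from \eqref{eq:relation_sigma_chi_u_tau} gives $\bvarepsilon(\bu^\tau-\bu) = \C^{-1}(\bsigma^\tau-\bsigma) - \H^{-1}(\bchi^\tau-\bchi)$ in $H^1(0,T;S)$, so the uniform boundedness of $\C^{-1}$ and $\H^{-1}$ from \hyperref[asm:standing_assumptions]{\autoref*{asm:standing_assumptions}~(3)} yields $\norm{\bvarepsilon(\bu^\tau-\bu)}_{H^1(0,T;S)}\le C\,\norm{\bSigma^\tau-\bSigma}_{H^1(0,T;S^2)}\to 0$. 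Since $B^\star = (-\bvarepsilon,\bnull)$, the inf--sup estimate \eqref{eq:inf-sup}, applied pointwise in $t$ to $\bu^\tau(t)-\bu(t)$ and to $\dot\bu^\tau(t)-\dot\bu(t)$, gives $\norm{\bu^\tau-\bu}_{H^1(0,T;V)}\le\sqrt{c_K}\,\norm{\bvarepsilon(\bu^\tau-\bu)}_{H^1(0,T;S)}\to 0$. As $\bSigma$, $\bu$, $\bSigma^\tau$, $\bu^\tau$ all vanish at $t=0$, this is convergence in $H_{\{0\}}^1(0,T;S^2\times V)$, which is the claim.

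The substantial point is the second step, i.e.\ the abstract statement \autoref{cor:convergence_in_W^1,p}: boundedness of $\{\bSigma_0^\tau\}$ in $H^1(0,T;S^2)$ together with the $L^\infty$-convergence only delivers $\bSigma_0^\tau\weakly\bSigma_0$ in $H^1(0,T;S^2)$, and the difficulty is to upgrade this to strong convergence of the time derivatives. I expect this to follow the pattern of \cite[Proposition~I.3.11]{Krejci1996}: exploiting the energy identity of the (discrete) stop operator -- the discrete counterpart of $\scalarprod{\dot u - \dot x}{\dot x}_X = 0$, which already underlies the bound $\norm{\dot\xi_i}_X\le\norm{\dot u_i}_X$ used in the proof of \autoref{thm:local_1/2_hoelder_evi} -- one proves $\limsup_{\tau\searrow 0}\norm{\dot\bSigma_0^\tau}_{L^2(0,T;S^2)}\le\norm{\dot\bSigma_0}_{L^2(0,T;S^2)}$, absorbing the time-interpolation errors through \eqref{eq:linear_interpolation_vs_constant} and the $L^\infty$-convergence already established; combined with the weak lower semicontinuity of the $L^2$-norm, this forces norm convergence of the derivatives and hence strong convergence in $H^1$.
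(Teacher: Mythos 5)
Your overall plan matches the paper's (split off the lift $\bSigma_{\ell^\tau}$, invoke \autoref{cor:convergence_in_W^1,p} to get strong $H^1$-convergence of the stress, recover $\bu$ from \eqref{eq:relation_sigma_chi_u}, \eqref{eq:relation_sigma_chi_u_tau} and the inf-sup condition), and your use of \autoref{lem:sigma_tau_to_sigma_in_L_infty} for the $L^\infty$-convergence and the recovery of the displacement are both fine. The gap is exactly where you flag it, but it is more serious than a missing routine verification.

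You cannot apply \autoref{cor:convergence_in_W^1,p} with $u_n=\bSigma_0^\tau$ and $u_0=\bSigma_0$. Prerequisite \eqref{eq:condition_of_cor:convergence_in_W^1,p_4} demands an \emph{equality} $\norm{\dot u_0(t)}_X=g_0(t)$, and the only a.e.\ bound you can read off from the discrete VI is $\norm{\dot\bSigma_0^\tau(t)}_A\le\norm{\bSigma_{\dot\ell^\tau}(t)}_A$; but for the continuous solution one only has $\norm{\dot\bSigma_0(t)}_A\le\norm{\bSigma_{\dot\ell}(t)}_A$, with strict inequality in general, so there is no $g_n\to g_0$ for which \eqref{eq:condition_of_cor:convergence_in_W^1,p_3} and \eqref{eq:condition_of_cor:convergence_in_W^1,p_4} can both hold with $u_n=\bSigma_0^\tau$. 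The paper sidesteps this by the non-obvious choice
\begin{equation*}
	u_n=\bSigma_{\ell^\tau}-2\,\bSigma^\tau,\qquad
	g_n=\norm{\bSigma_{\dot\ell^\tau}(\cdot)}_A,
\end{equation*}
see \eqref{eq:setting_for_cor:convergence_in_W^1,p}: then \eqref{eq:condition_of_cor:convergence_in_W^1,p_3} is \eqref{eq:thm:strong_convergence_1}, obtained by testing the discrete VI with $\bT=\bSigma\tauim+\bSigma_{\ell\taui-\ell\tauim}$, and \eqref{eq:condition_of_cor:convergence_in_W^1,p_4} is \eqref{eq:thm:strong_convergence_2}, obtained by testing the continuous VI with $\bSigma(t\pm h)+\bSigma_{\ell(t)-\ell(t\pm h)}$ and passing to the limit $h\to 0$ with \cite[Theorem~8.14]{Krejci1998} (this two-sided passage is precisely how the energy \emph{equality} for the continuous solution arises). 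Your alternative route via $\limsup_\tau\norm{\dot\bSigma_0^\tau}_{L^2}\le\norm{\dot\bSigma_0}_{L^2}$ and weak lower semicontinuity would work, but it rests on exactly the same two facts (the discrete inequality $\norm{\dot\bSigma_0^\tau}_A^2\le-\dual{A\bSigma_{\dot\ell^\tau}}{\dot\bSigma_0^\tau}$ and the continuous equality $\norm{\dot\bSigma_0}_A^2=-\dual{A\bSigma_{\dot\ell}}{\dot\bSigma_0}$, combined with weak-strong convergence of the product $\dual{A\bSigma_{\dot\ell^\tau}}{\dot\bSigma_0^\tau}$), and you neither state nor prove the continuous equality. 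Until that equality is derived, the step ``this forces norm convergence'' is an assertion, not a proof.
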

\begin{proof}
	We follow the idea of \cite[Proof of Theorem~3.6]{Krejci1998}.
	We will apply \autoref{cor:convergence_in_W^1,p} with the setting
	\begin{equation}
		\label{eq:setting_for_cor:convergence_in_W^1,p}
		\begin{aligned}
			X &= S^2_A, &
			u_n &= \bSigma_{\ell^\tau} - 2 \, \bSigma^\tau, &
			u_0 &= \bSigma_\ell - 2 \, \bSigma, \\
			% u_n &= \bSigma^\tau, &
			% u_0 &= \bSigma, \\
			p &= 2, &
			g_n &= \norm{\bSigma_{\dot\ell^\tau}(\cdot)}_A, &
			g_0 &= \norm{\bSigma_{\dot\ell}(\cdot)}_A.
		\end{aligned}
	\end{equation}
	to show the convergence of $\bSigma^\tau$.
	Here, $S^2_A$ denotes the Hilbert space $S^2$ equipped with the inner product induced by $A$.
	To apply \autoref{cor:convergence_in_W^1,p}, we have to verify the prerequisites \eqref{eq:condition_of_cor:convergence_in_W^1,p_3} and \eqref{eq:condition_of_cor:convergence_in_W^1,p_4}.

	By definition of the discrete problem \eqref{eq:Lower-Level_Problem_semidiscretized}, we have for a fixed $i \in \{ 1, 2, \ldots, N \}$
	\begin{equation*}
		\dual{ A( \bSigma_i^\tau - \bSigma_{i-1}^\tau) }{ \bT - \bSigma_i^\tau} \ge 0 \quad\text{for all } \bT \in \KK,\; B \bT = \ell\taui,
	\end{equation*}
	which is the time-discrete analog to \eqref{eq:VI}.
	% As is \autoref{sec:evolution_vi} we set $\bSigma_\ell(t) = (\bsigma(\ell(t)), -\bsigma(\ell(t)))$, with $\bsigma_\ell$ from \eqref{eq:bsigma(ell)}.
	We choose $\bT = \bSigma_{i-1}^\tau + \bSigma_{\ell\taui - \ell\tauim}$, see \eqref{eq:bSigma(ell)} for the definition of $\bSigma_\ell$.
	By definition of $\bSigma_\ell$ we have $B \bT  = \ell_i^\tau$ and the shift invariance \eqref{eq:shift-invariance} implies $\bT \in \KK$.
	This shows
	\begin{equation*}
		\Bigdual{ A( \bSigma_i^\tau - \bSigma_{i-1}^\tau) }{ \bSigma_{i-1}^\tau + \bSigma_{\ell\taui - \ell\tauim} - \bSigma_i^\tau} \ge 0.
	\end{equation*}
	Therefore dividing by $\tau^2$ and using the notation of linear interpolants, see \eqref{eq:linear_interpolation}, implies
	\begin{equation*}
		\Bigdual{ A \dot{\bSigma}^\tau(t) }{ \bSigma_{\dot\ell^\tau}(t) - \dot{\bSigma}^\tau(t)} \ge 0 \quad\text{f.a.a.\ } t \in (0,T).
	\end{equation*}
	This shows
	\begin{equation}
		\label{eq:thm:strong_convergence_1}
		% \norm{\dot{\bSigma}^\tau(t)}_A \le \norm{ \bSigma_{\dot\ell^\tau}(t)}_A \quad\text{f.a.a.\ } t \in (0,T),
		\norm{\bSigma_{\dot\ell^\tau}(t) - 2 \, \dot{\bSigma}^\tau(t)}_A \le \norm{ \bSigma_{\dot\ell^\tau}(t)}_A \quad\text{f.a.a.\ } t \in (0,T),
	\end{equation}
	where $\norm{\cdot}_A$ is the norm on $S^2$ induced by $A$, see \eqref{eq:norm_A}.

	To derive a similar formula for the continuous solution, we test \eqref{eq:VI} with $\bT = \bSigma(t+h) + \bSigma_{\ell(t)-\ell(t+h)}$ for $h >0$ and obtain
	\begin{equation*}
		\Bigdual{A\dot\bSigma(t)}{
			\bSigma_{\ell(t)-\ell(t+h)}
			+
			\bSigma(t+h)-\bSigma(t)
		}
		\ge 0.
	\end{equation*}
	Passing to the limit $h \searrow 0$, using \cite[Theorem~8.14]{Krejci1998}, yields
	\begin{equation*}
		\Bigdual{A\dot\bSigma(t)}{
			\bSigma_{\dot\ell}(t)
			-
			\dot\bSigma(t)
		}
		\le 0\quad\text{f.a.a.\ } t \in (0,T).
	\end{equation*}
	Analogously, we obtain by $h \nearrow 0$
	\begin{equation*}
		\Bigdual{A\dot\bSigma(t)}{
			\bSigma_{\dot\ell}(t)
			-
			\dot\bSigma(t)
		}
		\ge 0\quad\text{f.a.a.\ } t \in (0,T).
	\end{equation*}
	Hence
	\begin{equation}
		\label{eq:thm:strong_convergence_2}
		% \norm{\dot{\bSigma}(t)}_A = \norm{\bSigma_{\dot\ell}(t)}_A\quad\text{f.a.a.\ } t \in (0,T).
		\norm{\bSigma_{\dot\ell}(t) - 2\,\dot{\bSigma}(t)}_A = \norm{\bSigma_{\dot\ell}(t)}_A\quad\text{f.a.a.\ } t \in (0,T).
	\end{equation}
	In order to apply \autoref{cor:convergence_in_W^1,p} with the setting \eqref{eq:setting_for_cor:convergence_in_W^1,p}
	we check the prerequisites.
	\begin{itemize}
		\item[\eqref{eq:condition_of_cor:convergence_in_W^1,p_1}:]
			\autoref{lem:sigma_tau_to_sigma_in_L_infty} ensures $\bSigma^\tau \to \bSigma$ in $L^\infty(0,T; S^2_A)$ and the assumption $\ell^\tau \to \ell$ in $H^1(0,T;V')$ implies $\bSigma_{\ell^\tau} \to \bSigma_\ell$ in $L^\infty(0,T;S^2)$.
		\item[\eqref{eq:condition_of_cor:convergence_in_W^1,p_2}:]
			By the linearity of $\ell \mapsto \bSigma_\ell$,
			the assumption $\ell^\tau \to \ell$ in $H^1(0,T; V')$ implies $\norm{\bSigma_{\dot\ell^\tau}(\cdot)}_A \to \norm{\bSigma_{\dot\ell}(\cdot)}_A$ in $L^2(0,T; \R)$.
		\item[\eqref{eq:condition_of_cor:convergence_in_W^1,p_3}:]
			This was shown in \eqref{eq:thm:strong_convergence_1}.
		\item[\eqref{eq:condition_of_cor:convergence_in_W^1,p_4}:]
			This was shown in \eqref{eq:thm:strong_convergence_2}.
	\end{itemize}
	Therefore, \autoref{cor:convergence_in_W^1,p} yields
	\begin{equation*}
		\norm{\bSigma_{\ell^\tau} - 2 \, \bSigma^\tau - (\bSigma_\ell - 2 \, \bSigma)}_{H^1(0,T;S^2_A)} \to 0.
	\end{equation*}
	By the assumption $\ell^\tau \to \ell$ in $H^1(0,T;V')$ we infer
	\begin{equation*}
		\norm{\bSigma^\tau - \bSigma}_{H^1(0,T; S^2_A)} \to 0 \quad\text{as } \tau \searrow 0.
	\end{equation*}

	Using \eqref{eq:relation_sigma_chi_u}, \eqref{eq:relation_sigma_chi_u_tau} and the inf-sup condition of $B$, we obtain $\bu^\tau \to \bu$ in $H^1(0,T; V)$ as $\tau \searrow 0$.
\end{proof}
Using the representation formula \eqref{eq:-h_chi=lambda_DD_bsigma_discrete} for $\lambda^\tau$ we are able to prove the strong convergence of $\lambda^\tau$ in $L^2(0,T;L^2(\Omega))$ towards $\lambda$.
We remark that this is a novel result for the analysis of \eqref{eq:Lower-Level_Problem_multi}.

Let us mention that this strong convergence is crucial for the analysis in \cite{Wachsmuth2011:4}.
Without this strong convergence,
we could neither pass to the limit in the complementarity relation $\mu^\tau \, \lambda^\tau = 0$
nor in the adjoint equation, see \citeautoref{Wachsmuth2011:4:sec:weak_stationarity_quasistatic}.
Hence, this convergence is essential to prove the necessity of the system of weakly stationary type in \cite{Wachsmuth2011:4}.
\begin{theorem}
	\label{thm:strong_convergence_lambda}
	Let $\ell^\tau \to \ell$ in $H_{\{0\}}^1(0,T; V')$ as $\tau \searrow 0$. Then $\lambda^\tau \to \lambda$ in $L^2(0,T; L^2(\Omega))$ as $\tau \searrow 0$.
\end{theorem}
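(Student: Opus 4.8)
The plan is to derive an \emph{explicit pointwise formula} for the discrete multiplier $\lambda^\tau$ in terms of the back stress rate $\dot\bchi^\tau$, exploiting the structure of the von Mises yield function together with the complementarity relation, and then to pass to the limit using the strong convergence $\bchi^\tau \to \bchi$ from \autoref{thm:strong_convergence}. By \eqref{eq:linear_interpolation_vs_constant}, the representation formula \eqref{eq:-h_chi=lambda_DD_bsigma_discrete} can be rewritten, for almost all $t \in (0,T)$, as $-\H^{-1}\dot\bchi^\tau(t) = \lambda^\tau(t)\,\DD\hat\bSigma^\tau(t)$, where $\hat\bSigma^\tau$ is the piecewise \emph{constant} interpolant with value $\bSigma\taui$ on $[(i-1)\,\tau,i\,\tau)$ and $\lambda^\tau$ is the piecewise constant function of \eqref{eq:constant_interpolation}. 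Since $\bSigma\taui \in \KK$, we have $|\DD\hat\bSigma^\tau| \le \tilde\sigma_0$ a.e., and the complementarity relation \eqref{eq:Lower-Level_Problem_semidiscretized_multi3} implies that, pointwise a.e.\ in $(0,T)\times\Omega$, either $\lambda^\tau = 0$ or $\phi(\bSigma\taui) = 0$ (equivalently $|\DD\hat\bSigma^\tau| = \tilde\sigma_0$). In either case $\lambda^\tau\,|\DD\hat\bSigma^\tau| = \tilde\sigma_0\,\lambda^\tau$, so taking the pointwise Frobenius norm of the representation formula and using $\lambda^\tau \ge 0$ gives
\begin{equation*}
	\tilde\sigma_0 \, \lambda^\tau = \bigl| \H^{-1} \dot\bchi^\tau \bigr| \quad \text{a.e.\ in } (0,T)\times\Omega.
\end{equation*}
Applying the same reasoning to the continuous system \eqref{eq:Lower-Level_Problem_multi} — whose second component reads $\H^{-1}\dot\bchi + \lambda\,\DD\bSigma = \bnull$ — together with $\bSigma(t) \in \KK$ and $0 \le \lambda \perp \phi(\bSigma) \le 0$, yields $\tilde\sigma_0\,\lambda = |\H^{-1}\dot\bchi|$ a.e.

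It then remains to pass to the limit. By \autoref{thm:strong_convergence} we have $\bchi^\tau \to \bchi$ in $H^1(0,T;S)$, hence $\dot\bchi^\tau \to \dot\bchi$ in $L^2(0,T;S) \cong L^2((0,T)\times\Omega;\S)$. Since $\H^{-1} \in L^\infty(\Omega;\LL(\S,\S))$, multiplication by $\H^{-1}$ is bounded on $L^2$, so $\H^{-1}\dot\bchi^\tau \to \H^{-1}\dot\bchi$ in $L^2((0,T)\times\Omega;\S)$. The reverse triangle inequality for the Frobenius norm shows that $M \mapsto |M|$ induces a nonexpansive, hence continuous, map on $L^2$, whence $|\H^{-1}\dot\bchi^\tau| \to |\H^{-1}\dot\bchi|$ in $L^2((0,T)\times\Omega;\R)$. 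Dividing by $\tilde\sigma_0 > 0$ and inserting the two identities from the previous step gives $\lambda^\tau \to \lambda$ in $L^2(0,T;L^2(\Omega))$, which is the assertion.

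The only non-routine step is the observation that the yield condition and the complementarity relation together collapse the product $\lambda^\tau\,\DD\hat\bSigma^\tau$ into $\tilde\sigma_0\,\lambda^\tau$; this is what allows $\lambda^\tau$ to be recovered as $\tilde\sigma_0^{-1}\,|\H^{-1}\dot\bchi^\tau|$ without ever having to invert the possibly degenerate factor $\DD\bSigma$. After that, the argument is a standard continuity argument. One should only take care that the $\bSigma^\tau$ appearing in \eqref{eq:-h_chi=lambda_DD_bsigma_discrete} is the piecewise constant, not the piecewise linear, interpolant; this, however, does not affect the bound $|\DD\hat\bSigma^\tau| \le \tilde\sigma_0$ and is irrelevant in the limit, since it has already been eliminated from the formula for $\lambda^\tau$.
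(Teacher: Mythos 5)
Your proof is correct, and it takes a genuinely simpler route than the paper's. The paper also exploits the pointwise complementarity identity $\lambda\taui\,\abs{\DD\bSigma\taui} = \tilde\sigma_0\,\lambda\taui$, but deploys it only to establish convergence of $L^2$-norms (its Step~2). It then reaches strong $L^2$-convergence indirectly in three steps: first $L^1$-convergence via an inner-product form of the representation formula (which forces a correction term of the shape $(t - i\,\tau)\,\DD\dot\bSigma^\tau$ to reconcile the piecewise \emph{linear} interpolant $\bSigma^\tau$, for which $H^1$-convergence is known from \autoref{thm:strong_convergence}, with the piecewise \emph{constant} interpolant implicit in \eqref{eq:-h_chi=lambda_DD_bsigma_discrete}); then $L^2$-norm convergence; then weak compactness plus norm convergence. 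Your observation that taking the pointwise Frobenius norm of the representation formula, together with $\lambda^\tau \ge 0$ and complementarity, collapses everything to the closed-form expression $\tilde\sigma_0\,\lambda^\tau = \abs{\H^{-1}\dot\bchi^\tau}$ a.e.\ — and likewise $\tilde\sigma_0\,\lambda = \abs{\H^{-1}\dot\bchi}$ for the continuous problem — eliminates the interpolant-mismatch issue entirely and reduces the claim to the nonexpansiveness of $M \mapsto \abs{M}$ combined with $\dot\bchi^\tau \to \dot\bchi$ in $L^2(0,T;S)$. The details are all in order: the continuous identity is the $\bchi$-component of \eqref{eq:Lower-Level_Problem_multi1} together with the pointwise complementarity, the boundedness of multiplication by $\H^{-1} \in L^\infty(\Omega;\LL(\S,\S))$ on $L^2$ is immediate, and the reverse triangle inequality does the rest. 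What your approach buys is a shorter, more transparent argument with an explicit pointwise formula for the multiplier; the paper's three-step detour offers nothing extra here, so this is a genuine simplification.
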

\begin{proof}
	\textit{Step~(1):}
	We show that $\lambda^\tau \to \lambda$ in $L^1(0,T;L^1(\Omega))$.
	Using $\lambda\taui \, \abs{\DD\bSigma\taui}^2 = \tilde\sigma_0^2 \, \lambda\taui$ by \eqref{eq:Lower-Level_Problem_semidiscretized_multi3} and \eqref{eq:-h_chi=lambda_DD_bsigma_discrete}, we obtain
	\begin{equation*}
		\lambda\taui
		= \frac{1}{\tilde\sigma_0^2} \, \lambda\taui \, \DD\bSigma\taui \dprod \DD\bSigma\taui
		= \frac{1}{\tau \, \tilde\sigma_0^2} (-\H^{-1} (\bchi\taui-\bchi\tauim ) \dprod \DD\bSigma\taui).
	\end{equation*}
	Using the notion of piecewise linear interpolations $\bchi^\tau$, $\bSigma^\tau$ and the piecewise constant interpolation $\lambda^\tau$, we obtain for $t \in ((i-1)\,\tau, i\,\tau)$, see \eqref{eq:linear_interpolation_vs_constant},
	\begin{equation*}
		\lambda^\tau(t) = \frac{-1}{\tilde\sigma_0^2} \Big(\H^{-1} \dot\bchi^\tau(t) \dprod \big[\DD\bSigma^\tau(t) - (t -i\,\tau)\,\DD\dot\bSigma^\tau(t)\big]\Big).
	\end{equation*}
	Using $\lambda = -\H^{-1} \dot\bchi \dprod \DD\bSigma / \tilde\sigma_0^2$, by \eqref{eq:Lower-Level_Problem_multi}, this implies
	\begin{align*}
		\tilde\sigma_0^2 \, \abs{\lambda^\tau(t) - \lambda(t)}
		&\le \Bigabs{ \H^{-1} \dot\bchi^\tau(t) \dprod \DD\bSigma^\tau(t) - \H^{-1}\dot\bchi(t) \dprod \DD\bSigma(t)} \\
		&\qquad + (t -i\,\tau)\, \Bigabs{ \H^{-1}\dot\bchi^\tau(t)\dprod\DD\dot\bSigma^\tau(t) }\\
		&\le \Bigabs{ \H^{-1} \dot\bchi^\tau(t) \dprod \DD\bSigma^\tau(t) - \H^{-1}\dot\bchi(t) \dprod \DD\bSigma(t)} \\
		&\qquad + \tau \, \Bigabs{ \H^{-1}\dot\bchi^\tau(t)\dprod\DD\dot\bSigma^\tau(t) }.
	\end{align*}
	Hence,
	\begin{align*}
		\norm{\lambda^\tau - \lambda}_{L^1(0,T;L^1(\Omega))}
		&\le c \, \Big(
			\norm{\dot\bchi^\tau} \, \norm{\DD\bSigma^\tau - \DD\bSigma} +
			\norm{\DD\bSigma} \, \norm{\dot\bchi^\tau - \dot\bchi}\\
			&\qquad + \tau \, \norm{\dot\bchi^\tau} \, \norm{\DD\dot\bSigma^\tau}
		\Big),
	\end{align*}
	where all norms on the right-hand side are those of $L^2(0,T;S)$.
	Using $\bSigma^\tau \to \bSigma$ in $H^1(0,T; S^2)$ implies $\lambda^\tau \to \lambda$ in $L^1(0,T; L^1(\Omega))$.

	\textit{Step~(2):}
	We show that $\norm{\lambda^\tau}_{L^2(0,T;L^2(\Omega))} \to \norm{\lambda}_{L^2(0,T;L^2(\Omega))}$.
	Using that $\lambda^\tau$ is the piecewise constant interpolation of $(\lambda\taui)_{i=1}^N$, we obtain
	\begin{equation*}
		\norm{\lambda^\tau}_{L^2(0,T;L^2(\Omega))}^2
		= \tau \sum_{i=1}^{N} \int_\Omega (\lambda\taui)^2 \, \d x.
	\end{equation*}
	Using $\lambda\taui \, \abs{\DD\bSigma\taui} = \tilde\sigma_0 \, \lambda\taui$ by \eqref{eq:Lower-Level_Problem_semidiscretized_multi3}, we obtain
	\begin{align*}
		\norm{\lambda^\tau}_{L^2(0,T;L^2(\Omega))}^2
		& = \frac{\tau}{\tilde\sigma_0^2} \sum_{i=1}^{N} \int_\Omega \abs{\lambda\taui \, \DD\bSigma\taui}^2 \, \d x \\
		& = \frac{\tau}{\tilde\sigma_0^2} \sum_{i=1}^{N} \int_\Omega \Bigabs{- \H^{-1}\frac{\bchi\taui-\bchi\tauim}{\tau}}^2 \, \d x && \text{(by \eqref{eq:-h_chi=lambda_DD_bsigma_discrete})} \\
		& = \frac{1}{\tilde\sigma_0^2}\norm{\H^{-1}\dot\bchi^\tau}_{L^2(0,T;S)}^2.
	\end{align*}
	An analogous calculation shows $\norm{\lambda}_{L^2(0,T; L^2(\Omega))} = \frac{1}{\tilde\sigma_0}\norm{\H^{-1}\dot\bchi}_{L^2(0,T;S)}$.
	Hence, by $\bSigma^\tau \to \bSigma$ in $H^1(0,T; S^2)$, see \autoref{thm:strong_convergence}, we infer the convergence of norms $\norm{\lambda^\tau}_{L^2(0,T; L^2(\Omega))} \to \norm{\lambda}_{L^2(0,T; L^2(\Omega))}$.

	\textit{Step~(3):}
	We show that $\lambda^\tau \to \lambda$ in $L^2(0,T;L^2(\Omega))$.
	By Step~(2) we know that for every subsequence of $\tau$, there is a subsequence $\tau_k$ such that $\lambda^{\tau_k} \weakly \tilde\lambda$ in $L^2(0,T;L^2(\Omega))$ for some $\tilde \lambda$.
	Step~(1) implies $\tilde\lambda = \lambda$.
	Hence, the whole sequence converges weakly. In view of the convergence of norms, $\lambda^\tau$ converges strongly to $\lambda$ in $L^2(0,T;L^2(\Omega))$ as $\tau\searrow0$.
\end{proof}
\subsection{Weak convergence}
\label{sec:weak_convergence_of_time_discretization}
In this section we show the weak convergence of the time-discrete states $(\bSigma^\tau, \bu^\tau) = \GG^\tau(E \bg^\tau)$ in $H_{\{0\}}^1(0,T;S^2 \times V)$,
under the assumption that the controls $\bg^\tau$ converges weakly towards $\bg$ in $H_{\{0\}}^1(0,T;U)$.
This result is essential for proving the approximability results in \autoref{subsec:approx_by_time_discrete}.
Similarly to the weak continuity of the forward operator proven in \autoref{subsec:weak_continuity}, we need the weak convergence of the right-hand sides $E \bg^\tau$ with respect to a stronger norm in space, see the discussion in the beginning of \autoref{subsec:weak_continuity}.
% Therefore, we assumed that the embedding $E$ from $U$ to $V'$ is compact.
\begin{theorem}
	\label{thm:weak_convergence_of_time_discretization}
	Let $\bg^\tau \in U^n$ such that the linear interpolants converge weakly, i.e., there is $\bg \in H_{\{0\}}^1(0,T; U)$ such that $\bg^\tau \weakly \bg$ in $H_{\{0\}}^1(0, T; U)$ as $\tau \searrow 0$.

	Then the solutions $(\bSigma^\tau, \bu^\tau)$ of the time-discrete problem \eqref{eq:Lower-Level_Problem_semidiscretized} with right-hand side $\ell^\tau = E\bg^\tau$ converge weakly in $H_{\{0\}}^1(0, T; S^2 \times V)$ towards the solution $(\bSigma, \bu)$ of \eqref{eq:Lower-Level_Problem} with right-hand side $\ell = E\bg$ as $\tau \searrow 0$.
\end{theorem}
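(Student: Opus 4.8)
The plan is to mimic the proof of \autoref{thm:weak_continuity_solution_operator}, replacing the continuous variational inequality \eqref{eq:Lower-Level_Problem1} by its time-discrete counterpart \eqref{eq:Lower-Level_Problem_semidiscretized1}. First I would observe that $\bg^\tau \weakly \bg$ implies boundedness of $\bg^\tau$ in $H^1(0,T;U)$, hence of $\ell^\tau = E\bg^\tau$ in $H^1(0,T;V')$, and by the a-priori estimate \eqref{eq:a-priori_time} the sequence $(\bSigma^\tau,\bu^\tau)$ is bounded in $H^1(0,T;S^2\times V)$. I extract a subsequence (not relabeled) with $(\bSigma^\tau,\bu^\tau) \weakly (\tilde\bSigma,\tilde\bu)$ in $H^1(0,T;S^2\times V)$; it then suffices to show that $(\tilde\bSigma,\tilde\bu)$ solves \eqref{eq:Lower-Level_Problem} with right-hand side $E\bg$, since uniqueness forces $(\tilde\bSigma,\tilde\bu)=(\bSigma,\bu)$ and makes the weak limit subsequence-independent.

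Admissibility $\tilde\bSigma(t)\in\KK$ f.a.a.\ $t$ follows as in \autoref{thm:weak_continuity_solution_operator}: by \eqref{eq:linear_interpolation} the interpolant $\bSigma^\tau(t)$ is a convex combination of $\bSigma\tauim,\bSigma\taui\in\KK$, hence lies in $\KK$, and the corresponding subset of $H^1(0,T;S^2)$ is convex and closed, thus weakly closed. For the equilibrium, \eqref{eq:Lower-Level_Problem_semidiscretized2} gives $B\bSigma^\tau = E\bg^\tau$ for the linear interpolants in $H^1(0,T;V')$, and passing to the weak limit (using that $B$ and $E$ are bounded linear, hence weakly continuous) yields $B\tilde\bSigma = E\bg$, which is \eqref{eq:Lower-Level_Problem2}.

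The core is the time-discrete analog of \autoref{lem:limsup}. Let $\bar\bSigma^\tau$ and $\bar\bg^\tau$ be the piecewise constant interpolants of $(\bSigma\taui)$ and $(\bg\taui)$. By \eqref{eq:linear_interpolation_vs_constant} and the a-priori bound, $\norm{\bar\bSigma^\tau-\bSigma^\tau}_{L^2(0,T;S^2)}\le C\,\tau$ and $\norm{\bar\bg^\tau-\bg^\tau}_{L^2(0,T;U)}\le C\,\tau$, so both interpolants share the respective weak $L^2$-limits. Dividing \eqref{eq:Lower-Level_Problem_semidiscretized1} by $\tau$ and integrating, for every $\bT\in L^2(0,T;S^2)$ with $\bT(t)\in\KK$ f.a.a.\ $t$ one gets $\int_0^T \dual{A\dot\bSigma^\tau + B^\star\dot\bu^\tau}{\bT-\bar\bSigma^\tau}\,\d t\ge0$. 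Here $\int_0^T \dual{A\dot\bSigma^\tau+B^\star\dot\bu^\tau}{\bT}\,\d t \to \int_0^T \dual{A\dot{\tilde\bSigma}+B^\star\dot{\tilde\bu}}{\bT}\,\d t$ by weak convergence of the derivatives. The term with $\bar\bSigma^\tau$ I split: its displacement part equals $\int_0^T \dual{\dot\bu^\tau}{E\bar\bg^\tau}\,\d t$ by \eqref{eq:Lower-Level_Problem_semidiscretized2}, and since $\bg^\tau\weakly\bg$ in $H^1(0,T;U)$ implies $E\bg^\tau\to E\bg$ strongly in $L^2(0,T;V')$ (Aubin's lemma with the compactness of $E$, exactly as in the proof of \autoref{lem:limsup}) together with $\norm{E(\bar\bg^\tau-\bg^\tau)}_{L^2(0,T;V')}\le C\,\tau$, this part converges to $\int_0^T \dual{\dot{\tilde\bu}}{E\bg}\,\d t = \int_0^T \dual{B^\star\dot{\tilde\bu}}{\tilde\bSigma}\,\d t$. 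The stress part equals $\int_0^T \dual{A\dot\bSigma^\tau}{\bSigma^\tau}\,\d t + O(\tau) = \tfrac12\dual{A\bSigma^\tau(T)}{\bSigma^\tau(T)} + O(\tau)$ after integration by parts (using $\bSigma^\tau(0)=\bnull$), and weak lower semicontinuity of the convex continuous functional $\bSigma\mapsto\dual{A\bSigma}{\bSigma}$ on $S^2$, combined with $\bSigma^\tau(T)\weakly\tilde\bSigma(T)$, gives $\liminf \ge \tfrac12\dual{A\tilde\bSigma(T)}{\tilde\bSigma(T)} = \int_0^T \dual{A\dot{\tilde\bSigma}}{\tilde\bSigma}\,\d t$. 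Hence $\limsup_{\tau\searrow0} \int_0^T \dual{A\dot\bSigma^\tau+B^\star\dot\bu^\tau}{\bar\bSigma^\tau}\,\d t \ge \int_0^T \dual{A\dot{\tilde\bSigma}+B^\star\dot{\tilde\bu}}{\tilde\bSigma}\,\d t$, so that $\int_0^T \dual{A\dot{\tilde\bSigma}+B^\star\dot{\tilde\bu}}{\bT-\tilde\bSigma}\,\d t\ge0$ for all such $\bT$.

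Finally, as in \autoref{thm:weak_continuity_solution_operator}, a Lebesgue point argument upgrades this integrated inequality to the pointwise inequality \eqref{eq:Lower-Level_Problem1}, so $(\tilde\bSigma,\tilde\bu)$ solves \eqref{eq:Lower-Level_Problem} with right-hand side $E\bg$; by uniqueness $(\tilde\bSigma,\tilde\bu)=(\bSigma,\bu)$, and since every subsequence has this same weak limit, the full sequence $(\bSigma^\tau,\bu^\tau)$ converges weakly to $(\bSigma,\bu)$ in $H_{\{0\}}^1(0,T;S^2\times V)$. I expect the main obstacle to be the careful bookkeeping of the $O(\tau)$ discrepancies between the piecewise constant and piecewise linear interpolants: one is forced to test against the piecewise constant $\bar\bSigma^\tau$ in the inequality, yet the energy/integration-by-parts identity needed for the $\limsup$ term is only available for the piecewise linear $\bSigma^\tau$, so the two must be reconciled via the $\tau$-order estimates above.
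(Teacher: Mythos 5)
Your proposal is correct and takes essentially the same approach as the paper: bound, extract a weakly convergent subsequence, and show the limit satisfies \eqref{eq:Lower-Level_Problem} by an integrated version of the discrete variational inequality together with a \autoref{lem:limsup}-type upper-semicontinuity argument. The only organizational difference is that the paper rewrites $\bSigma^\tau((i+1)\tau) = \bSigma^\tau(t) + (1-\kappa)\,\tau\,\dot\bSigma^\tau(t)$ to factor out an explicit $\frac{\tau}{2}\int_0^T\dual{A\dot\bSigma^\tau + B^\star\dot\bu^\tau}{\dot\bSigma^\tau}\,\d t$ remainder and then applies \autoref{lem:limsup} verbatim to the piecewise linear interpolants, whereas you keep the piecewise constant interpolant $\bar\bSigma^\tau$ and re-derive the lemma's three-term split with $O(\tau)$ corrections in each piece---the same bookkeeping in a slightly less modular form.
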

\begin{proof}
	Due to the a-priori bound \eqref{eq:a-priori_time}, there exists a weakly convergent subsequence of $(\bSigma^\tau, \bu^\tau)$, denoted by the same symbol, i.e.\ $(\bSigma^\tau, \bu^\tau) \weakly (\widetilde\bSigma, \widetilde\bu)$ in $H^1(0,T;S^2 \times V)$ as $\tau \searrow 0$.
	As in the proof of \autoref{thm:weak_continuity_solution_operator}, we shall show $(\widetilde\bSigma, \widetilde\bu) = (\bSigma, \bu)$. Hence the whole sequence converges weakly and this simplification of notation is justified.

	Let $\bT \in L^2(0, T; S^2)$ with $\bT \in \KK$ a.e.\ in $(0,T)$ be arbitrary.
	Let $i \in \{0, \ldots, N-1\}$ and $\kappa \in (0,1)$ be given.
	Set $t = (i + \kappa ) \, \tau$.
	Testing \eqref{eq:Lower-Level_Problem_semidiscretized1} with $\bT(t)$ yields
	\begin{equation*}
		\dual{A \dot\bSigma^\tau(t) + B^\star \dot\bu^\tau(t)}{\bT(t) - \bSigma^\tau( (i+1) \, \tau)} \ge 0.
	\end{equation*}
	Using
	$
	% \begin{align*}
	% 	\bT(t)
	% 	- 
	\bSigma^\tau( (i+1) \, \tau)
	=
	% 	&=
	% 	\bT(t)
	% 	-
	% 	\bSigma^\tau(t) + \bSigma^\tau(t) - \bSigma^\tau( (i+1) \, \tau) \\
	% 	&=
	% 	\bT(t)
	% 	-
	% 	\bSigma^\tau(t) - ( (i+1)\,\tau - t) \, \dot\bSigma^\tau(t) \\
	% 	&=
	% 	\bT(t)
	% 	-(
	\bSigma^\tau(t) + ( 1 - \kappa ) \, \tau \, \dot\bSigma^\tau(t) 
	% 	)
	$
	by \eqref{eq:linear_interpolation},
	% \end{align*}
	and integrating over $\kappa \in (0,1)$, i.e.\ $t \in (i\,\tau, (i+1)\,\tau)$ yields
	\begin{align*}
		0 
		&\le
		\int_{i\,\tau}^{(i+1)\,\tau}
		\dual{A \dot\bSigma^\tau(t) + B^\star \dot\bu^\tau(t)}{\bT(t) - \bSigma^\tau( (i+1) \, \tau)}
		\, \d t \\
		&=
		\int_{i\,\tau}^{(i+1)\,\tau} \dual{A \dot\bSigma^\tau + B^\star \dot\bu^\tau}{\bT - \bSigma^\tau} \, \d t
		-
		\frac{\tau}{2}
		\int_{i\,\tau}^{(i+1)\,\tau}
		\dual{A \dot\bSigma^\tau + B^\star \dot\bu^\tau}{\dot\bSigma^\tau} \, \d t.
	\end{align*}
	Hence, summing over $i = 0,\ldots, N-1$ implies
	\begin{equation}
		\label{eq:in_thm:weak_convergence_of_time_discretization}
		0 
		\le
		\int_{0}^{T} \dual{A \dot\bSigma^\tau + B^\star \dot\bu^\tau}{\bT - \bSigma^\tau} \, \d t
		-
		\frac{\tau}{2}
		\int_{0}^{T}
		\dual{A \dot\bSigma^\tau + B^\star \dot\bu^\tau}{\dot\bSigma^\tau} \, \d t.
	\end{equation}
	Due to the boundedness of $(\bSigma^\tau, \bu^\tau)$ in $H^1(0,T; S^2\times V)$,
	the second addend goes to zero as $\tau \searrow 0$.
	For the first addend we can proceed as in the proof of \autoref{thm:weak_continuity_solution_operator}:
	the application of \autoref{lem:limsup} yields
	\begin{equation*}
		\limsup_{\tau \searrow 0}
		\int_{0}^{T} \dual{A \dot\bSigma^\tau + B^\star \dot\bu^\tau}{\bT - \bSigma^\tau} \, \d t
		\le
		\int_{0}^{T} \dual{A \dot{\widetilde\bSigma}      + B^\star \dot{\widetilde\bu}     }{\bT - \widetilde\bSigma     } \, \d t.
	\end{equation*}
	Together with \eqref{eq:in_thm:weak_convergence_of_time_discretization} this implies
	\begin{equation*}
		0 \le
		\int_{0}^{T} \dual{A \dot{\widetilde\bSigma}      + B^\star \dot{\widetilde\bu}     }{\bT - \widetilde\bSigma     } \, \d t
		\quad\text{for all } \bT \in L^2(0,T; S^2) \text{, } \bT \in \KK \text{ a.e.\ in } (0,T).
	\end{equation*}
	Therefore, $(\widetilde\bSigma, \widetilde\bu)$ is the solution of \eqref{eq:Lower-Level_Problem}.
	This implies $(\widetilde\bSigma, \widetilde\bu) = (\bSigma, \bu)$.
\end{proof}

\subsection{The time-discrete optimal control problem}
\label{subsec:approx_by_time_discrete}
In this section we consider a time discretization of \eqref{eq:continuous_ulp}.
The time-discrete controls belong to the space $U^N$, which is identified with a subspace of $H_{\{0\}}^1(0,T;U)$ via the piecewise linear interpolation \eqref{eq:linear_interpolation}.
The discretization of the admissible set is given by $\Uadtau = \Uad \cap U^N$, where $\tau = T/N$.
Now, the time-discrete optimal control problem reads
\begin{equation}
	\label{eq:time-discrete_ulp}
	\tag{$\mathbf{P}^\tau$}
	\left.
		\begin{aligned}
			\text{Minimize}\quad & F(\bu^\tau,\bg^\tau) = \psi( \bu^\tau ) + \frac{\nu}{2} \norm{\bg^\tau}_{H^1(0,T;U)}^2 \\
			\text{such that}\quad & (\bSigma^\tau, \bu^\tau) = \GG^\tau( E \bg^\tau) \\ %\eqref{eq:Lower-Level_Problem_semidiscretized} \text{ with } \ell^\tau = E\bg^\tau \\
			\text{and}\quad & \bg^\tau \in \Uadtau.
		\end{aligned}
		\quad
	\right\}
\end{equation}
For the analysis of \eqref{eq:time-discrete_ulp} we need the additional assumption
that all $\bg \in \Uad$ can be approximated by time-discrete controls $\bg^\tau \in \Uadtau$.
\begin{assumption}
	\label{asm:approximability_of_Uad}
	In addition to \autoref{asm:psi_lsc}
	we suppose that for all $\bg \in \Uad$, there exists $\bg^\tau \in \Uadtau$, such that $\bg^\tau \to \bg$ in $H^1(0,T;U)$ as $\tau \searrow 0$.
\end{assumption}
We remark that this condition is satisfied for both examples of $\Uad$ given after \autoref{asm:psi_lsc}.

The aim of this section is to answer the question whether optimal controls of \eqref{eq:continuous_ulp} can be approximated by optimal controls of \eqref{eq:time-discrete_ulp}.
Similar results for a regularization of an optimal control problem of static plasticity have been proved in \cite[Section~3.2]{HerzogMeyerWachsmuth2009:2}.
These results are very common for the approximation of minimizers of optimal control problems and the technique of proof is applicable to various problems, see also \cite{CasasTroeltzsch2002:1,Barbu1981:1}.
To be precise, we prove that
\begin{itemize}
	\item one global optimum of \eqref{eq:continuous_ulp} can be approximated by global solutions of \eqref{eq:time-discrete_ulp}, see \autoref{thm:continuous_approximation_with_global_solutions},
	\item every strict local optimum of \eqref{eq:continuous_ulp} can be approximated by local solutions of \eqref{eq:time-discrete_ulp}, see \autoref{thm:continuous_approximation_with_local_solutions},
	\item every local optimum of \eqref{eq:continuous_ulp} can be approximated by local solutions of a perturbed problem \hyperref[eq:time-discrete_ulp_mod]{\textup{($\mathbf{P}_\bg^\tau$)}}, see \autoref{thm:continuous_approximation_with_local_solutions_mod}.
\end{itemize}
These results are very important for the analysis in \cite{Wachsmuth2011:4}.
Without these results at hand, one cannot show the necessity of the optimality conditions therein.

Although
both of $\bu^\tau$ and $\bg^\tau$ are optimization variables in \eqref{eq:time-discrete_ulp},
for simplicity
we refer solely to $\bg^\tau$ being (locally or globally) optimal.
This is justified in view of the continuous solution map $\GG^\tau$ of \eqref{eq:Lower-Level_Problem_semidiscretized}.

First we check that there exists a minimizer of the time-discrete optimal control problem \eqref{eq:time-discrete_ulp}.
\begin{lemma}
	\label{lem:p_tau_has_minimizer}
	There exists a global minimizer of \eqref{eq:time-discrete_ulp}.
\end{lemma}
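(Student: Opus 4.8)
The plan is to run the direct method of the calculus of variations, following the proof of \autoref{thm:existence_continuous} almost verbatim; the one structural difference is that for fixed $\tau$ the weak continuity of the control-to-state map is obtained more cheaply, directly from the compactness of $E$. First I would record that $\Uadtau$ is nonempty: by \autoref{asm:psi_lsc}~(3) the set $\Uad$ is nonempty, and \autoref{asm:approximability_of_Uad} then yields at least one element of $\Uadtau$. Viewing $\Uadtau$ as a closed convex subset of $U^N$, which is identified via \eqref{eq:linear_interpolation} with a closed subspace of the Hilbert space $H_{\{0\}}^1(0,T;U)$, it is weakly closed. Since $\psi$ is bounded from below by \autoref{asm:psi_lsc}~(1) and $\nu>0$, the reduced objective $\bg^\tau\mapsto\psi(\bu^\tau)+\frac{\nu}{2}\norm{\bg^\tau}_{H^1(0,T;U)}^2$ with $(\bSigma^\tau,\bu^\tau)=\GG^\tau(E\bg^\tau)$ is bounded from below on $\Uadtau$, so its infimum $j\in\R$ exists; I pick a minimizing sequence $\{\bg^\tau_k\}\subset\Uadtau$.

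The norm term forces $\{\bg^\tau_k\}$ to be bounded in $H^1(0,T;U)$, so after passing to a subsequence $\bg^\tau_k\weakly\bg^\tau$ in $H^1(0,T;U)$ with $\bg^\tau\in\Uadtau$ by weak closedness. The key step — and the only place where the specific structure enters — is to pass to the limit in the forward system \eqref{eq:Lower-Level_Problem_semidiscretized}. Since $U^N$ carries the finite product topology, weak convergence of the interpolants in $H_{\{0\}}^1(0,T;U)$ is equivalent to componentwise weak convergence $\bg^\tau_{k,i}\weakly\bg^\tau_i$ in $U$ (the interpolation norm is equivalent to $(\sum_i\norm{\cdot}_U^2)^{1/2}$ on this subspace). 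Because $E:U\to V'$ is compact, this gives $E\bg^\tau_{k,i}\to E\bg^\tau_i$ strongly in $V'$ for each $i$, hence $E\bg^\tau_k\to E\bg^\tau$ strongly in $(V')^N$. The continuity of the solution map $\GG^\tau$ of \eqref{eq:Lower-Level_Problem_semidiscretized} then yields $(\bSigma^\tau_k,\bu^\tau_k)=\GG^\tau(E\bg^\tau_k)\to\GG^\tau(E\bg^\tau)=(\bSigma^\tau,\bu^\tau)$ strongly in $(S^2\times V)^N$, hence strongly in $H_{\{0\}}^1(0,T;S^2\times V)$ via the linear interpolation identification. (One could alternatively mimic the argument of \autoref{thm:weak_continuity_solution_operator} via \autoref{lem:limsup}, but the compactness of $E$ makes this detour unnecessary when $\tau$ is fixed.)

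Finally I would combine continuity and weak lower semicontinuity of the objective: $\psi$ is continuous on $H^1(0,T;V)$, so $\psi(\bu^\tau_k)\to\psi(\bu^\tau)$, while $\norm{\cdot}_{H^1(0,T;U)}^2$ is convex and continuous, hence weakly lower semicontinuous. Therefore
\[
	j=\liminf_{k\to\infty}\Big(\psi(\bu^\tau_k)+\frac{\nu}{2}\norm{\bg^\tau_k}_{H^1(0,T;U)}^2\Big)\ge\psi(\bu^\tau)+\frac{\nu}{2}\norm{\bg^\tau}_{H^1(0,T;U)}^2\ge j,
\]
the last inequality using $\bg^\tau\in\Uadtau$ and $j=\inf$. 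Hence $\bg^\tau$, together with $(\bSigma^\tau,\bu^\tau)=\GG^\tau(E\bg^\tau)$, is a global minimizer of \eqref{eq:time-discrete_ulp}. There is essentially no hard obstacle here; the only point requiring a little care is the weak-to-strong passage $E\bg^\tau_k\to E\bg^\tau$ in $(V')^N$ together with the correct identification of the weak topology on the subspace $U^N\subset H_{\{0\}}^1(0,T;U)$, both of which are handled by the compactness of $E$ and the finite-dimensionality in time.
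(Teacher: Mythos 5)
Your proof is correct and follows essentially the same route as the paper's: both arguments hinge on the compactness of $E : U^N \to (V')^N$ combined with the (Lipschitz) continuity of $\GG^\tau$ — which the paper obtains by citing a result in the companion paper \cite{Wachsmuth2011:2} and you take for granted — to upgrade weak convergence of a minimizing sequence of controls to strong convergence of the states, after which the direct method proceeds exactly as in \autoref{thm:existence_continuous}. Your explicit remarks about the nonemptiness of $\Uadtau$ and the equivalence of the interpolation norm with the product norm on $U^N$ are small points the paper leaves implicit but add no new content.
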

\begin{proof}
	In \citeautoref{Wachsmuth2011:2:subsec:deriv_of_reg} we show in a more general framework that the solution operator $\GG^\tau$ of \eqref{eq:Lower-Level_Problem_semidiscretized} is Lipschitz continuous from $(V')^N$ to $(S^2\times V)^N$.
	Since the control operator $E$ from $U^N$ into $(V')^N$ is compact, $\GG^\tau$ is compact from $U^N$ to $(S^2\times V)^N$.
	Now we proceed similarly as in the proof of \autoref{thm:existence_continuous}.
\end{proof}

Now, we are going to prove the approximation properties.
\begin{lemma}
	\label{thm:continuous_approximation_with_global_solutions}
	Suppose that \autoref{asm:approximability_of_Uad} is fulfilled.
	Let $\{\tau\}$ be a sequence tending to $0$ and let $\bg^\tau$ denote a global solution
	to \eqref{eq:time-discrete_ulp}.
	\begin{enumerate}
		\item 
			Then there exists an accumulation point $\bg$ of $\{\bg^\tau\}$ in $H^1(0,T;U)$ and
		\item
			every weak accumulation point of $\{\bg^\tau\}$ in $H^1(0,T;U)$ is in fact a strong accumulation point in $H^1(0,T;U)$ and a global solution of \eqref{eq:continuous_ulp}.
	\end{enumerate}
\end{lemma}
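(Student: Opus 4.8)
The plan is to carry out the standard argument for the approximation of minimizers of optimal control problems (as in \cite[Section~3.2]{HerzogMeyerWachsmuth2009:2}), feeding \emph{strongly} converging loads into \autoref{thm:strong_convergence} for recovery sequences and \emph{weakly} converging loads into \autoref{thm:weak_convergence_of_time_discretization} for the sequence of time-discrete optima, and then combining this with the weak lower semicontinuity from \autoref{asm:psi_lsc}. Throughout I would abbreviate the reduced objectives by $f(\bg) = F(\GG^\bu(E\bg),\bg)$ and $f^\tau(\bg^\tau) = F(\bu^\tau,\bg^\tau)$, where $(\bSigma^\tau,\bu^\tau) = \GG^\tau(E\bg^\tau)$.

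First I would prove boundedness of $\{\bg^\tau\}$ in $H^1(0,T;U)$, which yields a weak accumulation point and hence most of~(1). Fix any $\bg_0\in\Uad$ and, by \autoref{asm:approximability_of_Uad}, pick $\bg_0^\tau\in\Uadtau$ with $\bg_0^\tau\to\bg_0$ in $H^1(0,T;U)$; since $E$ is bounded, $E\bg_0^\tau\to E\bg_0$ in $H^1(0,T;V')$, so \autoref{thm:strong_convergence} gives the convergence of the associated time-discrete states in $H^1(0,T;S^2\times V)$, and continuity of $\psi$ and of the norm on $H^1(0,T;U)$ shows $f^\tau(\bg_0^\tau)\to f(\bg_0)$; in particular $f^\tau(\bg_0^\tau)$ is bounded. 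Global optimality of $\bg^\tau$ for \eqref{eq:time-discrete_ulp} gives $f^\tau(\bg^\tau)\le f^\tau(\bg_0^\tau)\le C$, and because $\psi$ is bounded from below and $\nu>0$, the term $\frac{\nu}{2}\norm{\bg^\tau}_{H^1(0,T;U)}^2$ and hence $\{\bg^\tau\}$ is bounded. As $H^1(0,T;U)$ is a Hilbert space, some subsequence converges weakly, and once~(2) is proved this accumulation point is automatically strong, which completes~(1).

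For~(2), let $\bg^\tau\weakly\bg$ along a subsequence. Weak closedness of the convex closed set $\Uad\supseteq\Uadtau$ gives $\bg\in\Uad$. Since $E$ is linear and bounded, it is weakly sequentially continuous, so $E\bg^\tau\weakly E\bg$ in $H^1(0,T;V')$, and \autoref{thm:weak_convergence_of_time_discretization} yields $(\bSigma^\tau,\bu^\tau)\weakly\GG(E\bg)$ in $H^1(0,T;S^2\times V)$, in particular $\bu^\tau\weakly\GG^\bu(E\bg)$ in $H^1(0,T;V)$. Weak lower semicontinuity of $\psi$ and of $\norm{\cdot}_{H^1(0,T;U)}^2$ then gives $f(\bg)\le\liminf_\tau f^\tau(\bg^\tau)$. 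For the opposite estimate I would take a global minimizer $\bg^\star$ of \eqref{eq:continuous_ulp} (which exists by \autoref{thm:existence_continuous}) and a recovery sequence $(\bg^\star)^\tau\in\Uadtau$, $(\bg^\star)^\tau\to\bg^\star$, as in the boundedness step, so that $f^\tau((\bg^\star)^\tau)\to f(\bg^\star)$; optimality of $\bg^\tau$ gives $f^\tau(\bg^\tau)\le f^\tau((\bg^\star)^\tau)$, whence
\[
	f(\bg) \le \liminf_\tau f^\tau(\bg^\tau) \le \limsup_\tau f^\tau(\bg^\tau) \le f(\bg^\star) \le f(\bg),
\]
the last inequality since $\bg\in\Uad$ and $\bg^\star$ is globally optimal. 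Hence $f(\bg)=f(\bg^\star)$, so $\bg$ is a global solution of \eqref{eq:continuous_ulp}, and moreover $\lim_\tau f^\tau(\bg^\tau)=f(\bg)$. To upgrade the weak convergence of $\bg^\tau$ to strong convergence, I would use that along any subsequence with $\norm{\bg^\tau}_{H^1(0,T;U)}^2\to L:=\limsup_\tau\norm{\bg^\tau}_{H^1(0,T;U)}^2$ one has $\psi(\bu^\tau)=f^\tau(\bg^\tau)-\frac{\nu}{2}\norm{\bg^\tau}_{H^1(0,T;U)}^2\to f(\bg)-\frac{\nu}{2}L$, so weak lower semicontinuity of $\psi$ forces $f(\bg)-\frac{\nu}{2}L\ge\psi(\GG^\bu(E\bg))=f(\bg)-\frac{\nu}{2}\norm{\bg}_{H^1(0,T;U)}^2$, i.e.\ $L\le\norm{\bg}_{H^1(0,T;U)}^2$; together with $L\ge\liminf_\tau\norm{\bg^\tau}_{H^1(0,T;U)}^2\ge\norm{\bg}_{H^1(0,T;U)}^2$ this gives $\norm{\bg^\tau}_{H^1(0,T;U)}\to\norm{\bg}_{H^1(0,T;U)}$, and with $\bg^\tau\weakly\bg$ in the Hilbert space $H^1(0,T;U)$ we conclude $\bg^\tau\to\bg$ strongly.

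The argument is essentially routine; the main point requiring care is the bookkeeping of the convergence modes of the loads — strong convergence of $E(\bg^\star)^\tau$ via \autoref{thm:strong_convergence} for the $\limsup$ bound, and only weak convergence of $E\bg^\tau$ via \autoref{thm:weak_convergence_of_time_discretization} for the $\liminf$ bound. It is essential that $E:H^1(0,T;U)\to H^1(0,T;V')$ is only weakly, not strongly, sequentially continuous, so the $\liminf$ inequality genuinely requires the weak-convergence result for the time discretization and cannot be obtained merely from the compactness of $E$ acting pointwise in time.
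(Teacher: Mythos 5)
Your proof is correct and follows essentially the same route as the paper: boundedness via a recovery sequence and global optimality, weak convergence of the discrete states by \autoref{thm:weak_convergence_of_time_discretization}, lower semicontinuity for the $\liminf$ bound, recovery sequences via \autoref{thm:strong_convergence} for the $\limsup$ bound, and the norm-convergence squeeze to upgrade weak to strong convergence. The only cosmetic difference is that you compare against a fixed global minimizer $\bg^\star$ (invoking \autoref{thm:existence_continuous}), whereas the paper compares against an arbitrary $\tilde\bg\in\Uad$ and then inserts $\tilde\bg=\bg$, avoiding the dependence on the existence result.
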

\begin{proof}
	By \autoref{asm:psi_lsc}, we can choose $\bg_0 \in \Uad$.
	By \autoref{asm:approximability_of_Uad}, there exists a sequence $\bg^\tau_0 \in \Uadtau$ which converges towards $\bg_0$ in $H_{\{0\}}^1(0,T;U)$.
	Hence, by \eqref{eq:a-priori_time} the corresponding displacements $\bu^\tau_0$ converge towards $\bu_0$ in $H_{\{0\}}^1(0,T; V)$.
	Together with the continuity of $\psi$, this implies the convergence of $F(\bu^\tau_0, \bg^\tau_0)$.
	Since $\bg^\tau$ is a global optimum of \eqref{eq:time-discrete_ulp}, we have
	$F(\bu^\tau, \bg^\tau)  \le F(\bu^\tau_0, \bg^\tau_0)$.
	Hence, $\{\bg^\tau\}$ is bounded in $H^1(0,T;U)$.
	This implies the existence of a weakly convergent subsequence in this space.
	Therefore, assertion~(1) follows by assertion~(2).

	To prove assertion~(2), let $\{\bg^\tau\}$ converge weakly towards $\bg$ in $H^1(0,T;U)$ as $\tau \searrow 0$.
	We denote by $(\bSigma^\tau, \bu^\tau) = \GG^\tau(E\bg^\tau)$ the (time-discrete) solution to \eqref{eq:Lower-Level_Problem_semidiscretized} and by $(\bSigma, \bu) = \GG(E\bg)$ the solution to \eqref{eq:Lower-Level_Problem}.
	Due to the weak convergence proven in \autoref{thm:weak_convergence_of_time_discretization}, we have $\bu^\tau \weakly \bu$ in $H^1(0, T; V)$.

	Let $\tilde\bg \in \Uad$ with corresponding displacement $\tilde\bu$ be arbitrary.
	By \autoref{asm:approximability_of_Uad}, there is $\tilde\bg^\tau \in \Uadtau$, such that  $\tilde\bg^\tau \to \tilde\bg$ in $H^1(0, T; L^2(\Gamma_N; \R^d))$.
	We denote the corresponding displacements by $\tilde\bu^\tau$. By \autoref{thm:strong_convergence} we infer $\tilde\bu^\tau \to \tilde\bu$.
	We have
	\begin{align*}
		F( \bu, \bg )
		& \le \liminf F( \bu^\tau, \bg^\tau )
		&& \text{by weak lower semicontinuity of $F$}
		\\
		& \le \liminf F( \tilde\bu^\tau, \tilde\bg^\tau )
		&& \text{by global optimality of $(\bu^\tau, \bg^\tau)$}
		\\
		& = F( \tilde\bu, \tilde\bg ).
		&& \text{by convergence of $(\tilde\bu^\tau, \tilde\bg^\tau)$}
	\end{align*}
	This shows that $\bg$ is a global optimal solution.
	Inserting $\tilde\bg = \bg$ yields the convergence of norms and hence the strong convergence of $\bg^\tau$ in $H^1(0,T; U)$.
\end{proof}

\begin{lemma}
	\label{thm:continuous_approximation_with_local_solutions}
	Suppose \autoref{asm:approximability_of_Uad} is fulfilled.
	Let $\bg$ be a strict local optimum of \eqref{eq:continuous_ulp} w.r.t.\ the topology of $H^1(0,T;U)$.
	Then, for every sequence $\{\tau\}$ tending to $0$, there is a sequence $\{\bg^\tau\}$ of local solutions to \hyperref[eq:time-discrete_ulp]{\textup{($\mathbf{P}^\tau$)}}, such that $\bg^\tau \to \bg$ strongly in $H^1(0,T;U)$ as $\tau \searrow 0$.
\end{lemma}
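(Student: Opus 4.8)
The plan is to use the classical localization trick for approximating local minimizers, reducing everything to the global result \autoref{thm:continuous_approximation_with_global_solutions}. Since $\bg$ is a \emph{strict} local optimum, there is a radius $r > 0$ such that $F(\GG^\bu(E\bg),\bg) < F(\GG^\bu(E\tilde\bg),\tilde\bg)$ for all $\tilde\bg \in \Uad$ with $0 < \norm{\tilde\bg - \bg}_{H^1(0,T;U)} \le r$; in particular $\bg$ is the \emph{unique} global minimizer of \eqref{eq:continuous_ulp} subject to the extra constraint $\norm{\tilde\bg - \bg}_{H^1(0,T;U)} \le r$. I would then introduce the auxiliary time-discrete problems obtained from \hyperref[eq:time-discrete_ulp]{\textup{($\mathbf{P}^\tau$)}} by adding the ball constraint $\norm{\bg^\tau - \bg}_{H^1(0,T;U)} \le r$. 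For $\tau$ small enough the admissible set $\{\bg^\tau \in \Uadtau : \norm{\bg^\tau - \bg}_{H^1(0,T;U)} \le r\}$ is nonempty (apply \autoref{asm:approximability_of_Uad} to $\bg$ itself), convex and closed, and $\GG^\tau \circ E$ is still compact on it, so the argument of \autoref{lem:p_tau_has_minimizer} produces a global minimizer $\bg^\tau$ of each auxiliary problem.

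Second, I would rerun the proof of \autoref{thm:continuous_approximation_with_global_solutions} for this restricted family. The sequence $\{\bg^\tau\}$ lies in the ball, hence is bounded, so a subsequence satisfies $\bg^\tau \weakly \bg^*$ in $H^1(0,T;U)$ with $\bg^* \in \Uad$ and $\norm{\bg^* - \bg}_{H^1(0,T;U)} \le r$ by weak closedness. Using \autoref{thm:weak_convergence_of_time_discretization} the corresponding displacements converge weakly, and for an arbitrary competitor $\tilde\bg$ in the restricted continuous admissible set one constructs $\tilde\bg^\tau \in \Uadtau$ inside the ball with $\tilde\bg^\tau \to \tilde\bg$ (using \autoref{asm:approximability_of_Uad}, together with a shrinking-towards-$\bg$ step, continuity of $F$ via \autoref{asm:psi_lsc} and a diagonal argument to repair a possible violation of the ball constraint when $\norm{\tilde\bg - \bg}_{H^1(0,T;U)} = r$) and $\tilde\bu^\tau \to \tilde\bu$ via \eqref{eq:a-priori_time} and \autoref{thm:strong_convergence}. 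Weak lower semicontinuity of $F$ and global optimality of $\bg^\tau$ then give $F(\GG^\bu(E\bg^*),\bg^*) \le F(\GG^\bu(E\tilde\bg),\tilde\bg)$, so $\bg^*$ is a global minimizer of the restricted continuous problem, whence $\bg^* = \bg$ by the strict local optimality chosen above. Inserting $\tilde\bg = \bg$ (approximated by the sequence from \autoref{asm:approximability_of_Uad}) yields $\norm{\bg^\tau}_{H^1(0,T;U)} \to \norm{\bg}_{H^1(0,T;U)}$, so the weak convergence improves to strong convergence $\bg^\tau \to \bg$ in $H^1(0,T;U)$; since the limit is independent of the subsequence, the whole sequence converges.

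Third, I would remove the artificial constraint: because $\bg^\tau \to \bg$ strongly, eventually $\norm{\bg^\tau - \bg}_{H^1(0,T;U)} < r$, so the ball constraint is inactive at $\bg^\tau$. A global minimizer of the constrained discrete problem whose ball constraint is inactive is a local minimizer of \hyperref[eq:time-discrete_ulp]{\textup{($\mathbf{P}^\tau$)}} with respect to the $H^1(0,T;U)$-topology, which is precisely the assertion. The main obstacle I expect is the bookkeeping in the repaired approximation step — ensuring that competitors on the sphere $\norm{\tilde\bg - \bg}_{H^1(0,T;U)} = r$ can still be reached by discrete competitors inside the ball — and double-checking that the existence/compactness argument of \autoref{lem:p_tau_has_minimizer} survives the extra convex constraint; the rest is a faithful transcription of the proof of \autoref{thm:continuous_approximation_with_global_solutions}.
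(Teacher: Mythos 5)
Your proposal is correct and follows essentially the same route as the paper: restrict to a closed ball $B_\varepsilon(\bg)$ where strictness makes $\bg$ the unique global minimizer, verify that the restricted admissible set $\Uad \cap B_\varepsilon(\bg)$ still satisfies \autoref{asm:approximability_of_Uad} (this is where the boundary case $\norm{\tilde\bg-\bg}=\varepsilon$ requires the "shrinking-towards-$\bg$" convex-combination repair you flag), invoke the global result \autoref{thm:continuous_approximation_with_global_solutions} for the auxiliary problem, and finally observe that the strong convergence $\bg^\tau\to\bg$ makes the ball constraint inactive so that the $\bg^\tau$ are local minimizers of \hyperref[eq:time-discrete_ulp]{\textup{($\mathbf{P}^\tau$)}}. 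The only cosmetic difference is that the paper packages the repair step as an explicit convex combination $\tilde\bg^\tau = (1-\eta^\tau)\hat\bg^\tau + \eta^\tau\bg^\tau$ with a carefully tuned $\eta^\tau \to 0$ and then applies \autoref{thm:continuous_approximation_with_global_solutions} verbatim, whereas you describe rerunning its proof; the substance is identical.
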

\begin{proof}
	\textit{Step~(1):}
	Let $\varepsilon > 0$, such that $\bg$ is the unique global optimum in the closed ball $B_\varepsilon(\bg)$ with radius $\varepsilon$ centered at $\bg$.
	We define $\hatUad = \Uad \cap B_\varepsilon(\bg)$.

	\textit{Step~(2):}
	We check that \autoref{asm:approximability_of_Uad} is fulfilled for $\hatUad$.
	Clearly we have $\bg \in \hatUad$.
	Let $\tilde\bg \in \hatUad$ be arbitrary.
	We construct a sequence $\{\tilde\bg^\tau\} \subset \hatUad$, $\tilde\bg^\tau \in U^N$, such that $\tilde\bg^\tau \to \tilde\bg$.
	Since $\Uad$ satisfies \autoref{asm:approximability_of_Uad}, there are sequences of time-discrete functions $(\bg^\tau), (\hat\bg^\tau) \subset \Uad$, converging towards $\bg$ and $\tilde\bg$, respectively.

	If $\norm{\tilde\bg - \bg}_{H^1(0,T;U)} < \varepsilon$, the convergence $\hat\bg^\tau \to \tilde\bg$ implies $\hat\bg^\tau \in B_\varepsilon(\bg)$, and hence $\hat\bg^\tau \in \hatUad \cap \Uadtau$ for sufficiently small $\tau$.

	Otherwise, if $\norm{\tilde\bg-\bg}_{H^1(0,T;U)} = \varepsilon$, we obtain $\lim \norm{\bg - \hat\bg^\tau}_{H^1(0,T;U)} = \varepsilon$.
	Therefore, there exists $\tau_0$, such that for all $\tau \le \tau_0$, we have $\norm{\bg - \bg^\tau}_{H^1(0,T;U)} \le \varepsilon/2$ and $\norm{\bg - \hat\bg^\tau}_{H^1(0,T;U)} > \varepsilon/2$.
	Now, we construct a convex combination $\tilde\bg^\tau$ of $\bg^\tau$ and $\hat\bg^\tau$ which belongs to $B_\varepsilon(\bg)$ and approximates $\tilde\bg$.
	We define the sequence $\tilde\bg^\tau$ by
	\begin{equation*}
		\tilde\bg^\tau = (1-\eta^\tau) \, \hat\bg^\tau + \eta^\tau \, \bg^\tau,
		\quad\text{with }
		\eta^\tau = \max\left(0, \frac{\norm{\bg - \hat\bg^\tau}_{H^1(0,T;U)} - \varepsilon}{\norm{\bg - \hat\bg^\tau}_{H^1(0,T;U)} - \varepsilon/2} \right) \in [0,1].
	\end{equation*}
	In case $\norm{\bg - \hat\bg^\tau}_{H^1(0,T;U)} > \varepsilon$, we find (all norms are those of $H^1(0,T;U)$)
	\begin{align*}
		\norm{\bg - \tilde\bg^\tau}
		& \le (1 - \eta^\tau) \, \norm{\bg - \hat\bg^\tau} + \eta^\tau \, \norm{\bg - \bg^\tau} \\
		& = \frac{1}{\norm{\bg - \hat\bg^\tau} - \varepsilon/2} \, \Bigh(){ \frac\varepsilon2 \, \norm{\bg - \hat\bg^\tau} + (\norm{\bg - \hat\bg^\tau} - \varepsilon) \, \norm{\bg - \bg^\tau} } \\
		& \le \frac{1}{\norm{\bg - \hat\bg^\tau} - \varepsilon/2} \, \Bigh(){ \frac\varepsilon2 \, \norm{\bg - \hat\bg^\tau} + (\norm{\bg - \hat\bg^\tau} - \varepsilon) \, \frac\varepsilon2 } \\
		& = \varepsilon.
	\end{align*}
	This shows $\norm{\bg - \tilde\bg^\tau}_{H^1(0,T;U)} \le \varepsilon$.
	Moreover, $\eta^\tau \in [0,1]$ implies $\tilde\bg^\tau \in \Uad$.
	Therefore, $\tilde\bg^\tau \in \hatUad$.
	Further, $\lim \norm{\bg - \hat\bg^\tau}_{H^1(0,T;U)} = \varepsilon$ implies $\eta^\tau \to 0$ and hence $\tilde\bg^\tau \to \tilde\bg$.
	This shows that $\hatUad$ satisfies \autoref{asm:approximability_of_Uad}.

	\textit{Step~(3):}
	We define the auxiliary problem
	\begin{equation}
		\label{eq:ulp_aux}
		\tag{$\mathbf{P}_{\bg,\varepsilon}$}
		\left.
			\begin{aligned}
				\text{Minimize}\quad & F(\bu,\bg) = \psi( \bu ) + \frac{\nu}{2} \norm{\bg}_{H^1(0,T;U)}^2 \\
				\text{such that}\quad & (\bSigma,\bu) = \GG(E \bg) \\
				\text{and}\quad & \bg \in \Uad \cap B_\varepsilon(\bg).
			\end{aligned}
			\quad
		\right\}
	\end{equation}
	Since $\bg$ is the unique minimum in $\hatUad = \Uad \cap B_\varepsilon(\bg)$, it is the unique global minimizer of \eqref{eq:ulp_aux}.
	Invoking \autoref{thm:continuous_approximation_with_global_solutions} yields the existence of a sequence $\bg^\tau$ of global solutions of the associated time-discrete problem $\textup{($\mathbf{P}_\varepsilon^\tau$)}$, such that $\bg^\tau \to \bg$ in $H^1(0,T;U)$.

	\textit{Step~(4):}
	It remains to check that $\bg^\tau$ are local solutions of \hyperref[eq:time-discrete_ulp]{\textup{($\mathbf{P}^\tau$)}}.
	The convergence $\bg^\tau \to \bg$ in $H^1(0,T;U)$ implies that there is a $\tau_0$ such that $\norm{\bg - \bg^\tau}_{H^1(0,T;U)} \le \varepsilon/2$ holds for all $\tau \le \tau_0$.
	Let $\tau \le \tau_0$ and $\tilde\bg^\tau \in B_{\varepsilon/2}(\bg^\tau) \cap \Uadtau$ be arbitrary.
	By the triangle inequality we infer $\tilde\bg^\tau \in B_\varepsilon(\bg)$. This implies $\tilde\bg^\tau \in \hatUad \cap \Uadtau$. The global optimality of $\bg^\tau$ implies $F(\GG^{\tau, \bu}(\bg^\tau), \bg^\tau) \le F(\GG^{\tau, \bu}(\tilde\bg^\tau), \tilde\bg^\tau)$.
	Hence, $\bg^\tau$ is a local optimum of \hyperref[eq:time-discrete_ulp]{\textup{($\mathbf{P}^\tau$)}} in the neighborhood $B_{\varepsilon/2}(\bg^\tau)$.
\end{proof}

Finally, we address the approximability of a local minimum, which is not assumed to be strict.
Let $\bg$ be a local optimum of \eqref{eq:continuous_ulp} w.r.t.\ the topology of $H^1(0,T;U)$.
We define the modified problem, see also \cite{CasasTroeltzsch2002:1,Barbu1981:1},
\begin{equation}
	\label{eq:time-discrete_ulp_mod}
	\tag{$\mathbf{P}_{\bg}$}
	\left.
		\begin{aligned}
			\text{Minimize}\quad & F_\bg(\bu,\tilde\bg) = \psi( \bu ) + \frac{\nu}{2} \norm{\tilde\bg}_{H^1(0,T;U)}^2 + \frac12 \norm{\tilde\bg - \bg}_{H^1(0,T;U)}^2 \\
			\text{such that}\quad & (\bSigma, \bu) = \GG(E\tilde\bg) \\
			\text{and}\quad & \tilde\bg \in \Uad
		\end{aligned}
	\right\}
\end{equation}
Clearly, $\bg$ becomes a \emph{strict} local optimum of \eqref{eq:time-discrete_ulp_mod}. Analogously to \eqref{eq:time-discrete_ulp} we define the time-discrete approximation \textup{($\mathbf{P}_{\bg}^{\tau}$)}.
\begin{theorem}
	\label{thm:continuous_approximation_with_local_solutions_mod}
	Suppose \autoref{asm:approximability_of_Uad}  is fulfilled.
	Let $\bg$ be a local optimum of \eqref{eq:continuous_ulp} w.r.t.\ the topology of $H^1(0,T;U)$.
	Then, for every sequence $\{\tau\}$ tending to $0$, there is a sequence $(\bg^\tau)$ of local solutions to \textup{($\mathbf{P}_{\bg}^\tau$)}, such that $\bg^\tau \to \bg$ strongly in $H^1(0,T;U)$ as $\tau \searrow 0$.
\end{theorem}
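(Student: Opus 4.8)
The plan is to reduce the claim to \autoref{thm:continuous_approximation_with_local_solutions}, which already covers \emph{strict} local optima; the role of the penalty term $\frac12\,\norm{\tilde\bg-\bg}_{H^1(0,T;U)}^2$ appearing in \eqref{eq:time-discrete_ulp_mod} is precisely to turn the (possibly non-strict) local optimum $\bg$ of \eqref{eq:continuous_ulp} into a strict local optimum of \eqref{eq:time-discrete_ulp_mod}. So first I would fix a radius $\rho>0$ such that $F(\GG^\bu(E\bg),\bg)\le F(\GG^\bu(E\tilde\bg),\tilde\bg)$ for all $\tilde\bg\in\Uad\cap B_\rho(\bg)$, where $B_\rho(\bg)$ denotes the closed ball of radius $\rho$ about $\bg$; note that $\bg\in\Uad$ since it is a local optimum of \eqref{eq:continuous_ulp}. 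Then, for every such $\tilde\bg$ with $\tilde\bg\neq\bg$,
\begin{align*}
	F_\bg(\GG^\bu(E\tilde\bg),\tilde\bg)
	&= F(\GG^\bu(E\tilde\bg),\tilde\bg) + \frac12\,\norm{\tilde\bg-\bg}_{H^1(0,T;U)}^2 \\
	&\ge F(\GG^\bu(E\bg),\bg) + \frac12\,\norm{\tilde\bg-\bg}_{H^1(0,T;U)}^2 \\
	&> F_\bg(\GG^\bu(E\bg),\bg),
\end{align*}
so $\bg$ is the unique minimizer of \eqref{eq:time-discrete_ulp_mod} over $\Uad\cap B_\rho(\bg)$, i.e.\ a strict local optimum in the topology of $H^1(0,T;U)$.

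Next I would check that \eqref{eq:time-discrete_ulp_mod} and its time-discrete counterpart \textup{($\mathbf{P}_\bg^\tau$)} fall within the scope of the proofs of \autoref{lem:p_tau_has_minimizer}, \autoref{thm:continuous_approximation_with_global_solutions} and \autoref{thm:continuous_approximation_with_local_solutions}. Those arguments use of the objective only that it is bounded from below, continuous and weakly lower semicontinuous on $H^1(0,T;V)\times H^1(0,T;U)$, together with the coercivity in $\bg$ supplied by the term $\frac\nu2\,\norm{\cdot}_{H^1(0,T;U)}^2$ (needed to extract bounded minimizing sequences). The objective $F_\bg$ differs from $F$ only by the additional summand $\frac12\,\norm{\tilde\bg-\bg}_{H^1(0,T;U)}^2$, which is convex and continuous, hence weakly lower semicontinuous, in $\tilde\bg$; so $F_\bg$ retains all of these properties. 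Consequently \textup{($\mathbf{P}_\bg^\tau$)} has a global minimizer for every $\tau$ (argument of \autoref{lem:p_tau_has_minimizer}), the global-approximation statement of \autoref{thm:continuous_approximation_with_global_solutions} holds for the pair \eqref{eq:time-discrete_ulp_mod} and \textup{($\mathbf{P}_\bg^\tau$)} --- invoking, just as there, the a-priori bound \eqref{eq:a-priori_time}, the weak convergence of \autoref{thm:weak_convergence_of_time_discretization}, the strong convergence of \autoref{thm:strong_convergence} and \autoref{asm:approximability_of_Uad} --- and Steps~(1)--(4) of \autoref{thm:continuous_approximation_with_local_solutions}, in particular the convex-combination construction in Step~(2), which never refers to the objective, carry over verbatim for the strict local optimum $\bg$ of \eqref{eq:time-discrete_ulp_mod}.

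Finally, applying this variant of \autoref{thm:continuous_approximation_with_local_solutions} to $\bg$ produces, for every sequence $\tau\searrow0$, a sequence $(\bg^\tau)$ of local solutions of \textup{($\mathbf{P}_\bg^\tau$)} with $\bg^\tau\to\bg$ strongly in $H^1(0,T;U)$, which is the assertion. I expect the only genuine work to lie in the bookkeeping of the second step, namely confirming that none of the estimates in the proofs within \autoref{subsec:approx_by_time_discrete} tacitly exploited the exact form $\frac\nu2\,\norm{\cdot}^2$ of the Tikhonov term beyond its coercivity, continuity and weak lower semicontinuity; everything else is routine.
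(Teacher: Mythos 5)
Your proposal is correct and follows the same approach as the paper: the paper's proof is the one-line observation that the additional penalty term is weakly lower semicontinuous, so the argument of \autoref{thm:continuous_approximation_with_local_solutions} carries over (with the paper already noting before the theorem that $\bg$ becomes a strict local optimum of \eqref{eq:time-discrete_ulp_mod}). You have simply spelled out the bookkeeping that the paper leaves implicit.
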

\begin{proof}
	Since the additional term $\norm{\tilde\bg - \bg}_{H^1(0,T;U)}^2$ is weakly lower semicontinuous, this follows analogously to \autoref{thm:continuous_approximation_with_local_solutions}.
\end{proof}
Due to this theorem, we are able to derive \emph{necessary} optimality conditions for \eqref{eq:continuous_ulp}
by passing to the limit with the optimality conditions of \hyperref[eq:time-discrete_ulp_mod]{\textup{($\mathbf{P}_{\bg}^{\tau}$)}}, see \citeautoref{Wachsmuth2011:4:sec:weak_stationarity_quasistatic}.
% We remark that
% the additional term in the gradient equation of the time-discrete problem vanishes as $\tau \searrow 0$ and $\bg^\tau \to \bg$.

%%fakesection: \appendix
\appendix
\section{A convergence result in Bochner-Sobolev spaces}
\label{appendix:bochner}
For the proof of \autoref{thm:strong_convergence} we need the following result.
\short{
	It is a consequence of \cite[Theorems~8.5, 8.7]{Krejci1998}, see also the appendix of the extended version of this paper \cite{Wachsmuth2011:1_extended}.
}{It is a consequence of \cite[Theorems~8.5, 8.7]{Krejci1998} which are cited as \hyperref[thm:convergence_in_L^1]{Theorems~\ref*{thm:convergence_in_L^1}} and \ref{thm:lebesgue_dominated_convergence} below.}
\begin{lemma}
	\label{cor:convergence_in_W^1,p}
	Let $X$ be a Hilbert space and $p \in [1,\infty)$ be given. Let $\{u_n\} \subset W^{1,p}(0, T; X)$, $\{g_n\} \subset L^p(0, T; \R)$ be given sequences for $n \in \N \cup \{0\}$ such that
	\begin{subequations}
		\label{eq:condition_of_cor:convergence_in_W^1,p}
		\begin{align}
			u_n &\mrep{{}\to u_0}{{}\le g_n(t)} \text{ in } L^\infty(0, T; X)
			\label{eq:condition_of_cor:convergence_in_W^1,p_1} \\
			g_n &\mrep{{}\to g_0}{{}\le g_n(t)} \text{ in } L^p(0, T; \R),
			\label{eq:condition_of_cor:convergence_in_W^1,p_2} \\
			\norm{ \dot u_n(t) }_X &\le g_n(t) \text{ a.e.\ in } (0,T), \text{ for all } n \in \N, 
			\label{eq:condition_of_cor:convergence_in_W^1,p_3} \\
			\norm{ \dot u_0(t) }_X &= \mrep{g_0(t)}{g_n(t)} \text{ a.e.\ in } (0,T).
			\label{eq:condition_of_cor:convergence_in_W^1,p_4}
		\end{align}
	\end{subequations}
	Then $u_n \to u_0$ in $W^{1,p}(0, T; X)$.
\end{lemma}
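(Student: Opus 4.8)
The plan is to split off the trivial part first: since $u_n \to u_0$ holds in $L^\infty(0,T;X)$ and hence in $L^p(0,T;X)$, it suffices to prove $\dot u_n \to \dot u_0$ in $L^p(0,T;X)$, and then the second norm in \eqref{eq:norms_in_w1p} yields $u_n \to u_0$ in $W^{1,p}(0,T;X)$. The substance of the statement is therefore a strong-convergence result for the derivatives, obtained by combining weak convergence of $\dot u_n$ with the pointwise bound \eqref{eq:condition_of_cor:convergence_in_W^1,p_3}, the saturation \eqref{eq:condition_of_cor:convergence_in_W^1,p_4}, and the convergence of the dominating functions \eqref{eq:condition_of_cor:convergence_in_W^1,p_2}. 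This is exactly the mechanism behind Krejčí's convergence theorem in $L^1$ and its $L^p$ (generalized dominated convergence) version, \cite[Theorems~8.5 and 8.7]{Krejci1998}, so the proof essentially amounts to verifying their hypotheses.

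First I would establish that $\dot u_n \weakly \dot u_0$ in $L^p(0,T;X)$. From \eqref{eq:condition_of_cor:convergence_in_W^1,p_4} and $g_0 \in L^p(0,T;\R)$ we get $u_0 \in W^{1,p}(0,T;X)$. By \eqref{eq:condition_of_cor:convergence_in_W^1,p_3} the functions $\norm{\dot u_n(\cdot)}_X$ are dominated by $g_n$, which converges in $L^p(0,T;\R)$ by \eqref{eq:condition_of_cor:convergence_in_W^1,p_2} and is therefore bounded; for $p > 1$ this makes $\{\dot u_n\}$ a bounded sequence in the reflexive space $L^p(0,T;X)$, while for $p = 1$ the domination by a convergent --- hence uniformly integrable --- sequence in $L^1$ makes $\{\dot u_n\}$ uniformly integrable, so relatively weakly compact in $L^1(0,T;X)$ by Dunford--Pettis (using that $X$ is reflexive). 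In both cases, testing the $L^\infty$-convergence $u_n \to u_0$ against smooth $X$-valued functions with compact support in $(0,T)$ identifies every weak accumulation point of $\{\dot u_n\}$ as $\dot u_0$, so the whole sequence converges, $\dot u_n \weakly \dot u_0$ in $L^p(0,T;X)$.

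Next I would upgrade this to strong convergence. From \eqref{eq:condition_of_cor:convergence_in_W^1,p_3} we have $\norm{\dot u_n}_{L^p(0,T;X)} \le \norm{g_n}_{L^p(0,T;\R)}$; by \eqref{eq:condition_of_cor:convergence_in_W^1,p_2} and \eqref{eq:condition_of_cor:convergence_in_W^1,p_4} the right-hand side tends to $\norm{g_0}_{L^p(0,T;\R)} = \norm{\dot u_0}_{L^p(0,T;X)}$, so $\limsup_n \norm{\dot u_n}_{L^p(0,T;X)} \le \norm{\dot u_0}_{L^p(0,T;X)}$, while weak lower semicontinuity of the norm supplies the reverse inequality; hence $\norm{\dot u_n}_{L^p(0,T;X)} \to \norm{\dot u_0}_{L^p(0,T;X)}$. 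For $1 < p < \infty$ the space $L^p(0,T;X)$ is uniformly convex (Clarkson-type inequalities, valid since $X$ is a Hilbert space), so weak convergence together with convergence of norms forces strong convergence; invoking \cite[Theorems~8.5 and 8.7]{Krejci1998} covers this simultaneously with the endpoint $p = 1$. Combining $\dot u_n \to \dot u_0$ with $u_n \to u_0$ in $L^p(0,T;X)$ and using \eqref{eq:norms_in_w1p} then completes the proof.

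The step I expect to be the main obstacle is precisely the strong-convergence upgrade at $p = 1$: there $L^1(0,T;X)$ is not uniformly convex and the soft Hilbert-space argument breaks down, so one genuinely relies on the measure-theoretic content of \cite[Theorem~8.5]{Krejci1998} --- a dominated convergence theorem in which the dominating functions are $n$-dependent and converge only in $L^1$ --- together with its $L^p$ extension \cite[Theorem~8.7]{Krejci1998}. A minor additional care point is the identification of the weak limit and, for $p = 1$, replacing the reflexivity argument for weak compactness by an equi-integrability (Dunford--Pettis) argument; both are routine once the pointwise domination is exploited.
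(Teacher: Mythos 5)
Your reduction to proving $\dot u_n \to \dot u_0$ in $L^p(0,T;X)$ is the same one the paper makes, but from there the two arguments diverge. The paper never establishes weak convergence of $\dot u_n$ in $L^p$: it only proves, via integration by parts and the $L^\infty$-convergence of $u_n$ together with a density argument, that $\int_0^T \dual{\dot u_n - \dot u_0}{\varphi}\,\d t \to 0$ for all $\varphi \in C(0,T;X)$ (this is Lemma~\ref{lem:weak_star_convergence_in_c_star}); this is a strictly weaker statement than weak convergence, but it is precisely the hypothesis of Krej\v{c}\'i's convergence theorem in $L^1$ (\cite[Theorem~8.7]{Krejci1998}), which together with the domination and saturation conditions immediately yields $\dot u_n \to \dot u_0$ in $L^1(0,T;X)$; a pointwise a.e.\ convergent subsequence and the generalized dominated convergence theorem \cite[Theorem~8.5]{Krejci1998} then upgrade this to $L^p$ for every $p \in [1,\infty)$ in one stroke. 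Your route for $p > 1$ --- domination gives $\limsup\norm{\dot u_n}_{L^p} \le \norm{g_0}_{L^p} = \norm{\dot u_0}_{L^p}$, weak lower semicontinuity gives the reverse, and then Radon--Riesz in the uniformly convex space $L^p(0,T;X)$ forces strong convergence --- is a legitimate and arguably softer alternative, and it covers the only instance the paper actually needs ($p=2$ in Theorem~\ref{thm:strong_convergence}). For $p = 1$ you correctly identify that this collapses, and your fallback is Krej\v{c}\'i's machinery; but note that your preliminary Dunford--Pettis argument for weak compactness in $L^1(0,T;X)$ is superfluous to that route, and in the Bochner setting it also requires a tightness condition beyond uniform integrability, so the paper's approach of producing only $C(0,T;X)'$-weak-$*$ convergence by integration by parts is both simpler and sufficient. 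In short: your proposal is correct for $p > 1$ by a genuinely different argument; for $p = 1$ you rely on the same tools as the paper but reach them via an unnecessary and slightly delicate detour.
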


\short{}{
	In order to prove \autoref{cor:convergence_in_W^1,p}, we show two auxiliary results.
	\begin{lemma}
		\label{lem:closure_of_weak_convergence}
		Let $X$ be a normed linear space.
		The sequence $\{x_n\}\subset X$ is assumed to be bounded in $X$ and let $V \subset X'$ be some subset of the dual space $X'$.
		If for some $x \in X$
		\begin{equation}
			\label{eq:weak_convergence}
			f(x_n) \to f(x) \quad \text{as } n \to \infty
		\end{equation}
		holds for all $f \in V$, then \eqref{eq:weak_convergence} holds for all $f \in \closure V$, where the closure is taken with respect to the $X'$-norm.
	\end{lemma}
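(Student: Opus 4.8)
The plan is to run a standard density (``three epsilon'') argument; the only substantive ingredient is the uniform boundedness of $\{x_n\}$. First I would set $M := \sup_{n} \norm{x_n}_X$, which is finite by hypothesis, fix an arbitrary $g \in \closure V$ together with some $\varepsilon > 0$, and --- since the closure is taken with respect to the $X'$-norm --- choose $f \in V$ with $\norm{g - f}_{X'} < \varepsilon$.

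Next I would split, for every $n \in \N$,
\begin{equation*}
	\abs{g(x_n) - g(x)} \le \abs{(g-f)(x_n)} + \abs{f(x_n) - f(x)} + \abs{(f-g)(x)} \le \varepsilon\,M + \abs{f(x_n) - f(x)} + \varepsilon\,\norm{x}_X,
\end{equation*}
where the outer estimate uses $\abs{(g-f)(y)} \le \norm{g-f}_{X'}\,\norm{y}_X$ applied with $y = x_n$ and with $y = x$. Passing to the limit $n \to \infty$ and invoking the hypothesis \eqref{eq:weak_convergence} for the fixed $f \in V$, the middle term tends to $0$, so $\limsup_{n\to\infty}\abs{g(x_n) - g(x)} \le \varepsilon\,(M + \norm{x}_X)$. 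Since $\varepsilon > 0$ is arbitrary, this forces $g(x_n) \to g(x)$, i.e.\ \eqref{eq:weak_convergence} holds for $g$; and since $g \in \closure V$ was arbitrary, the claim follows.

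I do not expect any genuine obstacle here. The one point worth stressing is that the boundedness of $\{x_n\}$ enters essentially, in order to control $\abs{(g-f)(x_n)}$ uniformly in $n$ --- which is precisely why it appears among the hypotheses. In the intended application $V$ will be (the linear span of) a family of functionals along which weak convergence is already available from the a priori estimates, and $\closure V$ will be large enough to test the weak limit actually needed in the proof of \autoref{cor:convergence_in_W^1,p}.
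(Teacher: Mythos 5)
Your proof is correct and takes essentially the same route as the paper's: a standard density (three-epsilon) argument, approximating $g\in\closure V$ by some $f\in V$ in the $X'$-norm and using the uniform bound on $\{x_n\}$ (and on $x$) to control the two error terms. The only cosmetic difference is that you phrase the conclusion via a $\limsup$, whereas the paper fixes an explicit $\varepsilon/3$-splitting; the substance is identical.
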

	\begin{proof}
		Let $f \in \closure V$ be given. Then there exists $\{ f_n \} \subset V$ with $f_n \to f$.
		Due to the boundedness of $\{x_n\}$, there is a $C > 0$ with $\norm{x_n}_{X} \le C$ and $\norm{x}_{X} \le C$.
		Let $\varepsilon > 0$ be given. Then there is a $m \in \N$ such that $\norm{f_m - f}_{X'} \le \varepsilon / ( 3 \, C )$.
		Since \eqref{eq:weak_convergence} holds for $f_m$, there exists $n \in \N$ such that $\abs{ f_m(x_n) - f_m(x) } \le \varepsilon / 3$.
		This implies
		\begin{align*}
			\abs{f(x_n) - f(x)}
			& \le \abs{f(x_n) - f_m(x_n)} + \abs{f_m(x_n)-f_m(x)} + \abs{f_m(x)-f(x)} \\
			& \le \norm{f-f_m}_{X'}\,\norm{x_n}_X + \abs{f_m(x_n)-f_m(x)} + \norm{f-f_m}_{X'}\,\norm{x}_X
			\le \varepsilon.
		% & \le \varepsilon \, ( c + \norm{x}_X ) + \abs{f_m(x_n)-f_m(x)} \to 0.
		\end{align*}
		This proves that \eqref{eq:weak_convergence} holds for $f$.
	\end{proof}

	\begin{lemma}
		\label{lem:weak_star_convergence_in_c_star}
		Let $X$ be some Hilbert space and let $\{u_n\} \subset W^{1,1}(0, T; X)$ and $u \in W^{1,1}(0, T; X)$ be given.
		If $u_n \to u$ in $L^\infty(0, T; X)$ and if $\{u_n\}$ is bounded in $W^{1,1}(0, T; X)$, then
		\begin{equation}
			\label{eq:weak_star_convergence_in_c_star}
			\int_0^T \dual{ \dot u_n(t) }{ \varphi(t) } \, \d t \to 
			\int_0^T \dual{ \dot u  (t) }{ \varphi(t) } \, \d t
			\quad\text{for all } \varphi \in C(0, T; X),
		\end{equation}
		i.e.\ the sequence $\{\dot u_n\}$ converges in $C(0, T; X)'$ with respect to the weak-$*$-topology.
	\end{lemma}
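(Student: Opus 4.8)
The plan is to reduce \eqref{eq:weak_star_convergence_in_c_star} to a uniformly dense class of test functions $\varphi$ by means of \autoref{lem:closure_of_weak_convergence}, and to verify it there by an elementary integration by parts.

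First I would observe that, by assumption, $\{\dot u_n\}$ is bounded in $L^1(0,T;X)$, and that $L^1(0,T;X)$ embeds continuously into $C(0,T;X)'$ through $v \mapsto \big(\varphi \mapsto \int_0^T \dual{v(t)}{\varphi(t)} \, \d t\big)$. Hence, regarded as functionals on $C(0,T;X)$, the $\dot u_n$ form a bounded sequence in $C(0,T;X)'$. I then apply \autoref{lem:closure_of_weak_convergence} with the ambient normed space being $C(0,T;X)'$, the bounded sequence being $\{\dot u_n\}$, and $V$ a subset of the predual $C(0,T;X)$, viewed as a subspace of $C(0,T;X)''$ through the canonical embedding. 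Because this embedding is isometric, density of $V$ in $C(0,T;X)$ for the uniform norm is the same as density of $V$ in the $C(0,T;X)''$-norm. Consequently it suffices to establish \eqref{eq:weak_star_convergence_in_c_star} for all $\varphi$ in some uniformly dense subset of $C(0,T;X)$.

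Second, I take this subset to be $C^1(0,T;X)$, which is dense in $C(0,T;X)$ for the uniform norm (for instance by mollification in time, or by vector-valued Weierstrass approximation). For $\varphi \in C^1(0,T;X)$ the scalar function $t \mapsto \dual{u_n(t)}{\varphi(t)}$ belongs to $W^{1,1}(0,T;\R)$, so the fundamental theorem of calculus together with the product rule yields
\begin{equation*}
	\int_0^T \dual{\dot u_n(t)}{\varphi(t)} \, \d t
	= \dual{u_n(T)}{\varphi(T)} - \dual{u_n(0)}{\varphi(0)}
	- \int_0^T \dual{u_n(t)}{\dot\varphi(t)} \, \d t,
\end{equation*}
and the analogous identity holds with $u$ in place of $u_n$. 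Using the absolutely continuous representatives, $u_n \to u$ in $L^\infty(0,T;X)$ amounts to uniform convergence on $[0,T]$; in particular $u_n(0) \to u(0)$ and $u_n(T) \to u(T)$ in $X$, and $\int_0^T \dual{u_n(t)}{\dot\varphi(t)} \, \d t \to \int_0^T \dual{u(t)}{\dot\varphi(t)} \, \d t$. Passing to the limit in the displayed identity gives \eqref{eq:weak_star_convergence_in_c_star} for every $\varphi \in C^1(0,T;X)$, and \autoref{lem:closure_of_weak_convergence} then extends it to all $\varphi \in C(0,T;X)$, which is the assertion.

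The integration by parts and the three elementary limit passages are routine; the only step that needs a little care is the functional-analytic bookkeeping — applying \autoref{lem:closure_of_weak_convergence} with $C(0,T;X)'$ as the ambient space and the test functions sitting in the predual $C(0,T;X) \embeds C(0,T;X)''$, and using that this embedding is isometric, so that $C^1$-density in the uniform norm is precisely what the reduction requires.
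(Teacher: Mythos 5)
Your proof is correct and follows essentially the same route as the paper's: reduce to $\varphi \in C^1(0,T;X)$ by uniform density and \autoref{lem:closure_of_weak_convergence}, then integrate by parts and pass to the limit using the $L^\infty$-convergence of $u_n$. The extra functional-analytic bookkeeping you spell out (ambient space $C(0,T;X)'$, test functions in the predual embedded isometrically into $C(0,T;X)''$) is exactly what the paper's terse citation of \autoref{lem:closure_of_weak_convergence} implicitly relies on, so nothing is missing or different in substance.
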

	\begin{proof}
		Let us prove that \eqref{eq:weak_star_convergence_in_c_star} holds for all $\varphi \in C^1(0, T; X)$.
		Due to $W^{1,1}(0, T; X) \subset C(0, T; X)$, integrating by parts (see \cite[(8.24)]{Krejci1998}) and the convergence of $u_n$ in $L^\infty(0, T; X)$ yields
	% {\allowdisplaybreaks%
		\begin{align*}
			\int_0^T \dual{ \dot u_n(t) }{      \varphi(t) } \, \d t
			& =  -\int_0^T \dual{      u_n(t) }{ \dot \varphi(t) } \, \d t + \dual{ u_n(T) }{ \varphi(T) } - \dual{ u_n(0) }{ \varphi(0) } \\
			&\to -\int_0^T \dual{      u  (t) }{ \dot \varphi(t) } \, \d t + \dual{ u  (T) }{ \varphi(T) } - \dual{ u  (0) }{ \varphi(0) } \\
			& =   \int_0^T \dual{ \dot u  (t) }{      \varphi(t) } \, \d t
	\end{align*}%}
	for all $\varphi \in C^1(0, T; X)$.
	Using $\cl C^1(0, T; X) = C(0, T; X)$ with respect to the $\norm{\cdot}_{L^\infty(0, T; X)}$-norm, \autoref{lem:closure_of_weak_convergence} yields the claim.
\end{proof}
We remark that \autoref{lem:weak_star_convergence_in_c_star} follows directly from \cite[Theorem~8.16]{Krejci1998}, with the settings
\begin{align*}
	u_n := u := \varphi, \quad
	\xi_n := v_n, \quad
	\xi := v,
\end{align*}
since $\operatorname{Var}_{[0,T]} v_n \le c$ holds due to the boundedness of $v_n$ in $W^{1,1}(0, T; X)$.
The direct proof is presented for convenience of the reader.

\begin{theorem}[\protect{\cite[Theorem~8.7]{Krejci1998}}]
	\label{thm:convergence_in_L^1}
	Let $X$ be a Hilbert space and $\{ v_n \} \subset L^1(0, T; X)$, $\{ g_n \} \subset L^1(0, T; \R)$ be given sequences for $n \in \N \cup \{0\}$ such that
	\begin{subequations}
		\label{eq:condition_of_thm:convergence_in_L^1}
		\begin{align}
			% Very dirty hack:
			\vphantom{a^{a^{a^{a^{a^{a}}}}}}\smash{\lim_{n\to\infty} \int_0^T} \dual{ v_n(t) - v_0(t) }{ \varphi(t)} \, \d t &= \mrep{0}{g_n(t)} \text{ for all } \varphi \in C(0, T; X),
			\label{eq:condition_of_thm:convergence_in_L^1_1} \\
			g_n &\mrep{{}\to g_0}{{}\le g_n(t)} \text{ in } L^1(0, T; \R),
			\label{eq:condition_of_thm:convergence_in_L^1_2} \\
			\norm{ v_n(t) }_X &\le g_n(t) \text{ a.e.\ in } (0,T), \text{ for all } n \in \N,
			\label{eq:condition_of_thm:convergence_in_L^1_3} \\
			\norm{ v_0(t) }_X &= \mrep{g_0(t)}{g_n(t)} \text{ a.e.\ in } (0,T).
			\label{eq:condition_of_thm:convergence_in_L^1_4}
		\end{align}
	\end{subequations}
	Then $v_n \to v_0$ in $L^1(0, T; X)$.
\end{theorem}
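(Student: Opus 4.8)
The plan is to prove the (stronger) claim $\norm{v_n - v_0}_{L^1(0,T;X)} \to 0$ directly, via a pointwise Radon--Riesz/Kadec--Klee--type inequality that uses the Hilbert structure of $X$. The key observation is that, by \eqref{eq:condition_of_thm:convergence_in_L^1_4}, the vector $v_0(t)$ concentrates the whole ``mass'' $g_0(t)$ in a single direction, so decomposing $v_n(t)$ into its component along $v_0(t)$ and an orthogonal remainder will bound $\norm{v_n(t)-v_0(t)}_X$ by quantities controlled through \eqref{eq:condition_of_thm:convergence_in_L^1_1}--\eqref{eq:condition_of_thm:convergence_in_L^1_4}. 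As a preliminary remark, since $\{g_n\}$ converges in $L^1(0,T;\R)$ it is uniformly integrable, hence so is $\{\norm{v_n(\cdot)}_X\}$ by \eqref{eq:condition_of_thm:convergence_in_L^1_3}, and in particular $\{v_n\}$ is bounded in $L^1(0,T;X)$.

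\emph{Step 1 (upgrading the test-function class).} First I would show that $\int_0^T \dual{v_n(t)-v_0(t)}{\psi(t)}\,\d t \to 0$ holds not only for $\psi\in C(0,T;X)$, but for every $\psi\in L^\infty(0,T;X)$. Given such a $\psi$ and $\eta>0$, Lusin's theorem together with a norm-preserving continuous extension provides $\psi_\eta\in C(0,T;X)$ with $\norm{\psi_\eta}_{L^\infty(0,T;X)}\le\norm{\psi}_{L^\infty(0,T;X)}$ and $\psi_\eta=\psi$ off a set $A_\eta$ of measure $<\eta$. The part over $A_\eta$ is bounded by $\norm{\psi}_{L^\infty(0,T;X)}\int_{A_\eta}(g_n+g_0)\,\d t$, which tends to $0$ as $\eta\searrow0$ uniformly in $n$ by uniform integrability, while the remaining part vanishes as $n\to\infty$ by \eqref{eq:condition_of_thm:convergence_in_L^1_1}.

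\emph{Step 2 (pointwise estimate).} I would set $\psi(t):=v_0(t)/g_0(t)$ where $g_0(t)>0$ and $\psi(t):=0$ otherwise, so $\norm{\psi(t)}_X\le1$, and put $\gamma_n(t):=\dual{v_n(t)}{\psi(t)}$. On $\{g_0>0\}$, writing $v_n=\gamma_n\,\psi+r_n$ with $r_n\perp\psi$ gives $\norm{r_n}_X^2=\norm{v_n}_X^2-\gamma_n^2$ and
\begin{equation*}
	\norm{v_n(t)-v_0(t)}_X
	\le \abs{\gamma_n(t)-g_0(t)} + \norm{r_n(t)}_X
	\le \big(g_n(t)-\gamma_n(t)\big) + \abs{g_n(t)-g_0(t)} + \sqrt{2\,g_n(t)}\,\sqrt{g_n(t)-\gamma_n(t)},
\end{equation*}
using $\gamma_n\le\norm{v_n}_X\le g_n$ (hence $g_n-\gamma_n\ge0$), $\abs{\gamma_n-g_0}\le(g_n-\gamma_n)+\abs{g_n-g_0}$, and $\norm{r_n}_X^2=(\norm{v_n}_X-\gamma_n)(\norm{v_n}_X+\gamma_n)\le(g_n-\gamma_n)\,2\,g_n$. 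On $\{g_0=0\}$ one has $v_0=0$ by \eqref{eq:condition_of_thm:convergence_in_L^1_4}, and the same bound holds trivially with $\gamma_n:=0$.

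\emph{Step 3 (integration).} Integrating over $(0,T)$: the first term equals $\int_0^T g_n\,\d t-\int_0^T\dual{v_n}{\psi}\,\d t$, which by \eqref{eq:condition_of_thm:convergence_in_L^1_2}, Step 1, and $\dual{v_0(t)}{\psi(t)}=g_0(t)$ a.e.\ converges to $\int_0^T g_0-\int_0^T g_0=0$ (the integrand being nonnegative); the second term is $\norm{g_n-g_0}_{L^1(0,T;\R)}\to0$ by \eqref{eq:condition_of_thm:convergence_in_L^1_2}; and, by Cauchy--Schwarz, the third term is at most $\sqrt2\,\big(\int_0^T g_n\,\d t\big)^{1/2}\big(\int_0^T(g_n-\gamma_n)\,\d t\big)^{1/2}$, hence also tends to $0$ since $\int_0^T g_n\,\d t$ stays bounded. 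This yields $v_n\to v_0$ in $L^1(0,T;X)$. I expect Step 1 to be the main obstacle: hypothesis \eqref{eq:condition_of_thm:convergence_in_L^1_1} only gives convergence against continuous test functions, whereas the geometric argument forces us to test against the merely measurable direction field $\psi$, and bridging this gap is exactly where the uniform integrability provided by \eqref{eq:condition_of_thm:convergence_in_L^1_2} and \eqref{eq:condition_of_thm:convergence_in_L^1_3} is indispensable.
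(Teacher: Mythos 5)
The paper does not prove this theorem itself; it is quoted as \cite[Theorem~8.7]{Krejci1998} and used as a black box, so there is no in-paper argument to compare against. Your self-contained proof is correct, and it is a clean Radon--Riesz-type argument exploiting the strict convexity of the Hilbert norm via the orthogonal decomposition of $v_n(t)$ along the unit direction $\psi(t)=v_0(t)/g_0(t)$.

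A few small remarks on rigor, none of which affect the validity. In Step~1 the vector-valued Lusin step needs the fact that $\psi$ is Bochner measurable, hence a.e.\ essentially separably valued, and the norm-preserving continuous extension off the Lusin compact is a Dugundji-type extension into the closed ball $\bar B_{\norm{\psi}_{L^\infty(0,T;X)}}(0)$ (a convex set); also the error term should carry a factor $2\,\norm{\psi}_{L^\infty(0,T;X)}$ since $\norm{\psi-\psi_\eta}_X\le 2\,\norm{\psi}_{L^\infty(0,T;X)}$. Both facts you rely on in Step~2 are correct: on $\{g_0>0\}$ one has $\norm{\psi(t)}_X=1$, $\gamma_n(t)\le\norm{v_n(t)}_X\le g_n(t)$, $\norm{v_n(t)-v_0(t)}_X^2=(\gamma_n-g_0)^2+\norm{r_n}_X^2$, and $\norm{r_n}_X^2=(\norm{v_n}_X-\gamma_n)(\norm{v_n}_X+\gamma_n)\le 2\,g_n\,(g_n-\gamma_n)$, while on $\{g_0=0\}$ one indeed gets $\psi=0$ hence $\gamma_n=0$ and $v_0=0$. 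In Step~3 the crucial observation is that $g_n-\gamma_n\ge0$ a.e.\ and $\int_0^T(g_n-\gamma_n)\,\d t\to 0$, so $g_n-\gamma_n\to 0$ in $L^1(0,T;\R)$, after which $\norm{g_n-g_0}_{L^1(0,T;\R)}\to0$ and the Cauchy--Schwarz bound $\int_0^T\sqrt{2g_n}\sqrt{g_n-\gamma_n}\,\d t\le\sqrt{2}\,\big(\int_0^T g_n\,\d t\big)^{1/2}\big(\int_0^T(g_n-\gamma_n)\,\d t\big)^{1/2}\to0$ close the argument.
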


\begin{theorem}[\protect{\cite[Theorem~8.5]{Krejci1998}}]
	\label{thm:lebesgue_dominated_convergence}
	Let $B$ be a Banach space and $p \in [1,\infty)$ be given. Let $\{v_n\} \subset L^p(0, T; B)$, $\{g_n\} \subset L^p(0, T; \R)$ be given sequences for $n \in \N \cup \{0\}$ such that
	\begin{subequations}
		\label{eq:condition_of_thm:lebesgue_dominated_convergence}
		\begin{align}
			v_n(t) &\to v_0(t) \text{ a.e.\ in } (0,T),
			\label{eq:condition_of_thm:lebesgue_dominated_convergence2} \\
			g_n &\to \mrep{g_0}{v_0(t)} \text{ in } L^p(0, T; \R),
			\label{eq:condition_of_thm:lebesgue_dominated_convergence1} \\
			\norm{ v_n(t) }_B &\mrep{{}\le g_n(t)}{{}\to v_0(t)} \text{ a.e.\ in } (0,T), \text{ for all } n \in \N \cup \{0\},
			\label{eq:condition_of_thm:lebesgue_dominated_convergence3}
		\end{align}
	\end{subequations}
	Then $v_n \to v_0$ in $L^p(0, T; B)$.
\end{theorem}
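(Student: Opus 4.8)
The plan is to reduce the statement to a scalar generalized dominated convergence theorem by passing to $p$-th powers of norms. Set $f_n(t) := \norm{v_n(t) - v_0(t)}_B^p$ for $n \in \N$. Since $v_n$ and $v_0$ are Bochner measurable, $t \mapsto \norm{v_n(t)-v_0(t)}_B$ is a nonnegative measurable real-valued function, so each $f_n$ is measurable and integrable once we have the domination below; by \eqref{eq:condition_of_thm:lebesgue_dominated_convergence2} we have $f_n(t) \to 0$ for almost all $t \in (0,T)$. Proving $\int_0^T f_n(t) \, \d t \to 0$ is exactly the assertion $v_n \to v_0$ in $L^p(0,T;B)$.

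First I would establish a domination. By the triangle inequality together with \eqref{eq:condition_of_thm:lebesgue_dominated_convergence3} applied to the indices $n$ and $0$, we obtain $\norm{v_n(t) - v_0(t)}_B \le g_n(t) + g_0(t)$ a.e., hence
\begin{equation*}
	f_n(t) \le \big( g_n(t) + g_0(t) \big)^p \le 2^{p-1} \big( g_n(t)^p + g_0(t)^p \big) =: G_n(t)
	\quad \text{a.e.\ in } (0,T).
\end{equation*}
Next I would show $G_n \to G := 2^p \, g_0^p$ in $L^1(0,T;\R)$. This follows from \eqref{eq:condition_of_thm:lebesgue_dominated_convergence1}: using the elementary inequality $\abs{a^p - b^p} \le p \, (a+b)^{p-1} \, \abs{a-b}$ for $a,b \ge 0$, together with Hölder's inequality with exponents $p/(p-1)$ and $p$ and the boundedness of $\{g_n\}$ in $L^p(0,T;\R)$, one gets $\norm{g_n^p - g_0^p}_{L^1(0,T;\R)} \le p \, \big( \norm{g_n}_{L^p(0,T;\R)} + \norm{g_0}_{L^p(0,T;\R)} \big)^{p-1} \, \norm{g_n - g_0}_{L^p(0,T;\R)} \to 0$; the case $p=1$ is immediate. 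In particular $\int_0^T G_n \, \d t \to \int_0^T G \, \d t < \infty$.

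Finally I would conclude by a Fatou argument combined with a subsequence device. It suffices to show that every subsequence of $\{v_n\}$ has a further subsequence converging to $v_0$ in $L^p(0,T;B)$. Given such a subsequence, since $g_n \to g_0$ in $L^p(0,T;\R)$ I extract a further subsequence (not relabeled) along which $g_n(t) \to g_0(t)$ for almost all $t$, so $G_n(t) \to G(t)$ a.e. The functions $G_n - f_n$ are nonnegative and $G_n - f_n \to G$ a.e.; Fatou's lemma and $\int_0^T G_n \, \d t \to \int_0^T G \, \d t$ yield
\begin{equation*}
	\int_0^T G \, \d t \le \liminf_{n} \int_0^T (G_n - f_n) \, \d t = \int_0^T G \, \d t - \limsup_{n} \int_0^T f_n \, \d t ,
\end{equation*}
whence $\limsup_n \int_0^T f_n \, \d t \le 0$ and therefore $\int_0^T f_n \, \d t \to 0$ along this sub-subsequence. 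Since a subsequence of every subsequence has this property, the full sequence satisfies $\int_0^T f_n \, \d t \to 0$, i.e.\ $v_n \to v_0$ in $L^p(0,T;B)$.

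The main obstacle is the familiar technical point that $L^p$-convergence of $\{g_n\}$ provides pointwise a.e.\ convergence of the dominating functions only along a subsequence, so Fatou's lemma cannot be applied directly to the original sequence; this is the reason for the "subsequence of every subsequence" argument. A secondary point demanding care is the continuity of $f \mapsto \abs{f}^p$ from $L^p(0,T;\R)$ into $L^1(0,T;\R)$, which is what furnishes the $L^1$-convergence $G_n \to G$ and thereby replaces the single fixed dominating function of the classical Lebesgue dominated convergence theorem.
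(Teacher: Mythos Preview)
Your proof is correct. Note, however, that the paper does not supply its own proof of this statement: it is simply quoted from \cite[Theorem~8.5]{Krejci1998} and used as a black box in the proof of \autoref{cor:convergence_in_L^p}. Your argument is the standard reduction to the scalar generalized dominated convergence theorem (sometimes attributed to Pratt) via $f_n = \norm{v_n - v_0}_B^p$ and the varying majorants $G_n = 2^{p-1}(g_n^p + g_0^p)$, followed by a Fatou-plus-subsequence device; this is precisely how such results are typically established, so there is nothing to compare beyond observing that you have filled in a proof the paper chose to cite.
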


\begin{corollary}
	\label{cor:convergence_in_L^p}
	Let $X$ be a Hilbert space and $p \in [1,\infty)$ be given. Let $\{v_n\} \subset L^p(0, T; X)$, $\{g_n\} \subset L^p(0, T; \R)$ be given sequences for $n \in \N \cup \{0\}$ such that
	\begin{subequations}
		\label{eq:condition_of_cor:convergence_in_L^p}
		\begin{align}
			% Very dirty hack:
			\vphantom{a^{a^{a^{a^{a^{a}}}}}}\smash{\lim_{n\to\infty} \int_0^T} \dual{ v_n(t) - v_0(t) }{ \varphi(t)} \, \d t &= \mrep{0}{g_n(t)} \text{ for all } \varphi \in C(0, T; X),
			\label{eq:condition_of_cor:convergence_in_L^p_1} \\
			g_n &\mrep{{}\to g_0}{{}\le g_n(t)} \text{ in } L^p(0, T; \R),
			\label{eq:condition_of_cor:convergence_in_L^p_2} \\
			\norm{ v_n(t) }_X &\le g_n(t) \text{ a.e.\ in } (0,T), \text{ for all } n \in \N,
			\label{eq:condition_of_cor:convergence_in_L^p_3} \\
			\norm{ v_0(t) }_X &= \mrep{g_0(t)}{g_n(t)} \text{ a.e.\ in } (0,T).
			\label{eq:condition_of_cor:convergence_in_L^p_4}
		\end{align}
	\end{subequations}
	Then $v_n \to v_0$ in $L^p(0, T; X)$.
\end{corollary}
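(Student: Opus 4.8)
The plan is to reduce the general statement with $p\in[1,\infty)$ to the case $p=1$ — which is precisely \autoref{thm:convergence_in_L^1} — and then to upgrade the mode of convergence to $L^p(0,T;X)$ by means of the Bochner dominated convergence theorem \autoref{thm:lebesgue_dominated_convergence}, using a routine subsequence argument to transfer the conclusion back to the whole sequence.

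First I would check that the hypotheses \eqref{eq:condition_of_cor:convergence_in_L^p_1}--\eqref{eq:condition_of_cor:convergence_in_L^p_4} imply those of \autoref{thm:convergence_in_L^1}. Since $(0,T)$ has finite measure, $g_n\to g_0$ in $L^p(0,T;\R)$ implies $g_n\to g_0$ in $L^1(0,T;\R)$, so \eqref{eq:condition_of_thm:convergence_in_L^1_2} holds; condition \eqref{eq:condition_of_thm:convergence_in_L^1_1} is identical to \eqref{eq:condition_of_cor:convergence_in_L^p_1}, and \eqref{eq:condition_of_thm:convergence_in_L^1_3}--\eqref{eq:condition_of_thm:convergence_in_L^1_4} coincide with \eqref{eq:condition_of_cor:convergence_in_L^p_3}--\eqref{eq:condition_of_cor:convergence_in_L^p_4}. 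Thus \autoref{thm:convergence_in_L^1} yields $v_n\to v_0$ in $L^1(0,T;X)$.

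Next, let $\{v_{n_k}\}$ be an arbitrary subsequence. From the $L^1(0,T;X)$-convergence just obtained and from $g_n\to g_0$ in $L^p(0,T;\R)$, I would pass to a further subsequence (not relabelled) along which $v_{n_k}(t)\to v_0(t)$ in $X$ for almost all $t$ and $g_{n_k}(t)\to g_0(t)$ for almost all $t$. The pointwise bound \eqref{eq:condition_of_cor:convergence_in_L^p_3} gives $\norm{v_{n_k}(t)}_X\le g_{n_k}(t)$ a.e., while \eqref{eq:condition_of_cor:convergence_in_L^p_4} gives $\norm{v_0(t)}_X = g_0(t)\le g_0(t)$ a.e., so the domination hypothesis \eqref{eq:condition_of_thm:lebesgue_dominated_convergence3} is satisfied for all indices, including $n=0$. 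Together with $g_{n_k}\to g_0$ in $L^p(0,T;\R)$, \autoref{thm:lebesgue_dominated_convergence} (applied with the Banach space $B=X$) yields $v_{n_k}\to v_0$ in $L^p(0,T;X)$. A standard subsequence-of-every-subsequence argument then upgrades this to convergence of the full sequence: if $v_n\not\to v_0$ in $L^p(0,T;X)$, some subsequence would stay at $L^p$-distance at least $\varepsilon>0$ from $v_0$, contradicting that it admits a sub-subsequence converging to $v_0$ in $L^p$. Hence $v_n\to v_0$ in $L^p(0,T;X)$.

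I do not expect a genuine obstacle here; the only points requiring a moment of care are the harmless degradation of the $L^p$ bound on $g_n$ to an $L^1$ bound in the first reduction, and verifying that the domination hypothesis of \autoref{thm:lebesgue_dominated_convergence} is imposed also for $n=0$, which is immediate from \eqref{eq:condition_of_cor:convergence_in_L^p_4}. Everything else is bookkeeping with subsequences, and the corollary is essentially a short deduction from the two quoted theorems of Krejčí.
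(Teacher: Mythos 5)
Your proof is correct and follows essentially the same route as the paper's: apply \autoref{thm:convergence_in_L^1} to get $L^1$-convergence, extract a pointwise a.e.\ convergent subsequence, apply the dominated convergence result \autoref{thm:lebesgue_dominated_convergence}, and conclude for the whole sequence via the standard subsequence-of-every-subsequence argument. Your write-up is slightly more meticulous than the paper's in spelling out the verification of the hypotheses (the harmless $L^p\to L^1$ downgrade for $g_n$, the further subsequence to get $g_{n_k}(t)\to g_0(t)$ a.e., and the domination for $n=0$), but the underlying argument is identical.
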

\begin{proof}
	Applying \autoref{thm:convergence_in_L^1} yields $v_n \to v$ in $L^1(0, T; X)$.
	Thus there exists a pointwise convergent subsequence $\{ v_{n_k} \}$.
	Hence, \autoref{thm:lebesgue_dominated_convergence} yields $v_{n_k} \to v$ in $L^p(0, T; X)$.
	Since we can start with an arbitrary subsequence of $v_n$, we obtain $v_n \to v$ in $L^p(0, T; X)$.
\end{proof}

\begin{proof}[Proof of \autoref{cor:convergence_in_W^1,p}]
	By \eqref{eq:condition_of_cor:convergence_in_W^1,p_2} and \eqref{eq:condition_of_cor:convergence_in_W^1,p_3} we obtain the boundedness of $u_n$ in $W^{1,1}(0, T; X)$.
	\autoref{lem:weak_star_convergence_in_c_star} yields \eqref{eq:weak_star_convergence_in_c_star} and hence we can apply
	\autoref{cor:convergence_in_L^p} for $v_n = \dot u_n$.
	This yields $\dot u_n \to \dot u_0$ in $L^p(0, T; X)$ and hence $u_n \to u_0$ in $W^{1,p}(0, T; X)$.
\end{proof}
}

%%fakesection: Acknowledgement
\subsection*{Acknowledgment}
\pdfbookmark[1]{Acknowledgement}{toc}
The author would like to express his gratitude to Roland Herzog and Christian Meyer
for helpful discussions on the topic of this paper
and to Dorothee Knees for pointing out reference \cite{Krejci1998}.

\ifSIAM
This work was supported by a DFG grant within the \href{http://www.am.uni-erlangen.de/home/spp1253/wiki}{Priority Program SPP~1253} (\emph{Optimization with Partial Differential Equations}), which is gratefully acknowledged.
\else
%------------------------------------------------------------------
% \subsection*{Acknowledgment}
%------------------------------------------------------------------
This work was supported by a DFG grant within the \href{http://www.am.uni-erlangen.de/home/spp1253/wiki}{Priority Program SPP~1253} (\emph{Optimization with Partial Differential Equations}), which is gratefully acknowledged.

\fi

%%fakesection: Bibliography

\pdfbookmark[1]{\bibname}{toc}
\ifSIAM
\bibliographystyle{siam}
\bibliography{Optimal_Control_of_Quasistatic_Plasticity_Part_I}
\else
\bibliographystyle{plainnat}
\bibliography{Plasticity}
\fi

\end{document}